\theoremstyle{plain}
\newtheorem{theorem}{Theorem}[section]
\newtheorem*{theorem*}{Theorem}
\newtheorem*{maintheorem-intro}{Main Theorem}
\newtheorem*{maintheorem-intro-2}{Theorem~\ref{Bridge number and genus}}
\newtheorem*{theorem-cablingconj}{Theorem~\ref{apps1} (1)}
\newtheorem*{theorem-toroidal}{Specialization of Theorem~\ref{apps1} (2)}
\newtheorem*{theorem-lens}{Theorem~\ref{Bounding distance - special}(1)}
\newtheorem*{theorem-SFS}{Theorem~\ref{Bounding distance - special}(2)}
\newtheorem*{theorem-cosmetic}{Theorem}
\newtheorem*{theorem-bridge}{Specialization of Corollary~\ref{Cor: exceptional bridge}}
\newtheorem*{theorem-Heeggenus}{Corollary~\ref{Cor: exceptional bridge} (2)}
\newtheorem{corollary}[theorem]{Corollary}
\newtheorem{lemma}[theorem]{Lemma}
\theoremstyle{definition}
\newtheorem{remark}[theorem]{Remark}
\newtheorem{definition}[theorem]{Definition}
\theoremstyle{definition}
\newcommand{\R}{{\mathbb R}}
\newcommand{\Z}{\mathbb Z}
\newcommand{\N}{\mathbb N}
\newcommand{\up}{\uparrow}
\newcommand{\dn}{\downarrow}
\newcommand{\nil}{\varnothing}
\newcommand{\wihat}{\widehat}
\newcommand{\defn}[1]{\emph{#1}}
\newcommand{\bdd}{\partial}
\newcommand{\boundary}{\partial}
\newcommand{\mc}[1]{\mathcal{#1}}
\newcommand{\ob}[1]{\overline{#1}}
\newcommand{\inter}[1]{\mathring{#1}}
\newcommand{\goodCurves}{\mathbb{P}_\mc{C}}
\newcommand{\goodArcs}{\mathbb{P}_{\mc{AC}}}
\newcommand{\co}{\mskip0.5mu\colon\thinspace}
\begin{document}

   \title[Exceptional Surgeries]{Exceptional and cosmetic surgeries on knots}
   \author{Ryan Blair}
   \email{ryan.blair@csulb.edu}
   \author{Marion Campisi}
   \email{marion.campisi@sjsu.edu}
   \author{Jesse Johnson}
   \email{jjohnson@math.okstate.edu}
   \author{Scott A. Taylor}
   \email{sataylor@colby.edu}
   \author{Maggy Tomova}
   \email{maggy-tomova@uiowa.edu}


\begin{abstract}
We show that the bridge distance of a knot determines a lower bound on the genera of essential surfaces and Heegaard surfaces in the manifolds that result from non-trivial Dehn surgeries on the knot. In particular, knots with high bridge distance do not admit non-trivial non-hyperbolic surgeries or non-trivial cosmetic surgeries. We further show that if a knot has bridge distance at least 3 then its bridge number is bounded above by a function of Seifert genus, or indeed by the genus of (almost) any essential surface or Heegaard surface in the surgered manifold.
\end{abstract}
\maketitle
\date{\today}

\section{Introduction}\label{First Intro}

We begin with a very brief summary of some of the highlights in the paper for easy reference. In the next section we explain the origin and significance of these results in greater detail.

The main goal of this paper is to show that the genera of essential surfaces and Heegaard surfaces in a manifold obtained by Dehn surgery on a knot $L$ are bounded below by a certain function of the bridge distance of $L$.  The bridge distance of a knot $L$ in a 3-manifold $M$, like the bridge number $b(L)$, is a non-negative integer invariant of knots in 3-manifolds. In fact, there are two slightly different measures of bridge distance, $d_\mc{C}$ and $d_\mc{AC}$; we consider both of them.

The lower bound provided by bridge distance shows that a knot with high bridge distance does not admit exceptional or cosmetic surgeries. In particular, we make significant progress towards solving the Cabling Conjecture by showing that if $L \subset S^3$ has a reducing surgery then $d_\mc{C}(L) \leq 2$ (assuming $b(L) \geq 6$). In \cite{Distance2}, we give a characterization of knots $L\subset S^3$ with $d_\mc{C}(L) = 2$. Combined with the fact that if a knot $L \subset S^3$ has $d_\mc{C}(L) = 1$ then it has an essential tangle decomposition \cite{HS01}, this gives a detailed description of any potential counterexample to the conjecture. On a related note, we show that the Berge Conjecture (concerning knots with lens space surgeries) is true for all knots $L \subset S^3$ with $d_\mc{C}(L) \geq 5$. The following theorem provides a summary of these and related results.

\begin{maintheorem-intro}\label{MainThm-intro}
Let $M$ be a closed, orientable, irreducible manifold and suppose that $L \subset M$ is a knot whose exterior is irreducible and $\boundary$-irreducible. Then:
\begin{enumerate}
\item\label{case:reducible} If $M = S^3$, $b(L) \geq 6$, and $d_\mc{C}(L) \geq 3$, then $L$ does not admit a reducible surgery.\\
\item\label{case:toroidal}  If $M = S^3$, $b(L) \geq 7$, and $d_\mc{C}(L) \geq 3$, then  $L$ does not admit a toroidal surgery.\\
\item\label{case:lens space} If $M = S^3$ and $d_\mc{C}(L) \geq 5$, then $L$ does not admit a lens space surgery. \\
\item\label{case:SFS}  If $M = S^3$ and $d_\mc{C}(L) \geq 7$, then $L$ does not admit a small Seifert fibered space surgery.\\
\item\label{case:non-hyperbolic}  If $M$ is hyperbolic and if $L$ is a knot with $d_\mc{C}(L) \geq 13$, then every non-trivial surgery on $L$ produces a hyperbolic 3-manifold.\\
\item\label{case:cosmetic} If $M$ has Heegaard genus $g \geq 1$, and $d_\mc{C}(L) \geq 4g + 5$, then every surgery on $L$ produces a 3-manifold of Heegaard genus at least $g+1$. \\
\item\label{case:S^3} If $d_\mc{C}(L) \geq 7$, then $L$ does not admit a non-trivial surgery producing $S^3$.
\end{enumerate}
\end{maintheorem-intro}
\begin{remark}
Conclusions \eqref{case:reducible} and \eqref{case:toroidal} can be found in Theorem~\ref{apps1}. Conclusions \eqref{case:lens space} - \eqref{case:cosmetic} are contained in Theorem~\ref{Bounding distance - special} below. Conclusion (\ref{case:S^3}) can be found in Corollary \ref{Cor: S^3 surgeries}.
\end{remark}

In fact, it turns out that there is a surprising relationship between bridge number, bridge distance, and the genera of essential surfaces in the knot exterior. Using this relationship, we also exhibit a startling connection between the Seifert genus and bridge number of a knot of high bridge distance:
\begin{maintheorem-intro-2}
Suppose that $L$ is a knot in a homology sphere with irreducible and $\boundary$-irreducible exterior.  If $d_\mc{C}(L) \geq 3$, then
\[
b(L) \leq 4g(L)+2
\]
where $g(L)$ is the Seifert genus of $L$.
\end{maintheorem-intro-2}

\section{Background and results}
Every link $L$ in a 3-manifold $M$ can be put into bridge position with respect to a minimum genus Heegaard surface $H$ for $M$ \cite{Doll}. The bridge number of $L$ is a well-studied, natural number invariant of the link (see Section \ref{bridge surfaces defn} below). As with Heegaard splittings \cite{Hempel}, almost every bridge surface can be uniquely assigned a non-negative integer called ``distance'' (see Section~\ref{sec: dist}). In an effort to provide the strongest possible results, we study two related notions of distance. Both of our notions of distance are minor variations of those introduced by Saito \cite{Saito} and Bachman-Schleimer \cite{BS}. The distance in the curve complex $d_\mc{C}(L)$ will be defined to be the supremum of the distances among minimal bridge surfaces as measured in the curve complex. This distance $d_\mc{C}(L)$ is a link invariant lying in $\Z \cup \{-\infty, \infty\}$. It turns out, as we explain below, that (after adding the assumptions that $M$ is irreducible, that $L$ is a knot, and that if $M = S^3$, then $b(L) \geq 3$) we actually have $d_\mc{C}(L) \in \N$.  Similarly, the distance in the arc and curve complex $d_\mc{AC}(L)$ will be defined to be the supremum of the distances among minimal bridge surfaces as measured in the arc and curve complex. Like distance in the curve complex, $d_\mc{AC}$ takes values in $\N$ for most knots in irreducible 3-manifolds. As we will be producing upper bounds on quantities involving distance, we note that there exist knots and links of arbitrarily high distance \cites{BTY, IS, JM}. Indeed, if the well-known analogy between bridge surfaces and Heegaard surfaces continues to hold, high distance knots are likely ``generic'' \cites{Maher, LM}. Finally, we note that work of Hayashi-Shimokawa \cite{HS} and Taylor-Tomova \cite{TT} give a useful decomposition theorem for links $L$ with $d_\mc{C}(L) = 1$; Saito gives a classification of (1,1) knots $L \subset S^3$ with $d_\mc{C}(L) \leq 2$; and we \cite{Distance2} give a detailed description of knots $L\subset S^3$ with $d_\mc{C}(L) = 2$.

Although bridge number and distance are independent of each other, we find a surprising relationship between their product and the genus of surfaces in 3-manifolds obtained by Dehn surgery on $L$. More precisely, we show:

\begin{theorem}\label{Main-intro}
Suppose that $L$ is a knot with irreducible and $\boundary$-irreducible exterior in a closed, orientable 3-manifold $M$.  Let $M'$ be the result of non-trivial Dehn surgery on $L$. Let $S \subset M'$ be a closed connected orientable surface of genus $g$. Then the following hold:
\begin{enumerate}
\item If $S$ is an essential surface and if $\Delta$ is the surgery distance, then either there is a closed essential surface of genus $g$ in the exterior of $L$ or
\[
\Delta b(L)(d_\mc{AC}(L) - 2) \leq \max(1,4g - 2)
\]
\item If $S$ is a Heegaard surface, then one of the following holds:
\begin{itemize}
\item $b(L)(d_\mc{AC}(L) - 2) \leq \max(1, 2g)$
\item There is a Heegaard surface of genus $g$ for the exterior of $L$
\item $M$ contains an essential surface of genus strictly less than $g$ which intersects $L$ at most twice.
\item $M(L)$ contains an essential surface of genus at most $g-1$ and $L^*$, the surgery dual to $L$, is isotopic into the surface. Furthermore, the dual surgery on $L^*$ creating $M$ from $M'$ has surgery slope equal to the slope of the boundary of the surface after $L^*$ has been isotoped to lie in it.
\end{itemize}
\end{enumerate}
\end{theorem}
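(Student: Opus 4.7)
Take a bridge surface $H$ realising both $b:=b(L)$ and $d:=d_{\mc{AC}}(L)$; it splits $M$ into handlebodies $V_{\pm}$, each meeting $L$ in $b$ trivial arcs. Let $K\subset M'$ denote the core of the surgery solid torus, so that the knot exterior $E(L)$ is identified with $M'\setminus\operatorname{int}N(K)$. Isotope $S$ in $M'$ to minimise $n:=|S\cap K|$ and set $\hat S:=S\cap E(L)$. Then $\hat S$ has Euler characteristic $2-2g-n$, and $\partial\hat S$ consists of $n$ parallel curves of the surgery slope $\alpha$ on $\partial N(L)$, meeting every meridian of $L$ in exactly $n\Delta$ points. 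In case (1), the subcase $n=0$ provides the closed essential surface of genus $g$ promised by the first alternative, so we may assume $n\geq 1$ and $\hat S$ essential in $E(L)$ (replacing by a compression if necessary). In case (2), ruling out the listed alternatives allows us (after maximal compression) to assume $n\geq 1$ and that $\hat S$ is essential but is neither a Heegaard surface for $E(L)$ nor the restriction of a twice-punctured essential surface in $M$.

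\textbf{Double sweepout.} Realise each $V_\pm$ as a sweepout collapsing $H$ onto a spine $\Sigma_\pm$ consisting of the bridge arcs of $L\cap V_\pm$ together with a spine of the handlebody $V_\pm\setminus L$. This produces a two-parameter family $\{H_{s,t}\}_{(s,t)\in[0,1]^2}$ of bridge surfaces for $L$ in $M$. Place $\hat S$ in generic position and apply the Rubinstein--Scharlemann / Bachman--Schleimer / Tomova machinery: label each region of $[0,1]^2$ by the side of $\hat S$ (if any) into which a curve of $\hat S\cap H_{s,t}$ bounds a compressing disk. Unlabelled regions provide essential simple closed curves (and essential arcs, in the $\mc{AC}$ setting) on the $2b$-punctured surface $H\setminus L$, and saddle or handleslide transitions across edges of the graphic replace these by disjoint representatives. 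Tracing a path of unlabelled regions across $[0,1]^2$ yields a sequence of pairwise-disjoint essential curves/arcs on $H\setminus L$ joining the boundary of a compressing disk for $V_+$ to that of a compressing disk for $V_-$, hence a path of length at least $d-2$ in the arc-and-curve complex of $H\setminus L$.

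\textbf{Counting and the bound.} Bound the length of this chain above by a weighted count of the essential intersection arcs/curves in $\hat S\cap H$ at a thinned level. A standard Euler-characteristic argument gives a linear bound in $-\chi(\hat S)=2g-2+n$. Two refinements produce the $\Delta b$ multiplier: (i) each step of the chain uses an essential curve/arc on the $2b$-punctured surface $H\setminus L$, absorbing at least $b$ units of combinatorial complexity; and (ii) each boundary component of $\hat S$ wraps $\Delta$ times around each meridian of $L$, so each meridional strip of $H$ near $\partial N(L)$ carries $\Delta$ parallel intersection arcs with $\partial\hat S$. Combined with careful treatment of the minimum admissible $n$ (forcing more boundary components when $g$ is small in order to keep $\hat S$ essential), this yields the stated $\max(2,4g-2)$ bound in case (1). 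Case (2) runs analogously, with the Heegaard-surface and twice-punctured-essential-surface alternatives absorbing the minimum-complexity configurations and producing $b(d-2)\leq\max(1,2g)$; the factor $\Delta$ drops out because no surgery slope is recorded once $\hat S$ is closed or can be capped off inside the Heegaard splitting.

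\textbf{Main obstacle.} The technical crux is organising the $n\Delta$ meridional intersection points on $\partial N(L)$ into $b$ parallel families of arcs on $H\setminus L$, each contributing a single step of the distance chain; this is what produces the multiplicative $\Delta b$ prefactor, rather than the weaker bound one gets by ignoring the meridional structure near $\partial N(L)$. A secondary subtlety is that one must admit essential arcs, not only simple closed curves, into the chain (since $\hat S$ has boundary), which is exactly why the statement is phrased in terms of $d_{\mc{AC}}$ rather than $d_{\mc{C}}$.
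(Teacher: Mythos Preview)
Your proposal captures the broad strategy (sweepouts, tracking essential intersections, Euler-characteristic bounds) but has genuine gaps at exactly the points where the argument has content.

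\textbf{Part (1).} You invoke a ``two-parameter family $\{H_{s,t}\}$ of bridge surfaces,'' but no double sweepout is needed here: the paper uses a single sweepout of $M(L)$ by the bridge surface $T$ and analyzes $T_t\cap \hat S$ as $t$ varies. More seriously, the step producing the multiplicative factor $\Delta\, b(L)\,|\partial_L\hat S|$ is not the vague ``each step absorbs at least $b$ units of combinatorial complexity.'' The actual mechanism is a labeling argument: one labels the $|\partial T\cap\partial\hat S|\geq 2b(T)\,|\partial_L\hat S|\,\Delta$ intersection points on $\partial N(L)$, defines a label to be \emph{active} at a critical value if the isotopy class of its adjacent arc changes, and observes that the total number of active events is at most $-2\chi(\hat S)$ (each saddle creates at most two post-active arcs). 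The label with the fewest active events then traces out a path in $\mc{AC}(T)$ of length at most $Q/(b(T)\,|\partial_L\hat S|\,\Delta)$, and this path is flanked by compressing disks for $T$ on each side. Your ``Main obstacle'' paragraph gestures at this, but nothing in the ``Counting'' paragraph actually carries it out; in particular, the claim that each chain step ``absorbs $b$ units'' has no argument behind it.

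\textbf{Part (2).} Here the gap is structural. You write that after ``maximal compression'' one may assume $\hat S$ is essential, but $S$ is a Heegaard surface: it compresses completely on both sides, so maximal compression yields nothing useful. What is actually required is to \emph{untelescope} $\bar S$ (Hayashi--Shimokawa / Taylor--Tomova thin position) into a generalized bridge splitting $\mc{S}\cup\mc{F}$ with each thin surface essential and each thick surface strongly irreducible in the complement of $\mc{F}$. The three alternatives in the statement then arise from a case analysis on this decomposition: a removable thick surface produces the Heegaard-surface-for-$M(L)$ alternative; a cancellable (but not perturbed) thick surface produces an essential surface $R$ whose slope $\rho$ satisfies $\Delta(\rho,\tau)=1$, and when $\rho=\tau$ this gives the twice-punctured essential surface in $M$; otherwise a thin surface with boundary, or a strongly $\partial$-irreducible thick surface analyzed via the double sweepout graphic (spanning/splitting dichotomy), yields the inequality. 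None of this case structure appears in your proposal, and the sentence ``Case (2) runs analogously'' hides precisely the part of the argument that is not analogous. Your explanation for why $\Delta$ disappears in (2) is also incorrect: it drops because in the cancellable case one passes to a surface of a different slope $\rho$, over which one has no control beyond $\Delta(\rho,\tau)\geq 1$.
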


The first conclusion is proved as part of Theorem~\ref{Thm: Ess Surface} and the second as part of Theorem~\ref{Thm:Nonmerid bridge}. Refinements of the bounds in Theorem~\ref{Main-intro} give very strong bounds on the distance and bridge numbers of hyperbolic knots $L$ with exceptional or cosmetic surgeries. We explore these consequences, and others, in the subsections which follow.

\subsection{Cabling Conjecture}\label{sec:Cabling}
The Cabling Conjecture \cite{GAS} asserts that if a knot in $S^3$ has a reducing surgery, then the knot is a cable knot and the slope of the surgery is the slope of the cabling annulus. The conjecture has been shown to hold for many classes of knots including satellite knots~\cite{S}, symmetric knots~\cites{EM92, HS,LZ}, persistently laminar knots~\cites{B1,B2}, alternating knots~\cite{MT}, many knots with essential tangle decompositions~\cite{H}, knots where a surgery produces a connected sum of lens spaces~\cite{Gr}, knots with bridge number at most 4 \cites{Hoffman}, and knots that are band sums~\cite{T}. In this paper, we show knots $L$ with $b(L) \geq 6$ and $d_\mc{C}(L) \geq 3$ satisfy the Cabling Conjecture (see Main Theorem, part \ref{case:reducible}).

This puts strong restrictions on any potential counterexample to the Cabling Conjecture. For, suppose that a counterexample $L \subset S^3$ exists. Hoffman \cite{H1} showed that $b(L) \geq 5$ and, in \cite{Hoffman}, claims he has also proved (in unpublished notes) that $b(L) \geq 6$. Grove \cite{Grove} has recently extended Hoffman's work and has proven that $b(L) \geq 6$. Our result, together with Hoffman and Grove's results, reduces the Cabling Conjecture to studying knots with $d_\mc{C}(L) \leq 2$. Cable knots have distance at most 2, so we have substantial new evidence for the Cabling Conjecture. In fact, it suggests a program for proving the Cabling Conjecture: First, prove it for knots $L$ with $d_\mc{C}(L) = 1$ (which is partially done by Hayashi \cite{H}, since such knots have an essential tangle decomposition). Second, prove it for knots $L$ with $d_\mc{C}(L) = 2$. Such knots have been extensively studied in \cite{Distance2}.

\subsection{Toroidal surgeries}
A characterization of hyperbolic knots in $S^3$ having toroidal surgeries is more elusive and examples are easily constructed by considering knots lying on knotted genus two surfaces in $S^3$. Additionally, many other examples of knots with toroidal surgeries are known (e.g.~\cites{EM93, T03}). It is known that the surgery slope of a toroidal surgery must be integral or half-integral ~\cites{GL95} and the punctured torus in the knot exterior can be assumed to have no more than 2 boundary components~\cite{GL00}. Furthermore, in the case when the surgery slope is non-integral, the knot must be a ``Eudave-Mu\~noz knot'' \cite{GL04}. We show that no hyperbolic knot $L\subset S^3$ with $b(L) \geq 7$ and $d_\mc{C}(L) \geq 3$ has a surgery producing a toroidal 3-manifold (see Main Theorem, part \ref{case:toroidal}).

\subsection{Berge Conjecture}
The Berge Conjecture (cf.~\cite[Problem 1.78]{K}) states that every knot in $S^3$ with a lens space surgery is doubly primitive with respect to a genus two Heegaard surface for $S^3$.  Recently, a number of research programs to prove the Berge Conjecture have been proposed and partially completed. In~\cite{BGH}, a two step program using knot Floer homology is outlined and the first step of the program is completed in~\cite{He}. In~\cite{W}, the first step in a three step program is completed that would result in the proof of the Berge Conjecture for tunnel number one knots. We show that no surgery on a knot $L \subset S^3$ with $d_\mc{C}(L) \geq 5$ produces a lens space (see Main Theorem, part \ref{case:lens space}.)

\subsection{Small Seifert fibered spaces and other exceptional surgeries}
Using the fact that small Seifert fibered spaces have Heegaard genus at most 2~\cite{BZ}, we show that no non-trivial surgery on a knot $L \subset S^3$ with $d_\mc{C}(L) \geq 7$ produces a small Seifert-fibered space (see Main Theorem, part \ref{case:SFS}.)

Theorem~\ref{Bounding distance - special} also gives a version of these results for hyperbolic knots in other 3-manifolds. It uses the Geometrization Theorem~\cites{P1,P2,P3} to show that a knot $L$ in a hyperbolic manifold with $d_\mc{C}(L) \geq 13$ does not have any non-hyperbolic surgeries.

Knots of distance at most 12 are certainly plentiful. But, as long as the distance of a hyperbolic knot $L$ admitting an exceptional surgery is at least 3, we can also often bound $b(L)$. We show (Corollary \ref{Cor: exceptional bridge}) that if $L \subset S^3$ has $b(L) \geq 9$ and $d_\mc{C}(L) \geq 3$ and has tunnel number at least 2, then $L$ admits no non-hyperbolic surgery.

\subsection{Heegaard genus}
Hyperbolic knots not only have very few exceptional surgeries, they also have very few cosmetic fillings~\cite[Theorem 1]{BHW}, i.e., distinct surgeries producing homeomorphic manifolds. A cosmetic surgery on a knot is a non-trivial surgery returning the original manifold. It is conjectured~\cite[Problem 1.81]{K} that hyperbolic knots have no so-called ``exotic'' cosmetic surgeries. For example, Gabai~\cite{G} showed that no non-trivial knot in $S^2 \times S^1$  admits a non-trivial cosmetic surgery. Gordon and Luecke's solution to the knot complement conjecture shows that non-trivial knots in $S^3$ also admit no cosmetic surgeries~\cite{GL89}. The article~\cite{BHW} gives good introduction to cosmetic surgery.

We show:

\begin{theorem-cosmetic}[Theorem~\ref{Bounding distance - special}(4) and Corollary~\ref{Cor: exceptional bridge}]
Assume that $M$ is closed, orientable, irreducible and has Heegaard genus $g$. Let $L \subset M$ be a knot with irreducible and $\boundary$-irreducible exterior. If
\[ d_{\mc{C}}(L) \geq \max(7,4g+ 5)\]
then $L$ does not admit non-trivial cosmetic surgeries. Furthermore, if $M$ is non-Haken, if $M(L)$ does not contain an essential surface of genus at most $g-1$, if $d_\mc{C}(L) \geq 3$, and if $b(L) \geq 2g + 5$, then $L$ also does not admit a non-trivial cosmetic surgery.
\end{theorem-cosmetic}

In particular, as stated in the Main Theorem, part \eqref{case:S^3}, a knot $L$ with irreducible and $\boundary$-irreducible exterior and with $d_\mc{C}(L) \geq 7$ admits no $S^3$ surgery. This stops short of giving a new proof of the Gordon-Luecke result, but does constrain knots in other manifolds having $S^3$ surgeries.  Our theorem is proved by using the Heegaard genus of a manifold resulting from Dehn surgery on $L$ to bound the bridge distance and bridge number of $L$. Many authors (eg. \cite{MR, R, RiSe, FP}) have studied the effect of Dehn surgery on Heegaard surfaces. Most of them have fixed a knot and studied the Heegaard genera which can result from surgery on the knot, often producing an inequality relating surgery distance to Heegaard genus. That is, they show that given a knot, most \emph{slopes} do not admit Heegaard genus decreasing surgeries. On the other hand, we show that most \emph{knots} do not admit Heegaard genus decreasing surgeries. A philosophy similar to the one presented in the current paper is evident in the recent paper by Baker, Gordon, and Luecke ~\cite{BGL} that relates surgery distance, knot width, and the Heegaard genus of the surgered 3-manifold for certain knots in $S^3$.

\subsection{Genus and bridge number}
As a final application, we consider the relationship of Seifert genus to bridge number.

In general, the Seifert genus and the bridge number of a knot $L \subset S^3$ are unrelated. For example, let $J$ be a non-trivial knot and let $K_n$ be the $n$-th Whitehead double of $J$. The knots $K_n$ each have Seifert genus 1, but have bridge number going to infinity with $n$ \cites{Schubert, Schultens}. Hyperbolic examples of this phenomena can likely be constructed using recently developed machinery of Baker, Gordon, and Luecke \cite{BGL2}. In stark contrast, this cannot occur when the knots have bridge distance at least 3:

\begin{maintheorem-intro-2}
Suppose that $L$ is a knot in a homology sphere $M$ with irreducible and $\boundary$-irreducible exterior. Then either $d_\mc{C}(L) \leq 2$ or
\[
b(L) \leq 4g(L) +2,
\]
where $g(L)$ is the Seifert genus of $L$.\end{maintheorem-intro-2}

\subsection{Structure of the paper}

In Section~\ref{sec:definitions} we introduce relevant definitions and background results.  In Section~\ref{sec:essential surfaces}, we bound quantities involving bridge number and bridge distance of a bridge surface for a knot in terms of the genus of surfaces $S$ in the knot exterior that admit desirable embeddings. In Section~\ref{Non-MeridoinalEss}, we apply these bounds in the case when $S$ is an essential surface. In Section~\ref{sec:doublesweepout}, we analyze the graphic associated to the simultaneous sweepouts by two  differently sloped Heegaard surfaces. In Section~\ref{sec: bridge surface bounds} and Section~\ref{sec:ImprovingBridgeBound}, we apply the bounds when $S$ is a Heegaard surface for the surgered manifold (what we call an alternately sloped Heegaard surface).

\section{Definitions}
\label{sec:definitions}

If $X$ is a properly embedded submanifold of $M$, we let ${\eta}(X)$ denote a closed regular neighborhood of $X$ and we let $\inter{\eta}(X)$ denote the interior of the closed regular neighborhood (which is, therefore, an open regular neighborhood.) The \defn{boundary of ${\eta}(X)$} is equal to $ \boundary \eta(X) = {\eta}(X)\setminus \inter{\eta}(X)$ and the \defn{frontier of ${\eta}(X)$} is equal to $\boundary \eta(X) \setminus \boundary M$. Thus the frontier of a regular neighborhood of properly embedded disk in a 3-manifold $M$ consists of 2 disks while the boundary is a 2-sphere. The notation $|X|$ indicates the number of components of $X$. If $L$ is a 1-manifold in $M$, then $M(L)$ denotes the exterior $M \setminus \inter{\eta}(L)$ of $L$. We let $\boundary_0 M(L) = \boundary M$ and $\boundary_L M(L) = \boundary M(L) \setminus \boundary_0 M(L)$. Usually we will make the convention that if $\ob{T} \subset M$ is a surface transverse to $L$, then $T$ (dropping the bar) is the surface $\ob{T} \cap M(L)$. We consider the points $\ob{T} \cap L$ as marked points on $\ob{T}$, and when we switch to considering $T$, the marked points correspond to boundary components of $T$.

Suppose that $\ob{F}$ is a properly embedded compact orientable surface in a 3-manifold $M$ which is transverse to a properly embedded 1-manifold $L$. A simple closed curve in $\ob{F}$ is an \defn{essential curve} in $\ob{F}$ if it is disjoint from the marked points, does not bound a disc in $\ob{F}$ having exactly 0 or 1 marked points, and is not parallel to a component of $\boundary \ob{F}$ in the complement of the marked points. Similarly, suppose that $\alpha \subset \ob{F}$ is one of the following:
\begin{itemize}
\item an embedded arc $\alpha \subset \ob{F}$ with endpoints on the union of $\boundary \ob{F}$ and the marked points, having interior disjoint from the marked points, and for which there is no subarc $\beta \subset \boundary \ob{F}$ such that $\boundary \beta = \boundary \alpha$ and $\alpha \cup \beta$ bounds a disc in $\ob{F}$ containing no marked points.
\item an embedded simple closed curve in $\ob{F}$ intersecting exactly one marked point and which does not bound a disc in $\ob{F}$ having no marked points in its interior.
\end{itemize}
Then we call $\alpha$ an \defn{essential arc} in $\ob{F}$. There is a natural bijection between essential curves and arcs in $\ob{F}$ and those in $F$.

For a surface $\ob{F} \subset M$ transverse to $L$, a \defn{compressing disc} $D$ for $\ob{F}$ is a disc embedded in $M$, with interior disjoint from $\ob{F} \cup L$, and having boundary an essential curve on $\ob{F}$. If there is a compressing disc for $\ob{F}$, then $\ob{F}$ is \defn{compressible}. Otherwise, $\ob{F}$ is \defn{incompressible}. A disc $D \subset M$ with embedded interior disjoint from $L \cup \ob{F}$ and whose boundary is the endpoint union of an arc in $\boundary M \cup L$ and an essential arc $\alpha$ in $\ob{F}$ is called a \defn{$\boundary$-compressing disc} for $\ob{F}$. Abusing terminology slightly, we say that $\alpha$ \defn{bounds} the disc $D$. If there is a $\boundary$-compressing disc for $\ob{F}$ then we say that $\ob{F}$ is \defn{$\boundary$-compressible} and if there is no such disc then $\ob{F}$ is \defn{$\boundary$-incompressible}. Observe that $\ob{F}$ is compressible if and only if $F$ is compressible in $M(L)$ and that $\ob{F}$ is $\boundary$-compressible  if and only if $F$ is $\boundary$-compressible in $M(L)$.

A surface $F$ properly embedded in a 3-manifold $N$ is an \defn{essential surface} if it is incompressible, not boundary parallel, and not a 2--sphere bounding a 3-ball. Unless explicitly stated otherwise, all essential surfaces will be surfaces without marked points (but could have non-empty boundary).

If $V$ is a union of tori then a \defn{multislope} $\sigma$ in $V$ is an isotopy class in $V$ of a union of essential curves, one in each component of $V$. If $V \subset \boundary N$ and if $F \subset N$ is a properly embedded surface, we say that $F$ defines a $\sigma$-slope in $V$ (and that $F$ is $\sigma$-sloped) if $\sigma$ is represented by the union of components of $\boundary F \cap V$. If $\sigma$ and $\tau$ are multislopes in $V$, their \textit{intersection number}, denoted $\Delta(\sigma,\tau)$, is calculated by taking the minimum intersection number (over all components of $V$) between minimally intersecting representatives of $\sigma$ and $\tau$ in a single component of $V$. In particular, if $\Delta(\sigma, \tau) > 0$ then the slopes are distinct in every component. The result of Dehn filling $N = M(L)$ along the multislope $\sigma$ is denoted $M(L)(\sigma)$. When $L$ and $\sigma$ are clear from context, we will use $M'$ instead of $M(L)(\sigma)$ to avoid cumbersome notation. The \defn{surgery distance} between $M(L)(\sigma)$ and $M(L)(\tau)$ is the value of $\Delta(\sigma,\tau)$. When $\sigma$ and $\tau$ are understood, we will simply write $\Delta$.

\subsection{Heegaard splittings}
A \defn{handlebody} is a compact 3-manifold homeomorphic to a closed regular neighborhood of a finite graph $G$ embedded in $\R^3$. A \defn{compressionbody} $C$ is a connected 3-manifold homeomorphic to any component of a regular neighborhood in a orientable 3-manifold $N$ of $G \cup \partial N$, where $G$ is a properly embedded finite graph in $N$ (i.e., $\partial N \cap G$ consists only of vertices of $G$). We allow $G$ to be empty and thus a compressionbody may be a regular neighborhood of a surface, i.e., a product. If $G$ is disjoint from $\boundary N$, then every component of $\eta(G)$ is a handlebody, so handlebodies are examples of compressionbodies. The intersection $C \cap \partial N$ is called the \defn{negative boundary} of $C$ and is denoted $\boundary_- C$. We call $\boundary_+ C = \boundary C \setminus \boundary_- C$ the \defn{positive boundary} of $C$. There is a collection $\mc{D}$ of properly embedded, pairwise disjoint, discs in $C$ with boundary on $\boundary_+ C$ such that the result of $\boundary$-reducing  $C$ using $\mc{D}$ is isotopic to $\boundary_-C \times I$ (where $I$ is the interval $[0,1]$). This product $\boundary_- C \times I$ is not uniquely defined as it depends on the choice of discs $\mc{D}$, but it is an embedded (though not properly embedded) submanifold of $C$.

A \defn{Heegaard splitting} for a 3-manifold $M$ is a triple $(\ob{H}, \ob{H}_{\dn}, \ob{H}_{\up})$ where $\ob{H}$ is a connected, closed, embedded, separating surface and $\ob{H}_{\dn}$, $\ob{H}_{\up}$ are compressionbodies with disjoint interiors such that $M=\ob{H}_{\dn}\cup_{\ob{H}} \ob{H}_{\up}$ and $\ob{H} = \boundary_+ \ob{H}_\dn = \boundary_+ \ob{H}_\up$. The surface $\ob{H}$ is called a \textit{Heegaard surface}.

\subsection{Bridge surfaces}\label{bridge surfaces defn}

The terminology presented in this section is an adaptation of that used by various authors including Hayashi-\linebreak Shimokawa~\cite{HS01}, Scharlemann-Tomova~\cite{STo}, and Taylor-Tomova~\cite{TT}.

Let $\ob{H}$ be a Heegaard surface for a compact, orientable manifold $M$ transverse to a properly embedded 1-manifold $L$. $\ob{H}$ is a \defn{bridge surface} for $(M,L)$ if, for each compressionbody $\ob{C}$ which is the closure of a component of $M \setminus \ob{H}$, the arcs $L \cap \ob{C}$ can be properly isotoped (relative to their endpoints) so that each either lies in $\ob{H}$ or is vertical in $\boundary_- \ob{C} \times I$ where, as above, $\boundary_- \ob{C} \times I$ is the result of $\boundary$-reducing $\ob{C}$ using some collection $\mc{D}$ of properly embedded, pairwise disjoint, discs in $\ob{C}$ which are disjoint from $L$ and have boundary on $\boundary_+ \ob{C}$.

If $\ob{H}$ is a bridge surface for $(M, L)$ and if $\alpha$ is the closure of a component of $L \setminus \ob{H}$, then either $\alpha$ has both endpoints on $\ob{H}$ or $\alpha$ has one endpoint on $\ob{H}$ and one on $\boundary M$. In the former case, $\alpha$ is a \defn{bridge arc} and in the latter case $\alpha$ is a \defn{vertical arc}. For each bridge arc $\alpha$ in $\ob{C}$, there is an arc $\beta$ in $\ob{H}$ having the same endpoints as $\alpha$ and with interior disjoint from $L$ so that $\alpha \cup \beta$ bounds a disc in $\ob{C}$ with interior disjoint from $L$. Such a disc is called a \defn{bridge disc} for $\alpha$. Note that a bridge disk is a boundary compressing disc for $\ob{H}$. If $\ob{H}$ is a bridge surface for $(M,L)$, we say that $L$ is in \defn{bridge position} with respect to $\ob{H}$.

If $\ob{H}$ is any bridge surface for $(M,L)$, we define the \defn{bridge number} $b(H) = b(\ob{H}) = |\ob{H} \cap L|/2$. The \defn{bridge number} $b(L)$ of a knot $L \subset M$ is the minimum value of $b(\ob{H})$ taken over all minimal genus bridge surfaces of $(M,L)$. On a related note, we will also need to use the following notion: Assume $\ob{H}$ is a bridge surface for $(M,L)$. For each component $K$ of $L$, count the number of intersections of $\ob{H}$ with $K$. Let $b_{\min}(H)$ be half the minimum, taken over all components of $L$. Since $\ob{H}$ is a bridge surface for $(M,L)$, the number $b_{\min}(H)$ is a natural number and if $L$ is a knot then $b_{\min}(H) = b(H)$.

A bridge surface $\ob{H}$ is \defn{stabilized} if there is a pair of compressing discs $D_{\dn}$, $D_{\up}$ on opposite sides of $\ob{H}$ such that $\boundary D_{\dn}$ and  $\boundary D_{\up}$ intersect transversally and in a single point. A bridge surface $\ob{H}$ is \defn{weakly reducible} if there is a pair of disjoint compressing discs $D_{\dn}$, $D_{\up}$ on opposite sides of $\ob{H}$. A bridge surface, which is compressible to both sides but is not weakly reducible is called \defn{strongly irreducible}.  A bridge surface $\ob{H}$ is \defn{weakly $\boundary$-reducible} if there is a pair of disjoint bridge discs or compressing discs on opposite sides of $\ob{H}$. If $\ob{H}$ has bridge discs or compressing discs on opposite sides but is not weakly $\boundary$-reducible we say it is \defn{strongly $\boundary$-irreducible}.

Two ways of being weakly $\boundary$-reducible are particularly important to us in the proof of Theorem \ref{Thm:Nonmerid bridge}. Suppose that a bridge surface $\ob{H}$ has bridge discs $D_\dn$ and $D_\up$ on opposite sides of $\ob{H}$ such that $D_\dn \cap D_\up$ is contained in $L$. If $D_\dn \cap D_\up$ is a single point, then $\ob{H}$ is \defn{perturbed}. If $D_\dn \cap D_\up$ is two points, then $\ob{H}$ is \defn{cancellable} and the component $K = L \cap (\boundary D_\dn \cup \boundary D_\up)$ is called a \defn{cancellable component} of $L$. It is possible that there are multiple cancellable components. If $\ob{H}$ is perturbed, isotoping $\ob{H}$ across either $D_\up$ or $D_\dn$ produces a new bridge surface for $(M,L)$ intersecting $L$ two fewer times. If $\ob{H}$ is cancellable and if $K$ is a cancellable component, we may isotope $\ob{H}$ in a regular neighborhood of $D_\dn \cup D_\up$ to contain $K$. We call the surface $\ob{R}$ resulting from this isotopy a \defn{cancelled bridge surface} and set $R = \ob{R} \cap M(L)$. We will say more about these surfaces later. It is easy to see that a bridge surface which is weakly $\boundary$-reducible is either weakly reducible, perturbed, or cancellable.

We say that a component $K$ of $L$ is \textit{removable} if $K$ is cancellable with cancelling discs $D_\dn, D_\up$ and there is a compressing disc $D$ disjoint from one of $D_\dn, D_\up$ and intersecting the other transversally in a single point in its boundary. In this case we also say that the bridge surface $\ob{H}$ is \defn{removable}. If a component $K \subset L$ is removable, the surface $\ob{H}$ can be isotoped in a regular neighborhood of $D \cup D_\dn \cup D_\up$ to be a bridge surface for $(M \setminus \inter{\eta}(K), L \setminus K)$. See~\cite{STo} for more details.

As usual, if $\ob{H}$ is stabilized, weakly reducible, etc. then we say that $H = \ob{H} \cap M(L)$ is as well.

\subsection{Thin position}
Ever since Gabai's introduction \cite{G} of thin position, it has been an extremely useful tool in studying knots and 3-manifolds. We use a version of thin position due, in its original form, to Hayashi and Shimokawa \cite{HS}. Here are the relevant definitions and results.

Let $(\ob{\mc{F} }, \ob{\mc{S}}) \subset M$ be a pair of (not necessarily connected) orientable, closed, properly embedded surfaces transverse to $L$. Then $(\ob{\mc{F}}, \ob{\mc{S}})$ is a \defn{multiple bridge surface} \cite{HS} for $(M,L)$ if $\ob{\mc{F}}$ is separating, in each component $M_i$ of $M \setminus \inter{\eta} (\ob{\mc{F}})$ there is a unique component of $\ob{\mc{S}}$, and that component is a bridge surface for $(M_i, M_i \cap L)$ and for each component of $\ob{\mc{S}}$ there is such an $i$. We call the collection $\ob{\mc{F}}$ the \defn{thin surfaces} and the collection $\ob{\mc{S}}$ the \defn{thick surfaces}. Since each thick surface is a bridge surface for a component of $M \setminus \inter{\eta}(F)$, we will apply the terms ``stabilized'', ``weakly reducible'', etc. to the thick surfaces as necessary.

Taylor and Tomova \cite{TT} (generalizing work of Hayashi-Shimokawa)  prove a more general version of the following theorem.

\begin{theorem}\label{ThinPosition1}
Let $L \subset M$ be a link in a closed, orientable 3-manifold such that $M(L)$ is irreducible and no sphere in $M$ intersects $L$ transversally exactly once. Suppose that $\ob{H}$ is a genus $g$ bridge surface for $(M,L)$ which is not removable, perturbed, or stabilized. Then there is a multiple bridge surface $(\ob{\mc{F}}, \ob{\mc{S}})$ for $(M,L)$ such that all of the following hold:
\begin{enumerate}
\item Each component of $\mc{F} = \ob{\mc{F}} \cap M(L)$ is essential in $M(L)$.
\item Each component of $\ob{\mc{S}}$ is a strongly irreducible bridge surface for $(M_0, L \cap M_0)$ where $M_0$ is the component of the closure of $M \setminus \inter{\eta} (\ob{\mc{F}})$ containing it.
\item Each component $\ob{J}$ of $\ob{\mc{S}} \cup\ob{\mc{F}}$ has genus no greater than $g$ and intersects $L$ no more than $|\ob{H} \cap L|$ times and is obtained by compressing $\ob{H}$ in $M(L)$. Furthermore, if $\ob{\mc{F}}$ is non-empty, at least one compression of $\ob{H}$ is necessary to form any component of $\ob{\mc{F}} \cup \ob{\mc{S}}$.
\item No component of $\ob{\mc{S}}$ is perturbed or removable in its component of the closure of $M \setminus \inter{\eta} (\ob{\mc{F}})$.
\item Each component of $\ob{\mc{S}}$ is separating in $M$.
\end{enumerate}
\end{theorem}
\begin{proof}
Conclusions (1) and (2) follow directly from \cite[Corollary 9.4]{TT} applied with $K = \ob{H}$, $T = L$, and $\Gamma = \nil$. Conclusions (3) and (5) follow from the definition of ``thinning'' given in \cite{TT}. Conclusion (4) follows from \cite[Lemma 6.3]{TT}.
\end{proof}

\begin{remark}
As stated above, Theorem~\ref{ThinPosition1} might also follow from the work of \cite{HS}, although it is not immediately clear that all of Hayashi and Shimokawa's thin surfaces are essential. It is also similar to Campisi's theorem in \cite{C} which develops thin position for sloped Heegaard splittings (see below). Her theorem, however, also does not guarantee that all thin surfaces are essential.
\end{remark}

The next lemma records, for later reference, some properties of a cancelled bridge surface.
\begin{lemma}\label{Lem:Cancellable}
Let $L \subset M$ be a link in a closed, orientable 3-manifold such that $M(L)$ is irreducible and no sphere in $M$ intersects $L$ transversally exactly once.  Let $(\ob{\mc{F}}, \ob{\mc{S}})$ be a multiple bridge surface for $(M,L)$ satisfying conclusions (1), (2), (4), and (5) of Theorem \ref{ThinPosition1}. Suppose that a component $ \ob{J} \subset \ob{\mc{S}}$ is a cancellable bridge surface for $(M_0, L \cap M_0)$ where $M_0$ is the closure of the component of $M\setminus \inter{\eta}(\ob{\mc{F}})$ that contains $\ob{J}$. Let $K$ be a cancellable component of $L\cap M_0$ and let $\ob{R}$ be a cancelled bridge surface obtained by isotoping $\ob{J}$ to contain $K$. Then all of the following hold:
\begin{itemize}
\item Let $\rho$ and $\tau$ be slopes in $\partial \eta(K)$ defined by $\boundary R$ and $\boundary J$, respectively. Then $\Delta(\rho, \tau)=1$.
\item $R$ is an essential surface in $M(L)$.
\item The sum of the genera of the components of $R$ is equal to $g(J)$ if $R$ disconnected and equal to $g(J)-1$ if $R$ is connected.
\item If $K = L$, then one of the following happens:
\begin{enumerate}
\item\label{incomp surf} after Dehn filling $\boundary \eta(K) \subset \boundary M(L)$ with slope $\rho$ and capping $\boundary R$ with discs, we obtain an incompressible surface in the filled manifold.
\item\label{isotopic thin} $K$ is isotopic into a component $\ob{F}$ of $\ob{\mc{F}}$. Furthermore, in $M$, $\ob{F}$ and $\ob{J}$ bound a product compressionbody.
\end{enumerate}
\end{itemize}
\end{lemma}

\begin{proof}
Suppose that $D_{\dn}$ and $D_{\up}$ are bridge discs for the cancellable component $K \subset \boundary D_\dn \cup \boundary D_\up$. We can obtain $R$ by simultaneously boundary compressing $J$ using the discs $D_\dn$ and $D_\up$. Since these discs each intersect a component of $\partial J$ exactly once, each component of $\boundary R$ in $\boundary \eta(K)$ intersects a meridian of $K$ exactly once. This is the first claim.

Suppose that $E$ is a compressing disc for $R$. Since $E$ is disjoint from $K$, we can assume $E$ is fixed by the isotopy taking $\ob{R}$ to $\ob{J}$ and that $E$ is disjoint from $D_\dn\cup D_\up$.

Suppose $\partial E$ is inessential in $J$, then $\partial E$ bounds a disc in $\ob{R}$ that is disjoint from $L$. Since $E$ is disjoint from $D_\dn\cup D_\up$ and $K\subset D_\dn\cup D_\up$, then $D_\dn\cup D_\up$ is disjoint from the disc $\partial E$ bounds in $J$. Thus, $\partial E$ is inessential in $R$, a contradiction to the fact that $M(L)$ is $\boundary$-irreducible. Hence, $E$ is a compressing disc for $J$. Since by assumption $\mc{F}$ is incompressible (or empty), we may isotope $E$ to lie in $M_0$.  Since one of $D_{\dn}$, $D_{\up}$ is on the opposite side of $\ob{J}$ from $E$ and since $E$ is disjoint from $D_\dn\cup D_\up$, then $J$ is weakly reducible, a contradiction to our assumption that the thick surfaces satisfy the conclusions of Theorem~\ref{ThinPosition1}. Therefore $R$ must be incompressible.

Suppose that $E$ is a boundary compressing disc for the incompressible surface $R$. Since $\boundary R$ lies in torus components of $\boundary (M(L))$, this implies that $R$ is a boundary parallel annulus. Hence, $L = K$ is a knot. Moreover, $J$ is obtained from $R$ by attaching the annulus $\eta(K) \cap J$ and so $J$ is a torus and $K$ can be isotoped to lie on this torus. In fact, $E$ is a compressing disc for $L$ intersecting one of $D_\dn$ or $D_\up$ exactly once. Hence, $\ob{J}$ is removable, a contradiction to our  Conclusion (4) of  Theorem~\ref{ThinPosition1}. We conclude that $R$ is an essential surface in $M(L)$. This is the second claim.

The loop $K$ in $\ob{J}$ is either separating or non-separating. If it is non-separating, then $R$ is connected and has genus one less than the genus of $J$. If $K$ is separating, then $R$ has two components and their genera add up to the genus of $J$ by the additivity of Euler characteristic. This is the third claim.

Suppose that $K = L$. Since $M(L)$ is irreducible and $\bdd$-irreducible, $K$ is an essential loop in $\ob{R}$. By Conclusion (5) of Theorem~\ref{ThinPosition1}, $\ob{R}$ separates $M$. Suppose that $\ob{R}$ is compressible to both sides in $M$. Recall from the second claim that $R$ is incompressible in $M(L)$. Hence, the Jaco handle addition theorem~\cite{J}, applied to the 3-manifolds on either side of $\ob{R}$, implies that $R(\rho)$ is incompressible in $M(L)(\rho)$. This is Conclusion \eqref{incomp surf}. Assume, therefore, that $\ob{R}$ is incompressible to one side. Since $M$ is closed and since $\ob{R}$ is isotopic in $M_0$ to $\ob{J} \subset \ob{S}$, there must be a component $\ob{F} \subset \ob{\mc{F}}$ which bounds a product compressionbody with $\ob{R}$.  Isotoping $K$ through the product compressionbody puts it in $\ob{F}$. This is Conclusion \eqref{isotopic thin}.
\end{proof}

\subsection{Sloped Heegaard surfaces}\label{sec:sloped}
Rather than working with a (3-manifold, link) pair $(M,L)$, it is often advantageous to work entirely in the exterior of the link. To that end, we follow Campisi~\cite{C} and define the notion of a ``sloped Heegaard surface''. The definition of a ``sloped Heegaard surface''  used in ~\cite{C} is different from the definition we give here, however, one can verify that they are equivalent (but we will not use that fact).

\begin{definition}
Let $N=M(L)$ and let $\boundary_L N=\bdd N \setminus \bdd M$ and let $\sigma$ be a multislope in $\boundary_L N$. Let $L^*$ be the cores of the attaching solid tori for $M(L)(\sigma)$. Meridians of $L^*$ correspond to the slopes of $\sigma$. A \emph{sloped Heegaard surface} for $N$ of slope $\sigma$ is the restriction of any bridge surface for $(M(L)(\sigma), L^*)$ to $N$. We will denote this sloped Heegaard surface by $S$ where $S$ is the restriction of the bridge surface $\ob{S}$ to $N$. We let $S_{\dn}$ and $S_{\up}$ be the restrictions of the compressionbodies $\ob{S}_\dn$ and $\ob{S}_\up$ to $N$.

Similarly, a \emph{sloped generalized Heegaard surface} for $N$ of slope $\sigma$ is the restriction of any multiple bridge surface $(\mc{\ob{F}} , \mc{\ob{S}})$ for $(M(L)(\sigma),L^*)$ to $N$. We will denote this sloped Heegaard surface by $(\mc{F} , \mc{S})$ where $\mc{F} = \ob{\mc{F}} \cap N$ and $\mc{S} = \ob{\mc{S}} \cap N$. We refer to the components of $\mc{F}$ as thin surfaces and the components of $\mc{S}$ as thick surfaces. We define each component of $\mc{S}$ to be a sloped Heegaard surface for the component of $N \setminus \inter{\eta}(\mc{F})$ containing it.
\end{definition}

\begin{remark}
In what is to follow, $T$ will also often denote a sloped Heegaard surface. Also note that if $S$ is a sloped Heegaard surface for $N$, then the entire boundary of $S$ lies in  $\boundary_L N$.
\end{remark}

\subsection{Sweepouts}

Suppose that $\ob{C}$ is a compressionbody containing the union $L$ of bridge arcs and vertical arcs in $\ob{C}$. Let $C = \ob{C} \setminus \inter{\eta}(L)$. We let $\boundary_\pm C = \boundary_\pm \ob{C} \setminus \inter{\eta}(L)$. A \defn{spine} for $(C,L)$ is an embedded graph $G\subset C$ defined as follows. $G$ is the union of a properly embedded graph $G_0 \subset C$ and a set of meridian loops $\mu$ on $\boundary \eta(L)$ such that the following hold:
\begin{itemize}
\item $G_0$ is disjoint from $\boundary_+ \ob{C}$.
\item If $\lambda$ is a vertical arc of $L$, then $\mu$ is disjoint from $\eta(\lambda)$.
\item If $\lambda$ is a vertical arc of $L$, then $G_0$ is disjoint from $\eta(\lambda)$.
\item If $\lambda$ is a bridge arc of $L$, then $G_0$ has a single valence-one vertex on $\boundary \eta(\lambda)$ and is otherwise disjoint from $\boundary \eta(\lambda)$.
\item If $\lambda$ is a bridge arc of $L$ then there is exactly one component of $\mu$ on $\boundary \eta(\lambda)$ and it contains the vertex of $G_0$ on that component.
\item The manifold $C \setminus \inter{\eta}(G_0)$ is homeomorphic to $\boundary_+ C \times I$ by a homeomorphism $\phi_C$ which is the identity on $\boundary_+ C = \boundary_+C \times \{1\}$.
\end{itemize}
See Figure \ref{fig:spine} for an example of a spine. We may extend $\phi_C$ to a map $\phi_C \co C \to [0,1]$ so that $\phi_C(\boundary_- C \cup G) = 0$ and $\phi_C(\boundary_+ C) = 1$ and for each $t \in (0,1]$, the surface $\phi_C^{-1}(t)$ is isotopic to $\boundary_+ C$. We call the map $\phi_C$ a \defn{half-sweepout}.

\begin{figure}[ht]
\labellist \small\hair 2pt
\pinlabel {$\boundary_+ C$} [b] at 177 156
\pinlabel {$\boundary_- C$} [b] at 177 10
\pinlabel {$L$} [l] at 43 69
\pinlabel {$L$} [t] at 354 102
\pinlabel {$\eta(L)$} [b] at 485 144
\pinlabel {$\eta(L)$} [b] at 774 144
\pinlabel {$G_0$} [l] at 556 61
\pinlabel {$G_0$} [l] at 810 51
\pinlabel {$\mu$} [t] at 747 80
\endlabellist
\centering
\includegraphics[scale=.4]{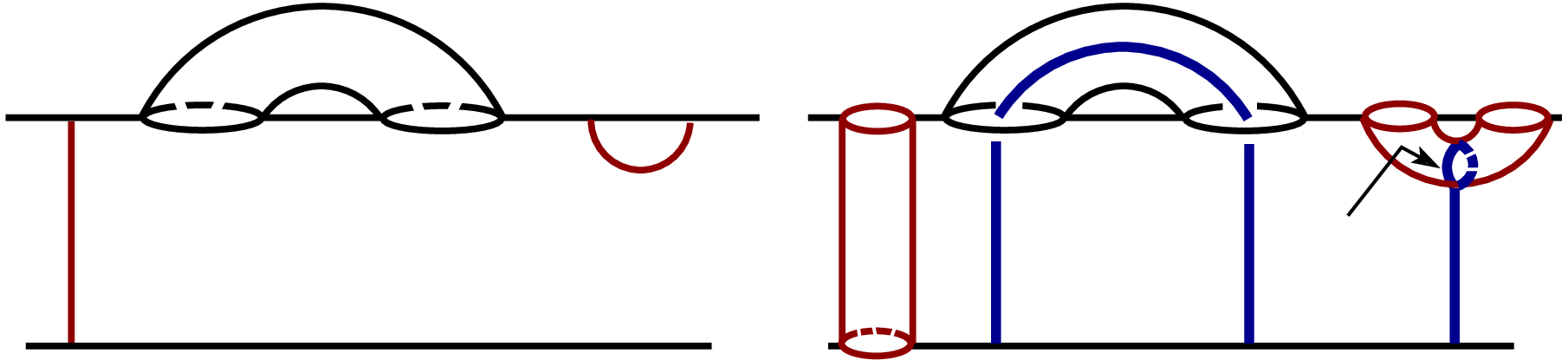}
\caption{On the left is an example of $(C,L)$ where the genus of $\boundary_+ C$ is one greater than the genus of $\boundary_- C$ and $L$ is the union of a vertical arc and a bridge arc. On the right, the spine is shown in thicker lines and, in the online version of the paper, in blue.}
\label{fig:spine}
\end{figure}

\begin{definition}
Suppose that $H$  is a sloped Heegaard surface for $M(L)$ inducing slope $\sigma$ on $\boundary_L M(L)$. Choose spines for $(H_\dn, L \cap H_\dn)$ and $(H_\up, L \cap H_\up)$. Let $\phi_{H_\dn}$ and $\phi_{H_\up}$ be the associated half-sweepouts. Define $\phi \co M(L) \to [0,1]$ as follows:
\[\phi(x) =
\begin{cases}
(\phi_{H_\dn}(x))/2 & \text{ if } x \in H_\dn \\
1 -\phi_{H_\up}(x)/2 & \text{ if } x \in H_\up \\
\end{cases}
\]
Observe that $\phi$ has the properties:
\begin{itemize}
\item  $\phi^{-1}(0)$ is the union of a spine for $(H_\dn, L \cap H_\dn)$ with $\boundary_- H_\dn$\
\item $\phi^{-1}(1)$ is the union of a spine for $(H_\up, L \cap H_\up)$ with $\boundary_- H_\up$\
\item For all $t \in (0,1)$, the surface $H_t = \phi^{-1}(t)$ is properly isotopic to $H$.
\end{itemize}
\end{definition}

We call the map $\phi$, and any small perturbation of $\phi$, a \defn{sweepout} of $M(L)$ associated to the surface $H$. To gain an informal understanding of what a sweepout is, observe that if we pick spines $G_\up$ and $G_\dn$ for the compressionbodies on either side of a bridge surface $\ob{H}$ for $(M,L)$, then $M(L) \setminus (G_\up \cup G_\dn)$ is homeomorphic to $H \times (0,1)$ and the sweepout is the projection onto the second factor. If $\mu$ is the union of the meridional curves on $\boundary \eta(L)$ which belong to $G_\up \cup G_\dn$, then we observe that $\boundary_L M(L) \setminus \mu$ corresponds to $\boundary H \times (0,1)$. For convenience, we extend the sweepout $\phi$ to be defined on all of $M(L)$ by sending $G_\dn$ to $0$ and $G_\up$ to 1.

We can perform a similar construction for thick surfaces of sloped generalized Heegaard surfaces. If $(\mc{F}, \mc{S})$ is a sloped generalized Heegaard surface for $M(L)$, a component $H \subset \mc{S}$ is a sloped Heegaard surface for the component $M_0$ of $M \setminus \inter{\eta}(\mc{F})$ containing it. We can construct a sweepout $\phi$ for $M_0$ as above and we call $\phi$ a \defn{partial sweepout} of $M(L)$. Each surface $H_t$ for $t \in (0,1)$ can be canonically identified with $H$ using the proper isotopy induced by the sweepout as noted above. When discussing isotopy classes of curves on $H_t$, we will often use this identification to think of them as isotopy classes of curves on $H$.

\subsection{Distance}\label{sec: dist}

The \textit{curve complex} $\mathcal{C}(T)$ of a compact surface $T \subset N$ is the graph whose vertices are isotopy classes of essential simple closed curves in $T$ and whose edges span pairs of isotopy classes of curves that have disjoint representatives.  The curve complex of a compact, connected, orientable surface is connected and infinite as long as the surface is not a genus 0 surface with four or fewer boundary components or a genus 1 surface with zero or one boundary components.  Let $\goodCurves$ be the set of homeomorphism classes of compact, connected, orientable surfaces with connected and infinite curve complex. If $T \in \goodCurves$, we make the vertex set of $\mathcal{C}(T)$ a metric space by defining the distance between two vertices as the number of edges in the shortest edge path between them.

If $T$ is a sloped Heegaard surface, the \defn{disc sets} $\mathcal{D}_{\up}$, $\mathcal{D}_{\dn}$ in $\mc{C}(T)$ are the sets of vertices representing loops that bound compressing discs for $T$ in $T_{\up}$ and $T_{\dn}$ respectively.

\begin{definition}
If $T$ is a sloped Heegaard surface and $T \in \goodCurves$, the \defn{distance} $d_{\mathcal{C}}(T)$ is the distance in $\mathcal{C}(T)$ between $\mathcal{D}_{\up}$ and $\mathcal{D}_{\dn}$. If $T \not\in \goodCurves$, for convenience, we set $d_\mc{C}(T) = -\infty$. If $\ob{T}$ is a bridge surface for $(M,L)$ we define $d_\mc{C}(\ob{T}) = d_\mc{C}(T)$.
\end{definition}

In other words, if $T \in \goodCurves$, the distance $d_{\mathcal{C}}(\ob{T})$ is the minimum $n$ such that there are essential simple closed curves $\gamma_0, \gamma_1, \hdots, \gamma_n$ in $\ob{T}$ (considered as a surface with marked points) such that $\gamma_0$ and $\gamma_n$ bound compressing discs for $T$ lying below and above $\ob{T}$ respectively and so that for all $i$, the curve $\gamma_i$ can be isotoped to be disjoint from $\gamma_{i+1}$.

\begin{remark}
If $\ob{T}$ is a four punctured sphere or unpunctured torus, the ``Farey complex'' is often used in place of the curve complex and a distance is defined in the Farey complex instead. Much of what we do should carry over to that setting, but for simplicity we consider only distance in the curve complex.
\end{remark}

\begin{definition}
Given $(M,L)$ where $M$ is a compact orientable 3-manifold and $L \subset M$ is a link, the \defn{distance} $d_{\mathcal{C}}(L)$ is the supremum of the set $\{d_\mc{C}(\ob{T})\}$ where $\ob{T}$ is a bridge surface for $(M,L)$ realizing $b(L)$ (recalling our convention that only minimal genus Heegaard surfaces in $M$ can realize $b(L)$).
\end{definition}

\begin{remark}
We observe that if $L \subset S^3$ is the unknot or a 2-bridge link, then $d_\mc{C}(L) = -\infty$, as $d_\mc{C}(T) = -\infty$ when $T$ is an annulus or planar surface with four boundary components. If $M \neq S^3$, or if $M= S^3$ and $b(L) \geq 3$, then $d_\mc{C}(L) \geq 0$. If fact, it follows from \cite{To07} that if $M$ is a closed, orientable, irreducible 3-manifold and if $L$ is a knot, then $d_\mc{C}(L) < \infty$ and the supremum is actually a maximum. That is, there is a bridge surface whose distance realizes $d_\mc{C}(L)$. To see this, let $\ob{T}'$ be any fixed bridge surface for $(M,L)$. Then for any other non-equivalent bridge surface $\ob{T}$, by \cite{To07} $d_{\mathcal{C}}(\ob{T}) \leq 2-\chi(T')$ where as usual $T'=\ob{T}'\cap M(L)$.
\end{remark}

The \defn{arc and curve complex} $\mathcal{AC}(T)$ for a compact, orientable surface $T$ is defined similarly; it is the simplicial complex whose vertices are isotopy classes of essential simple closed curves and essential properly embedded arcs. Edges span pairs of disjoint arcs/curves. The arc and curve complex is connected and infinite for any compact, connected, orientable surface which is not a planar surface with three or fewer boundary components or a genus 1 surface with no boundary components. We let $\goodArcs$ be the set of homeomorphism types of compact, connected, orientable surfaces which have infinite, connected arc and curve complex.

As with the curve complex, we make its vertex set a metric space by declaring each edge of $\mc{AC}(T)$ to have length one. Given a  bridge surface $\ob{T}$ of $(M,L)$ with $L \subset M$ a link, the collection of boundary compressing discs and compressing discs for $T$ intersects $T$ in essential arcs and essential loops, respectively. These arcs and loops define vertices in the arc and curve complex of $T$. The \defn{disc sets} of $\mc{AC}(T)$ are the sets $\mathcal{D}_{\up}$, $\mathcal{D}_{\dn}$ of vertices of $\mathcal{AC}(T)$ defined by the isotopy classes of arcs and circles in $T$ arising from the boundary compressing and compressing discs on the two sides of $T$.

\begin{definition}
If $T \in \goodArcs$ is a sloped Heegaard surface, the \defn{distance} $d_{\mathcal{AC}}(T)$ is the distance in $\mathcal{AC}(T)$ from the set $\mathcal{D}_{\up}$ to the set $\mathcal{D}_{\dn}$. If $T \not\in \goodArcs$, for convenience, we set $d_\mc{AC}(T) = -\infty$. If $\ob{T}$ is a bridge surface for $(M,L)$, we let $d_\mc{AC}(\ob{T}) = d_\mc{AC}(T)$. We define $d_\mc{AC}(L)$ to be the supremum of the set $\{d_\mc{AC}(\ob{T})\}$ for all bridge surfaces $\ob{T}$ for $(M,L)$ realizing $b(L)$.
\end{definition}

\begin{remark}
If $L \subset S^3$ in the unknot, then $d_\mc{AC}(L) = -\infty$; otherwise, for a link $L$ in a compact, orientable 3-manifold $M$, $d_\mc{AC}(L) \geq 0$. It follows from Lemma \ref{Lem: relating AC and C} below that if $M$ is closed, orientable and irreducible and if $L \subset M$ is a knot, then $d_\mc{AC}(L) < \infty$.
\end{remark}

In general, we can also define the distance between any two subsets of $\mc{C}(T)$ or $\mc{AC}(T)$, as follows:
\begin{definition}
If $T \in \goodCurves$ or if $T \in \goodArcs$, the \defn{distance} (in $\mc{C}(T)$ or $\mc{AC}(T)$ respectively) \defn{between two subsets} is just the minimal path distance between the subsets.
\end{definition}

The following is a well-known relationship between $d_{\mc{AC}}$ and $d_\mc{C}$.

\begin{lemma}\label{Lem: relating AC and C}
If $\ob{T}$ is a bridge surface with $T \in \goodCurves$, then \[d_{\mc{AC}}(T) \leq d_{\mathcal{C}}(T) \leq 2d_{\mathcal{AC}}(T).\]
\end{lemma}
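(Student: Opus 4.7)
The first inequality is essentially immediate. The plan is to observe that the curve complex $\mc{C}(H)$ sits inside the arc-and-curve complex $\mc{AC}(H)$ as a full subcomplex, and that every compressing disk for $H$ qualifies (vacuously, having no boundary arc) as a boundary-compressing disk, so $\mc{H}_\dn \subseteq \mc{D}_\dn$ and $\mc{H}_\up \subseteq \mc{D}_\up$. Hence any geodesic in $\mc{C}(H)$ realizing $d_\mc{C}(H)$ is already a path in $\mc{AC}(H)$ between $\mc{D}_\dn$ and $\mc{D}_\up$, giving $d_\mc{AC}(H) \leq d_\mc{C}(H)$.

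For the second inequality, the plan is to fix a geodesic $\alpha = \alpha_1, \alpha_2, \ldots, \alpha_n$ in $\mc{AC}(H)$ from $\mc{D}_\dn$ to $\mc{D}_\up$ of length $d_\mc{AC}(H) = n-1$, and to form the induced path $\beta = \beta_1, \ldots, \beta_{2n-1}$ in $\mc{C}(H)$ as in Definition~\ref{Induced Path}. By construction $\beta$ has length at most $2(n-1) = 2d_\mc{AC}(H)$, consecutive vertices are disjoint essential loops, and the hypothesis that $\ob{H}$ is not a sphere with four or fewer punctures guarantees that $\mc{C}(H)$ is connected and that the intermediate loops $\beta_{2i}$ (essential curves disjoint from representatives of both $\alpha_i$ and $\alpha_{i+1}$) exist. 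Once the two endpoints $\beta_1$ and $\beta_{2n-1}$ are shown to lie in $\mc{H}_\dn$ and $\mc{H}_\up$ respectively, the bound $d_\mc{C}(H) \leq 2d_\mc{AC}(H)$ follows.

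The substantive step is therefore the endpoint condition. If $\alpha_1$ is a curve then $\alpha_1$ already lies in $\mc{H}_\dn$ and $\beta_1 = \alpha_1$. The interesting case is when $\alpha_1$ is an arc, representing the surface arc of some boundary-compressing disk $D$ for $H$ on the $\dn$ side with $\partial D = \alpha_1 \cup b$ and $b \subset \partial M(L)$. The plan is the standard doubling construction: take a parallel pushoff $D'$ of $D$, band $D$ and $D'$ together along a thin rectangle in $\partial M(L)$ that is a neighborhood of $b$ (with its two short sides lying on the meridian circles containing the endpoints of $\alpha_1$), and then push those short sides slightly off $\partial M(L)$ into the interior of $H$. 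The result is a properly embedded disk in $H_\dn \cap M(L)$ whose boundary is isotopic in $H$ to the frontier loop $\beta_1 = \partial N(\alpha_1)$, so $\beta_1 \in \mc{H}_\dn$. The symmetric argument handles $\beta_{2n-1}$. The hardest part of the plan is arranging the doubling cleanly in the degenerate case where $\alpha_1$ has both endpoints on the same puncture component of $\partial H$, so that the band along $b$ remains embedded; once this is verified, the rest of the argument is formal.
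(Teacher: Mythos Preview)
Your proposal is correct and follows the same approach as the paper: both derive the lemma directly from the induced-path construction of Definition~\ref{Induced Path}, and the paper in fact gives no further argument beyond asserting that induced paths yield the inequality. Your write-up is more detailed than the paper's, supplying the doubling construction to verify that the endpoints $\beta_1,\beta_{2n-1}$ of the induced path actually land in $\mc{H}_\dn,\mc{H}_\up$; this is exactly the point the paper leaves implicit.

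Two minor remarks. First, your parenthetical that ``every compressing disk \ldots\ qualifies (vacuously, having no boundary arc) as a boundary-compressing disk'' is not quite how the containment $\mc{H}_\dn\subseteq\mc{D}_\dn$ arises: the paper simply \emph{defines} $\mc{D}_\dn$ to consist of the vertices coming from both compressing and boundary-compressing disks, so the inclusion is by definition rather than by any vacuous reinterpretation. Second, the degenerate case you flag (both endpoints of $\alpha_1$ on the same component of $\partial H$) is indeed not handled explicitly by the paper either---its description of $\beta_{2i-1}$ as bounding ``a twice-punctured disc'' tacitly assumes distinct endpoints---so your caution there is well placed; the standard fix (choosing an essential component of the frontier and capping with the appropriate band on the boundary torus) works, and you could simply note this rather than leave it as a loose end.
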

\begin{proof}
Suppose that $T \in \goodCurves$. The first inequality is clear from the definitions. Consider a minimal length path $\alpha = \alpha_1, \alpha_2, \cdots, \alpha_n$ in $\mc{AC}(T)$ between disc sets in $\mc{AC}(T)$. It has length $n-1$. This path induces a path $\beta$ in $\mc{C}(T)$ of length at most twice the length of $\alpha$. Here is the construction: If $\alpha_i$ is represented by a loop, we let $\beta_{2i-1} = \alpha_i$. If $\alpha_i$ is represented by an arc, we let $\beta_{2i-1}$ be the loop which is a frontier in $\ob{T}$ of a regular neighborhood of an arc representing $\alpha_i$ (so that $\beta_{2i-1}$ bounds in $\ob{T}$ a disc with two marked points). If $\beta_{2i-1}$ and $\beta_{2i + 1}$ are isotopically disjoint, we let $\beta_{2i} = \beta_{2i-1}$. If $\beta_{2i-1}$ and $\beta_{2i + 1}$ are not isotopically disjoint, then $\alpha_i$ and $\alpha_{i+1}$ are represented by arcs in $T$ having disjoint interiors and sharing one or both endpoints. There is an essential loop $\beta_{2i}$ which is disjoint from the arc representing both $\alpha_i$ and $\alpha_{i+1}$. The path $\beta = \beta_1, \beta_2, \hdots, \beta_{2n-1}$ is then (after replacing a loop with its isotopy class) a path in $\mc{C}(T)$ of length at most twice the length of $\alpha$. Since $\alpha_1$ bounds either a bridge disc or a compressing disc on one side of $\ob{T}$, $\beta_1$ will bound a compressing disc on the same side of $\ob{T}$. Similarly, since $\alpha_n$ bounds a bridge disc or compressing disc on the other side of $\ob{T}$, the loop $\beta_{2n-1}$ will bound a compressing disc on that same side. Hence, $\beta$ is a path in $\mc{C}(T)$ between the disc sets in $\mc{C}(T)$ and so $d_\mc{C}(T) \leq 2d_\mc{AC}(T)$.
\end{proof}

In the proof of the previous lemma, we call $\beta$ a path in $\mc{C}(T)$ \defn{induced} by the path $\alpha$ in $\mc{AC}(T)$. See Figure~\ref{Fig: InducedPath} for an example.

\begin{figure}
\labellist \small\hair 2pt
\pinlabel {$1$} [b] at 102 225
\pinlabel {$2$} [b] at 167 225
\pinlabel {$3$} [b] at 336 190
\pinlabel {$4$} [t] at 336 244
\pinlabel {$1$} [b] at 101 92
\pinlabel {$2$} [b] at 122 119
\pinlabel {$3, 4$} [t] at 190 93
\pinlabel {$5$} [b] at 329 26
\pinlabel {$6$} [bl] at 392 129
\pinlabel {$7$} [t] at 332 128
\endlabellist
\centering
\includegraphics[scale=.6]{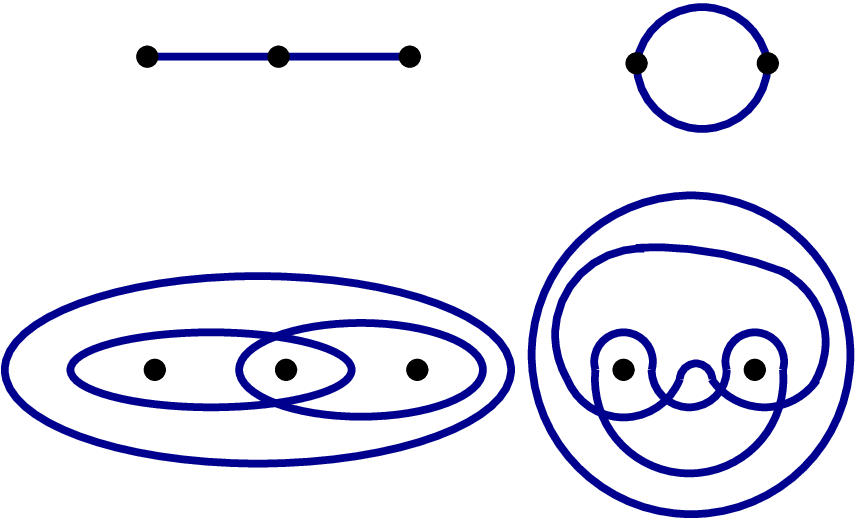}
\caption{The top row shows a path in $\mc{AC}$ and the second row shows the induced path in $\mc{C}$.}
\label{Fig: InducedPath}
\end{figure}

It follows from Lemma \ref{Lem: relating AC and C} and our previous remarks that if $L$ is a knot in a closed, orientable, irreducible 3-manifold $M$ then $d_\mc{AC}(L)$ is attained by $d_\mc{AC}(\ob{T})$ for some bridge surface $\ob{T}$ for $(M,L)$.

\section{Constructing paths}
\label{sec:essential surfaces}

As before, let $M$ be a compact, orientable manifold containing a properly embedded link $L$ and let $N=M(L)$. Let $S \subset N$ be a properly embedded compact orientable surface that is not necessarily connected. Let $\boundary_L N$ be the non-empty union of all components of $\boundary N \setminus \boundary M$ which have non-trivial intersection with $\boundary S$. (In this section, we do not require that $\boundary_L N = \boundary N \setminus \boundary M$.) Let $S$ have boundary slope $\sigma$ on $\boundary_L N$ and let $T$ be a thick surface of a sloped generalized Heegaard surface for $N$ such that $T$ has boundary slope $\tau$ on $\boundary_L N$. We assume $\Delta(\sigma,\tau) > 0$. We let $\boundary_L S$ denote the components of $\boundary S$ lying in $\boundary_L N$ and we let $\boundary_0 S=\boundary S \setminus \boundary_L S$. We define $\boundary_L T$ and $\boundary_0 T$ similarly for consistency but note that $\boundary T=\boundary_L T$ as $T$ is a thick surface of a sloped generalized Heegaard surface for $N$.




In this section, we prove a result which is key to our distance bounds. In order to apply the result in a number of different situations, we state it as generally as possible. We begin with two definitions.

The first definition is essentially that of  ``Morse position'' for the surface $S$ with respect to the sweepout by $T$. Given a sweepout $\phi \co M(L) \to [0,1]$, if $S \subset M(L)$ is a properly embedded surface, then we can perturb $\phi|_S\co S \to [0,1]$ to be a Morse function. Then for all regular values $t \in (0,1)$, $\phi^{-1}(t) \cap S$ is a properly embedded 1--manifold in $S$. For $t = \{0,1\}$, the set $\phi^{-1}(t) \cap S$ is the union of components of $\boundary S \cap \boundary M(L)$ with the points where $S$ intersects the spine $\phi^{-1}(t) \setminus \boundary_0 M(L)$. In addition to the requirement that $\phi|_S$ be Morse, we need a few other constraints, one of which concerns the possibility of $\phi|_S$ having more than one critical point at a particular height. When $S$ is an essential surface, we need not worry about this. However, when $S$ is a thick surface we will need to use a ``graphic'' argument. That argument requires us to consider the possibility that there may be two critical points at some height, as in the definition below.

\begin{definition}\label{Defn: adapted}
A (partial) sweepout $\phi$ of $N$ with level surfaces $T_t = \phi^{-1}(t)$ is \defn{adapted} to $S$ if all of the following hold:
\begin{enumerate}
\item $\boundary T_t$ intersects $\boundary S$ minimally for all $t \in (0,1)$.
\item The restriction $\phi|_S$ is Morse (i.e., has only non-degenerate critical points).
\item For all but at most one critical value $v$ in $(0,1)$, there is a unique critical point of $\phi|_S$ with critical value $v$.
\item There may be a single critical value $v \in (0,1)$, such that there are two critical points of $\phi|_S$ with critical value $v$ and there are no critical values corresponding to more than two critical points.
\item If $c_1$ and $c_2$ are distinct critical points of $\phi|_S$ with the same height $v$, then, for sufficiently small values of $\epsilon$, after canonically identifying $T_{v-\epsilon}$ and $T_{v+\epsilon}$ with $T$, every component of $T_{v-\epsilon} \cap S$ can be isotoped in $T$ to be disjoint from every component of $T_{v+\epsilon} \cap S$.
\end{enumerate}
\end{definition}

The next definition hones in on the important values of the sweepout which allow us to infer the topology of $S$ from the structure of the critical points. It is related to the idea of ``mutuality'' from \cite{BS}.
\begin{definition}
Suppose that $\phi$ is a sweepout of $N$ adapted to $S$ with level surfaces $T_t$. An interval $[a,b] \subset (0,1)$ with $a < b$ is \defn{essential} with respect to $\phi$ if all of the following hold:
\begin{itemize}
\item $a$ and $b$ are regular values of $\phi|_S$,
\item every arc of $T_t \cap S$ is essential in both $T_t$ and $S$ for all regular values $t \in [a,b]$
\end{itemize}
The interval $[a,b]$ is \defn{maximally essential} with respect to $\phi$ if it is essential and if  there is an interval $[u,v] \supset [a,b]$ with $u$ and $v$ regular values of $\phi|_S$ such that there is exactly one critical value of $\phi|_S$ in each of the intervals $[u,a]$ and $[b,v]$ and, additionally, there is an arc or circle of $T_u \cap S$ that is essential in $T_u$ but bounds a disc in $(T_u)_\dn$ and there is an arc or circle of $T_v \cap S$ that is essential in $T_v$ but bounds a disc in $(T_v)_\up$. See Figure~\ref{Fig: EssInt}. When the sweepout is obvious from context, we will simply say that $[a,b]$ is essential or maximally essential.
\end{definition}

\begin{figure}
\labellist \small\hair 2pt
\pinlabel {$v$} [l] at 299 344
\pinlabel {$b$} [l] at 299 291
\pinlabel {$u$} [l] at 299 23
\pinlabel {$a$} [l] at 300 92
\endlabellist
\centering
\includegraphics[scale=.4]{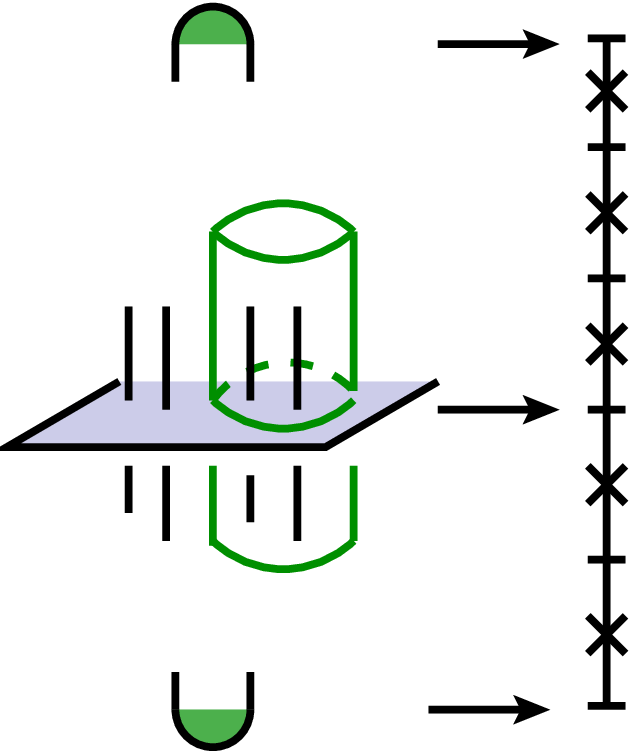}
\caption{A schematic depiction of maximally essential interval. The ``x''s represent the critical values of $\phi|_S$.}
\label{Fig: EssInt}
\end{figure}

Here is the theorem which is key to our results:

\begin{theorem}\label{thm:distanceL}
Let $\phi$ be a  sweepout of $N$ by $T$ adapted to $S$ with maximally essential interval $[a,b]$. Assume that no component of $S$ is a sphere or disc. Then:
\begin{equation}\label{Ineq: Arc Bound}
\Delta b_{\min}(T)(d_{\mathcal{AC}}(T) - 2) \leq \left(\frac{4g(S) -4|S| + 2|\boundary_0 S|}{|\boundary_L S|}\right) + 2
\end{equation}
and
\begin{equation}\label{Ineq: Curve Bound}
\Delta b_{\min}(T)(d_{\mathcal{C}}(T) - 4) \leq \left(\frac{8g(S) -8|S| + 4|\boundary_0 S|}{|\boundary_L S|}\right) + 4,
\end{equation}
where $\Delta$ denotes the intersection number of the multislopes represented by $\partial_L S$ and $\partial_L T$.
\end{theorem}

In addition to the assumptions established at the start of this section, for the remainder of this section we also assume that $\phi$ is a sweepout by $T$ adapted to $S$ and with maximally essential interval $[a,b]$ and that no component of $S$ is a disc or sphere.

The basic outline of the proof of Theorem \ref{thm:distanceL} is as follows: Let $T_t$ be the level surface of $\phi$ at height $t \in (0,1)$. Since $\phi$ is adapted to $S$, as $t$ varies from $a$ to $b$, the intersection $T_t \cap S$ is a collection of arcs and circles whenever $t$ is a regular value of $\phi|_S$. For heuristic purposes, assume that all these arcs and circles are essential in both $S$ and $T$ (this is close to being the case since the interval is essential). Then each critical point of $\phi|_S$ is a saddle singularity on $S$. These saddles are ``essential'' and so contribute to $-\chi(S)$ (as observed by Bachman and Schleimer \cite{BS}).  This observation is formalized in Lemma~\ref{lem: lower bound on neg euler}.

As $T_t$ passes through a saddle singularity of $S$, the isotopy classes in $T$ of the intersection arcs may change. We show that the total number of times we have such a change is at least the product of the length of a certain path in $\mc{AC}(T)$ with quantities involving the bridge number and $|\boundary_L S|$. This is formalized in Lemma~\ref{Lem: Path lemma}. The length of the path is related in a straightforward way to $d_\mc{AC}(T)$ and $d_\mc{C}(T)$. The fact that $|\boundary_L S|$ appears in the inequality allows us to obtain an inequality involving just the genus of $S$ and not merely its euler characteristic.

Before tackling the two parts of the proof, we make a few remarks and definitions.

If $T \not\in \goodArcs$, then $d_\mc{AC}(T)=-\infty$ and Inequality \eqref{Ineq: Arc Bound} holds trivially. Henceforth, we assume that if we are considering $\mc{AC}(T)$, then $T \in \goodArcs$. Similarly, if $T \not\in \goodCurves$, then $d_\mc{C}(T) = - \infty$ and  Inequality \eqref{Ineq: Curve Bound} holds trivially. Henceforth, we assume that if we are considering $\mc{C}(T)$, then $T \in \goodCurves$.

Recall that, for each regular value $t$, $|\boundary_L T_t \cap \boundary_L S| = |\boundary_L T \cap \boundary_L S|$ and there is a canonical identification of the points of $\boundary_L T_t \cap \boundary_L S$ with the points of $\boundary_L T \cap \boundary_L S$. Henceforth, we do not distinguish between the points in $\boundary_L T_t \cap \boundary_L S$ and the points in $\boundary_L T \cap \boundary_L S$.

Let $v_1 < v_2 < \hdots < v_k$ be the critical values of $\phi|_S$ in $[a,b]$ and let $a = t_0 < t_1 < \hdots < t_k = b$ be regular values of $\phi|_S$ so that $v_i$ is the unique critical value of $\phi|_S$ in the interval $[t_{i-1}, t_i]$. Define a point $\lambda \in \boundary_L T \cap \boundary_L S$ to be \defn{active} at $v_i$ if as $t$ varies from $t_{i-1}$ to $t_i$ the isotopy class of an arc in $S$ with an endpoint $\lambda$ changes. The arc of $T_{t_{i-1}} \cap S$ with an endpoint $\lambda$ is said to be \defn{pre-active} at $v_i$ and the arc of $T_{t_i} \cap S$ with an endpoint $\lambda$ is said to be \defn{post-active} at $v_i$. An arc is \defn{active} if it is pre-active or post-active at some $v_i$. Let $Q$ be the total number of post-active arcs in $S$ (where the count is taken over all critical values $v_i$).  The number $Q$ is also half the total number of times that points in $\boundary_L T \cap \boundary_L S$ are active (observing that a point may be active more than once). A critical value $v_i$ is \defn{active} if there is an active arc at $v_i$. A critical point $p$ is \defn{active} if $\phi(p)$ is active. We note that, by the definition of essential interval, the pre-image of an active critical value contains an index 1 critical point. Let $\mc{C}$ be the set of active critical points.

Passing through the critical points at height $v_i$ results in some arcs and circles of $T_{t_{i-1}} \cap S$ being banded to other arcs and circles. The result of the banding are the arcs and circles of $T_{t_i} \cap S$. Define a circle component of $T_{t_{i-1}} \cap S$ to be \defn{pre-active} at $v_{i}$ if it is banded to a pre-active arc at $v_{i}$. Define it to be \defn{post-active} at $v_i$ if it is banded to a post-active arc at $v_i$. A circle is \defn{active} if it is pre-active or post-active at some $v_i$. By the definition of ``pre-active'' arc and ``post-active'' arc, each active circle is essential in $S$. See Figure~\ref{fig:saddles} for examples of active critical points and active circles.

\begin{figure}[htb]
\labellist
\small\hair 2pt
\pinlabel $p$ [tl] at 143 136
\pinlabel $p$ [l] at 420 119
\pinlabel $\alpha_-$ [tl] at 206 35
\pinlabel $\alpha_-$ [tl] at 530 21
\pinlabel $\alpha_+$ [bl] at 224 219
\pinlabel $\alpha_+$ [bl] at 349 189
\pinlabel $\gamma$ [br] at 446 231
\endlabellist
\centering
  \includegraphics[width=4.5in]{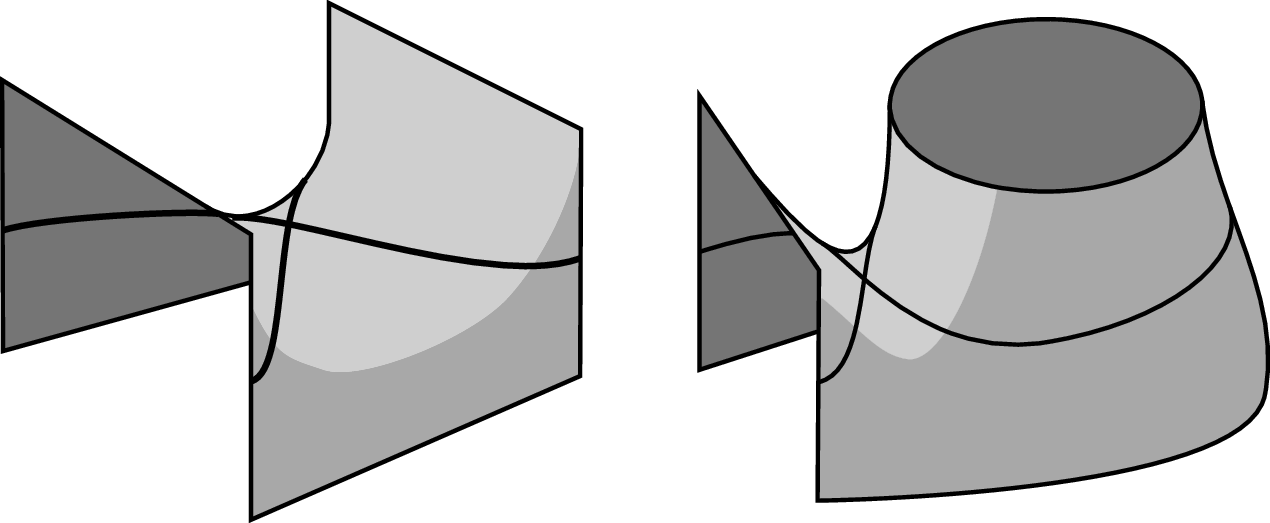}
  \caption{Two examples of index 1 critical points on $S$. In each example, the vertical line segments correspond to portions of $L$. The top and bottom of each portion of $S$ are cut off by portions of $T_{t_i}$ and $T_{t_{i-1}}$ respectively. In each example, if the arcs $\alpha_-$ and $\alpha_+$ are not isotopic, then $\alpha_-$ is pre-active at $\phi(p)$ and $\alpha_+$ is post-active at $\phi(p)$. On the right, if the arcs $\alpha_-$ and $\alpha_+$ are not isotopic, then the circle $\gamma$ is also post-active.}
  \label{fig:saddles}
\end{figure}

Suppose that $c_1$ and $c_2$ are distinct index 1 critical points both having height $v_i$. As $t$ varies from $t_{i-1}$ to $t_i$, two disjoint bands are attached to arcs and circles of $T_{t_{i-1}} \cap S$ to produce $T_{t_i} \cap S$. Say that $c_1$ and $c_2$ are \defn{connected} if  the union of the pre-active arcs and circles with the two bands is connected. See Figure~\ref{fig:connectedcp}.

\begin{figure}[htb]
\labellist \small\hair 2pt
\pinlabel {$\boundary T$} at 26 340
\pinlabel {$\boundary T$} at 26 219
\pinlabel {$\boundary T$} at 26 110
\pinlabel {$\boundary T$} at 217 340
\pinlabel {$\boundary T$} at 217 219
\pinlabel {$\boundary T$} at 217 110
\endlabellist
\centering
\includegraphics[scale=.38]{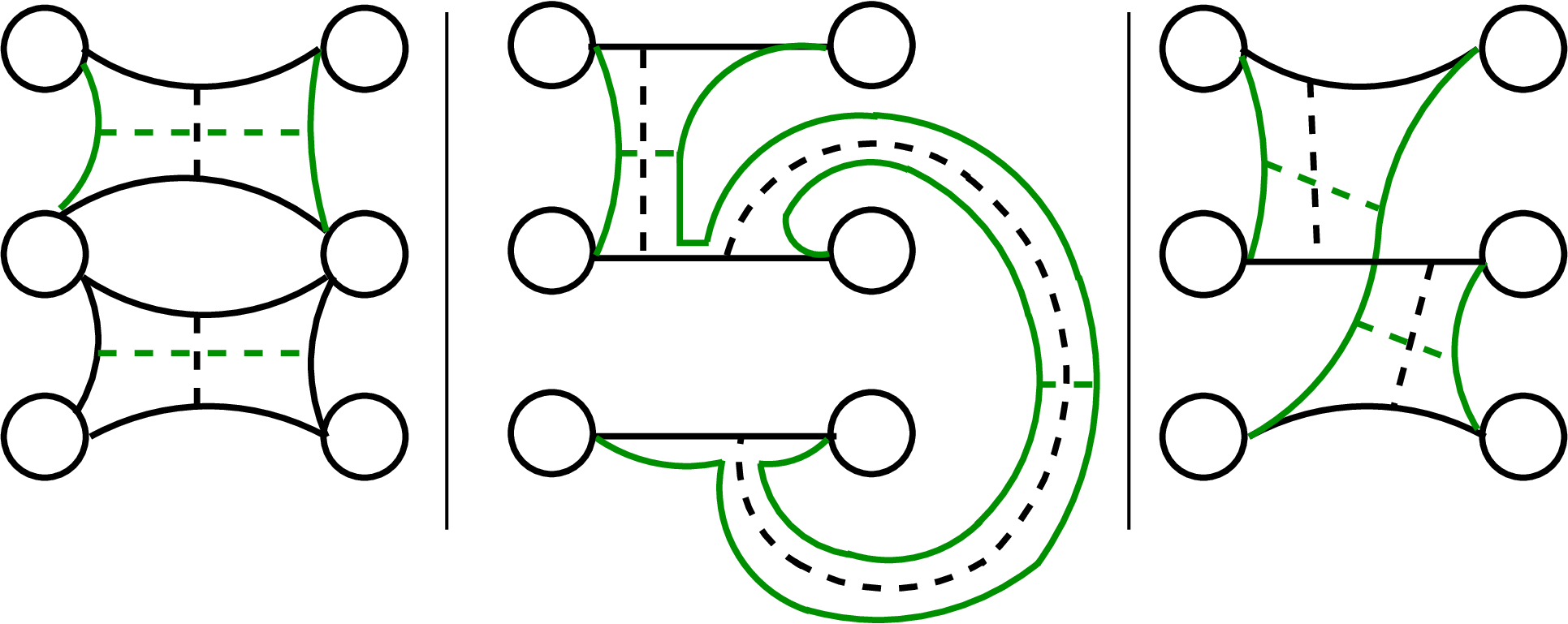}
\caption{Three examples of distinct critical points at the same height. The solid arcs which are horizontal and, in the online version, black arise from the intersection $T_{t_{i-1}} \cap S$. The solid arcs which are not horizontal and, in the online version, are green arise from the intersection $T_{t_i} \cap S$. The dashed lines represent the bands which we attach to move from the arcs of intersection before the critical value to the arcs of intersection after the critical value. The critical points lie at the intersection of the dashed lines. The critical points on the left are not connected and do satisfy condition (5) of Definition \ref{Defn: adapted}. The critical points in the middle example are connected and also satisfy condition (5) of Definition \ref{Defn: adapted}. The critical points on the right are connected and do not satisfy condition (5) of Definition \ref{Defn: adapted}.}
\label{fig:connectedcp}
\end{figure}

We now tackle the first part of the proof of Theorem~\ref{thm:distanceL}.
\begin{lemma}\label{lem: lower bound on neg euler} We have:
\[Q \leq -2\chi(S) \] where $Q$ is the total number of post-active arcs.
\end{lemma}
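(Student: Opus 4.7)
The plan is to bound $Q$ by proving two inequalities and combining them: (i) $Q \leq 2|\mc{C}|$ (each active critical point contributes at most two post-active arcs), and (ii) $|\mc{C}| \leq -\chi(S)$ (active saddles are essential and so controlled by the topology of $S$).

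For (i), the observation immediately preceding the lemma states that, by the essential interval hypothesis, each active critical value of $\phi|_S$ contains an index 1 interior critical point---that is, a saddle. At a saddle, the level-set surgery is a single band move on $T_{t_{i-1}} \cap S$: a 1-handle is attached along two small arcs meeting at most two components of the 1-manifold. This surgery produces at most two new components (arcs or circles), so at most two arcs of $T_{t_i} \cap S$ can have a new isotopy class in $S$. Hence each saddle contributes at most two post-active arcs. Counting the two saddles at the exceptional critical value separately and summing over $\mc{C}$ yields $Q \leq 2|\mc{C}|$.

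For (ii), I invoke the Bachman--Schleimer philosophy referenced just before the lemma: essential saddles contribute to $-\chi(S)$. The essential interval condition ensures that every arc of $T_t \cap S$ for $t \in [a,b]$ is essential in $S$ and that every essential circle of $T_t$ is essential in $S$. Thus any index 0 critical point of $\phi|_S$ in $[a,b]$ creates a small circle that bounds a disk in $S$, and by the essential interval condition this circle is also inessential in $T_t$; an analogous statement holds for index 2 critical points. A standard Morse handle-cancellation then pairs each such extremum with a matching inactive saddle (or another extremum through an inactive saddle), leaving only active saddles together with the genuine topology of $S$. Applying the identity $\chi(S) = \#(\text{index 0}) - \#(\text{index 1}) + \#(\text{index 2})$ after cancellations gives $|\mc{C}| \leq -\chi(S)$. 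Combining with (i) yields $Q \leq 2|\mc{C}| \leq -2\chi(S)$.

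The main obstacle is the rigorous handle-cancellation argument in (ii). It must account for the critical points of $\phi|_{\boundary S}$ arising from the interaction with $\boundary_L N$, for the two saddles at the exceptional critical value (which cannot always be separated in height but whose pre-/post-active components are isotopy-disjoint by the adapted-sweepout definition), and for chains of extrema creating and destroying nested inessential circles. The definition of adapted sweepout---in particular the minimality of $\boundary T_t \cap \boundary S$ and the isotopy-disjointness condition at the exceptional value---is designed precisely to make this cancellation systematic.
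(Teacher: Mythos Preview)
Your part (i) is correct and matches the paper's Observation~1 exactly.

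Part (ii), however, has a genuine gap that you yourself flag but do not resolve. The ``handle-cancellation'' argument is not standard and does not go through as stated. First, the Morse identity $\chi(S)=c_0-c_1+c_2$ applies to a Morse function on all of $S$ (with boundary modifications), but $\mc{C}$ counts only \emph{active} saddles lying in the essential interval $[a,b]$; there are critical points of $\phi|_S$ outside $[a,b]$, and you give no mechanism to absorb them. Second, even inside $[a,b]$ there is no reason an extremum must pair with an \emph{inactive} saddle: an inactive saddle is one at which no arc changes isotopy class, but it may still band two essential circles, or band a circle to itself, and thus be topologically essential and un-cancellable. Third, an inactive saddle need not be adjacent (in the Morse sense of a single connecting flow line) to any extremum at all. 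The adapted-sweepout conditions (minimal boundary intersection, isotopy-disjointness at the exceptional value) do not supply the missing pairing; they control the \emph{active} configurations, not the inactive ones.

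The paper avoids Morse cancellation entirely. It cuts $S$ along the union of all post-active arcs and circles into pieces $P_1,\dots,P_m$, and defines an index $J(P_k)=b_k/2-\chi(P_k)$, where $b_k$ is the number of active arcs in $\partial P_k$. Since cutting along an arc raises Euler characteristic by one, $\sum_k J(P_k)=-\chi(S)$ exactly. The work is then a case analysis showing $J(P_k)\geq 0$ always, $J(P_k)\geq 1$ when $P_k$ contains one active saddle, and $J(P_k)\geq 2$ when it contains the two connected active saddles at the exceptional height; this uses essentiality of active arcs and circles in $S$ to rule out disk or low-index configurations. Summing gives $|\mc{C}|\leq -\chi(S)$ without any appeal to cancellation, and without needing to account for critical points outside $[a,b]$ or boundary Morse theory. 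This decomposition-and-index argument is the missing idea in your proposal.
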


In the proof of Lemma \ref{lem: lower bound on neg euler}, we relate $Q$, which is the number of post-active arcs, to the euler characteristic of $S$ by showing that active critical points contribute in a lasting way to $\chi(S)$. We need to focus on active critical points, as  a non-active index 1 critical point may have its contribution to $\chi(S)$ cancelled out by an index 0 critical point, for example.

\begin{proof}
If $c$ is the unique index 1 critical point with height $v_i \in [a,b]$, then there are at most 2 post-active arcs in $T_{t_i} \cap S$. If $c_1$ and $c_2$ are index 1 critical points both having height $v_i$, then there are at most 4 post-active arcs in $T_{t_i} \cap S$. (If, as $t$ passes through $v_i$, the two bands are attached to four distinct arc components of $T_{t_{i-1}} \cap S$ then there are four post-active arcs at $v_i$. If any of the four ends of the two bands are adjacent to the same component of $T_{t_{i-1}} \cap S$, then there will be fewer than 4 post-active arcs.) In $S$, the post-active arcs at $T_{t_i} \cap S$ are (pairwise) disjoint from the post-active arcs of $T_{t_{j}} \cap S$, for $j \neq i$, since $T_{t_i} \cap T_{t_j} = \nil$. Thus, we have:

\textbf{Observation 1:} $Q \leq 2|\mc{C}|$ where $\mc{C}$ is the set of active critical points.

Let $P$ be the complement of an open regular neighborhood of the active arcs and circles in $S$. Denote its components by $P_1, \hdots, P_m$. The boundary of each $P_k$ is the union of copies of active arcs and circles as well as arcs or circles lying in $\boundary S \subset \boundary N$. Each $P_k$ contains at most 2 active index 1 critical points of $\phi|_S$, and there is at most one $P_k$ that contains 2 active index 1 critical points. Let $b_k$ be the number of copies of active arcs in $\boundary P_k$ and define the \defn{index} of $P_k$ to be:
 \[
 J(P_k) = (b_k/2) - \chi(P_k).
 \]
Since each active arc shows up twice in $\boundary P$ and since euler characteristic increases by one when cutting along an arc, we have:

\textbf{Observation 2:} $-\chi(S) = \sum_k J(P_k)$

Fix $k$. Since no component of $S$ is a sphere or disc, $P_k$ is not a sphere or disc disjoint from $\boundary N$.  If $P_k$ is a disc, its boundary cannot be an active circle or contain only a single active arc as active circles and arcs are essential in $S$. Thus, if $P_k$ is a disc, $J(P_k) \geq 0$. If $P_k$ is not a disc, $-\chi(P_k) \geq 0$. Therefore $J(P_k) \geq 0$, for all $k$. In particular, $J(P_k) \geq 0$ if $P_k$ does not contain an active critical point.

Suppose that $\alpha$ is a pre-active arc at an active critical value $v_i$. The post-active arcs at $v_i$ are obtained by either banding $\alpha$ to another pre-active arc $\beta$ at $v_i$, in which case we say that $\alpha$ is \defn{paired}, or by banding $\alpha$ to itself or to a circle component of $T_{t_{i-1}} \cap S$, in which case we say that $\alpha$ is \defn{solitary}.

Suppose $P_k$ contains a unique active critical point $c \in P_k$ and let $\alpha$ be a pre-active arc at $c$. If $\alpha$ is a paired arc at $c$, then $b_k \geq 4$ since there must be at least two pre-active arcs and two post-active arcs. Then $J(P_k) \geq (4/2) - 1 = 1$. If $\alpha$ is a solitary pre-active arc at $c \in \mc{C} \cap P_k$, then let $\gamma$ be the pre-active circle that is either banded to $\alpha$ at $c$ or that results from banding $\alpha$ to itself at $c$. In this case, $\gamma$ is essential in $S$ so $P_k$ is not a disc and there are two active arcs in the boundary of $P_k$ so $J(P_k) \geq (2/2) - 0 = 1$. Hence, if $P_k$ contains a single active index 1 critical point, then $J(P_k) \geq 1$.

The analysis when $P_k$ contains two active index 1 critical points is similar, but somewhat more delicate. In this case, the index 1 critical points must have the same height $v$ and be connected.  Note that all post-active arcs and circles at $v$ and all pre-active arcs and circles at $v$ are contained in $\partial P_k$. Furthermore, every component $P_k$ that contains a critical point contains at least two active arcs in its boundary. Additionally, the bands must lie in $P_k$ (since the bands, themselves, contain the index 1 critical points). Let $C_-$ and $C_+$ denote the number of pre-active and post-active circles at $v$, respectively. Then if $P_k$ is a disc, we have that $C_- = C_+ = 0$ and $b_k$ must be 6, in which case $J(P_k) = 2$.  Suppose that $P_k$ is not a disc. Then either $C_- + C_+ \geq 2$ and $b_k \geq 2$; $C_- + C_+ \geq 1$ and $b_k \geq 4$; or $b_k = 2$ and $P_k$ is not planar. In any of these cases, $J(P_k) \geq 2$. Consequently:

\textbf{Observation 3:} $\sum J(P_k) \geq |\mc{C}|$.

Combining Observations 1, 2, and 3 completes the proof.
\end{proof}

We now show how to relate $Q$ to paths in $\mc{AC}(T)$. Before beginning, we take note of the following arithmetic:  Summing over the components $V$ of $\boundary_L N$ we have, by the definition of $\Delta$,
\[\begin{array}{rcl}
|\boundary T \cap \boundary S| &\geq& \sum_V |\boundary T \cap V| |\boundary S \cap V|\Delta \\
&\geq& \sum_V 2b_{\min}(T)|\boundary S \cap V|\Delta \\
&=& 2b_{\min}(T)|\boundary_L S|\Delta
\end{array}
\]
\begin{lemma}\label{Lem: Path lemma}
There is a path
\[\alpha = \alpha_0, \alpha_1, \hdots, \alpha_\ell \]
 in $\mc{AC}(T)$ from the set of vertices represented by the arcs of $T_a \cap S$ to the set of vertices represented by the arcs of $T_b \cap S$ such that:
\begin{enumerate}
\item Each $\alpha_i$ is represented by an arc of $T_{t_i} \cap S$.
\item Letting the length of $\alpha$ be $\ell$, we have
\[
\ell b_{\min}(T)|\boundary_L S|\Delta \leq Q
\]
\end{enumerate}
\end{lemma}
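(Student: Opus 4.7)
The plan is to fix a single label on $\boundary T$ that is active at few critical values, then follow the arc through the sweep-out having that label as an endpoint. For each label $\lambda \in \mc{L}$ and each $i \in \{0, 1, \ldots, k\}$, let $A_i^\lambda$ denote the unique arc of $T_{t_i} \cap S$ whose boundary contains the point with label $\lambda$. Since $[a,b]$ is essential, $A_i^\lambda$ is essential in both $T_{t_i}$ and $S$ and so represents a vertex of $\mc{AC}(T)$.

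Next, I will verify that consecutive entries $A_{i-1}^\lambda$ and $A_i^\lambda$ are either the same vertex or adjacent in $\mc{AC}(T)$. If $\lambda$ is not active at $v_i$, then by definition their isotopy classes in $T$ agree. If $\lambda$ is active, the two arcs share the endpoint $\lambda$ and are related by the band surgery produced by the index-$1$ critical point at height $v_i$; in the local Morse model, the pre-active and post-active arcs cobound a disk in $T$, and a small isotopy makes them disjoint in their interiors. Collapsing consecutive duplicates in the sequence $A_0^\lambda, A_1^\lambda, \ldots, A_k^\lambda$ yields a path in $\mc{AC}(T)$ from a vertex represented by an arc of $T_a \cap S$ to a vertex represented by an arc of $T_b \cap S$, whose length $l_\lambda$ is at most $q_\lambda$, the number of critical values in $[a,b]$ at which $\lambda$ is active.

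To choose a favorable $\lambda$, I will average over labels. As noted immediately before the lemma, each post-active arc has exactly two active endpoints, so $\sum_{\lambda \in \mc{L}} q_\lambda = 2Q$, and $|\mc{L}| = |\boundary T \cap \boundary S| \geq |\boundary_L T| \cdot \Delta \cdot |\boundary_L S| = 2 b(T)\,\Delta\,|\boundary_L S|$. Hence some label $\lambda^*$ satisfies
\[
q_{\lambda^*} \;\leq\; \frac{2Q}{|\mc{L}|} \;\leq\; \frac{Q}{b(T)\,\Delta\,|\boundary_L S|}.
\]
Taking $\alpha$ to be the path produced by $\lambda^*$, condition (3) follows from $l \leq q_{\lambda^*}$; condition (2) holds because every arc on the path contains the fixed point $\lambda^*$, which lies on a single component of $\boundary T$; and condition (1) is built into the construction (after relabeling the indices of the regular values to match those of the collapsed path).

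The main technical obstacle will be the band-move step: checking rigorously that $A_{i-1}^{\lambda^*}$ and $A_i^{\lambda^*}$ admit representatives in $T$ that have disjoint interiors whenever $\lambda^*$ is active at $v_i$. The single-saddle case is controlled by the local Morse model, and the exceptional case with two index-$1$ critical points at the same height is handled by clause (5) of Definition~\ref{Defn: adapted}, which states exactly that the before and after intersection patterns can be made mutually disjoint in $T$.
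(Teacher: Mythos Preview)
Your proof is correct and essentially identical to the paper's: both select a label of minimal activity (you via averaging, the paper by taking the minimum directly) and track the arc carrying that label through the regular levels, observing that the number of changes bounds the path length. One minor wording issue: the pre- and post-active arcs do not literally ``cobound a disk in $T$''; the disjointness you need follows instead from the fact that the full multicurves $T_{t_{i-1}}\cap S$ and $T_{t_i}\cap S$ differ by a single band move and therefore have disjoint representatives in $T$ (with the two-saddle case covered, as you note, by clause~(5) of Definition~\ref{Defn: adapted}).
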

\begin{proof}
Since arcs have two endpoints, the number $Q$ is equal to half the total number of times the points in $\boundary_L T \cap \boundary_L S$ are active. For a point $\lambda \in \boundary_L T \cap \boundary_L S$ let $n_\lambda$ denote the number of times it is active. Thus,
\[
\frac{1}{2} \sum_{\lambda \in \boundary_L T \cap \boundary_L S} n_\lambda = Q
\]
Let $\lambda_0$ be a point in $\boundary_L T \cap \boundary_L S$ which minimizes $n_\lambda$. As we saw above, the total number of points in $\boundary_L T \cap \boundary_L S$ is greater than or equal to $2b_{\min}(T)|\boundary_L S|\Delta$

Thus,
\[
n_{\lambda_0}b_{\min}(T)|\boundary_L S|\Delta \leq Q.
\]

Let $\alpha_i$ be the arc in $T_{t_i} \cap S$ with endpoint $\lambda_0$. Then (after taking isotopy classes in $T$) the sequence
\[
\alpha_0, \alpha_1, \hdots, \alpha_n
\]
is a sequence of vertices in $\mc{AC}(T)$. For each $i \in \{0, \hdots, n-1\}$, the vertex $\alpha_i$ is either identical to the vertex $\alpha_{i+1}$ or $\alpha_i$ and $\alpha_{i+1}$ are the endpoints of an edge in $\mc{AC}(T)$. Eliminating repetitions and renumbering, we have our desired path in $\mc{AC}(T)$. It has length $\ell = n_{\lambda_0}$. Thus we have conclusions (1) and (2).
\end{proof}

We can now complete the proof of Theorem~\ref{thm:distanceL}.

\begin{proof}[Proof of Theorem~\ref{thm:distanceL}]

By Lemma~\ref{lem: lower bound on neg euler}, $Q \leq -2\chi(S)$. The interval $[a,b]$ is maximal, so there is an interval $[u,v] \supset [a,b]$ such that $u,v$ are regular values of $\phi|_S$ and so that there is a unique critical value in each of the intervals $(u,a)$ and $(b,v)$. Furthermore, some component $\gamma_u$ of $T_u \cap S$ is essential in $T_u$ and bounds a disc below $T_u$ and some component $\gamma_v$ of $T_v \cap S$ is essential in $T_v$ and bounds a disc above $T_v$. Each component of $T_a \cap S$ is distance (in $\mc{AC}(T)$) at most 1 from each component of $T_u \cap S$ and each component of $T_b \cap S$ is distance at most 1 from each component of $T_v \cap S$.

Let $\alpha = \alpha_0, \hdots, \alpha_\ell$ be the path in $\mc{AC}(T)$ provided by Lemma~\ref{Lem: Path lemma}. Since $d_{\mc{AC}}(\alpha_0, \gamma_u) \leq 1$ and $d_{\mc{AC}}(\alpha_\ell, \gamma_v) \leq 1$, we have $\ell \geq d_{\mc{AC}}(T) - 2$.

Consequently, by Lemma~\ref{Lem: Path lemma}, we have
\[
(d_\mc{AC}(T) - 2)b_{\min}(T)|\boundary_L S| \Delta \leq -2\chi(S).
\]
Using the fact that \[-2\chi(S) = 4g(S) - 4|S| + 2|\boundary_0 S| + 2|\boundary_L S|,\] we obtain Conclusion (1).

Conclusion (2) follows immediately by applying Lemma \ref{Lem: relating AC and C} to Conclusion (1).
\end{proof}

For convenience we observe that a stronger version of the inequalities also hold when $T$ is weakly $\boundary$-reducible, even in the absence of a maximally essential interval.

\begin{lemma}\label{Lem: T weak red}
Suppose that $T$ is weakly $\boundary$-reducible. Assume that no component of $S$ is a sphere or disc. Then:
\[
\Delta b(T)(d_{\mathcal{AC}}(T) - 2) \leq \frac{4g(S) - 4|S| + 2|\boundary_0 S|}{|\boundary_L S|} + 2
\]
and
\[
\Delta b(T)(d_{\mc{C}}(T) - 4) \leq \frac{8g(S) - 8|S| + 4|\boundary_0 S|}{|\boundary_L S|} + 4
\]
\end{lemma}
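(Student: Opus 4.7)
The plan is to exploit weak $\boundary$-reducibility to force $d_{\mc{AC}}(T) \leq 1$ (and hence $d_{\mc{C}}(T) \leq 2$), which makes the left-hand sides of both inequalities non-positive, while the hypotheses on $S$ force the right-hand sides to be non-negative. Both inequalities then reduce to the trivial ``non-positive $\leq$ non-negative''.

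First I would translate weak $\boundary$-reducibility into a statement about $\mc{AC}(T)$. By definition there are disjoint discs $D_\dn$ and $D_\up$ on opposite sides of $T$, each either a compressing or $\boundary$-compressing disc for $T$. Each such disc determines a vertex of $\mc{AC}(T)$ lying in $\mc{D}_\dn$ or $\mc{D}_\up$: the boundary loop if $D$ is a compressing disc, or the essential arc $D \cap T$ if $D$ is a $\boundary$-compressing disc. Disjointness of the two discs forces disjointness of the corresponding arcs/loops in $T$, so $d_{\mc{AC}}(T) \leq 1$. The assumption that $T$ is not an annulus is what keeps $\mc{AC}(T)$ meaningful here. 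When additionally $T$ is not a four-punctured sphere, Lemma~\ref{Lem: relating AC and C} applies and gives $d_{\mc{C}}(T) \leq 2 d_{\mc{AC}}(T) \leq 2$.

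With these distance bounds in hand, the left-hand sides satisfy
\[
\Delta\, b(T)\bigl(d_{\mc{AC}}(T) - 2\bigr) \;\leq\; -\Delta\, b(T) \;\leq\; 0,
\qquad
\Delta\, b(T)\bigl(d_{\mc{C}}(T) - 2\bigr) \;\leq\; 0.
\]
For the right-hand sides, I would use the identity
\[
-2\chi(S) \;=\; 4g(S) - 4|S| + 2|\boundary_0 S| + 2|\boundary_L S|
\]
to rewrite them as $-2\chi(S)/|\boundary_L S|$ and $-4\chi(S)/|\boundary_L S|$ respectively. The hypothesis that each component of $S$ meets $\boundary_L N$ gives $|\boundary_L S| \geq 1$, and the standing assumption that no component of $S$ is a sphere, combined with the added hypothesis that no component is a disc, forces $\chi(C) \leq 0$ for every component $C$, so $\chi(S) \leq 0$. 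Both right-hand sides are therefore non-negative, and the claimed inequalities follow.

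The only step with any geometric content is the first one: verifying that a weakly $\boundary$-reducing pair of discs really produces adjacent vertices in $\mc{AC}(T)$ on opposite sides, especially when one or both of the discs is a $\boundary$-compressing rather than a compressing disc. This is essentially immediate from the definitions of the disk sets $\mc{D}_\dn$ and $\mc{D}_\up$, and everything else in the argument is bookkeeping with Euler characteristic and an appeal to Lemma~\ref{Lem: relating AC and C}, so I do not expect any serious obstacle.
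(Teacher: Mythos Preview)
Your proposal is correct and follows essentially the same approach as the paper: both use weak $\boundary$-reducibility to get $d_{\mc{AC}}(T)\le 1$ (hence $d_{\mc{C}}(T)\le 2$ via Lemma~\ref{Lem: relating AC and C}), making the left-hand sides non-positive, and both show the right-hand sides are non-negative by ruling out sphere or disc components of $S$. Your direct rewriting of the right-hand side as $-2\chi(S)/|\boundary_L S|$ is slightly cleaner than the paper's contradiction argument, but the content is the same.
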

\begin{proof}
Since $T$ is weakly $\boundary$-reducible, $d_{\mc{AC}}(T) \leq 1$ and, by Lemma~\ref{Lem: relating AC and C}, $d_{\mc{C}}(T) \leq 2$.

Suppose that the first conclusion does not hold. Then
\[
-2 > \frac{4g(S) - 4|S| + 2|\boundary_0 S|}{|\boundary_L S|}
\]
Thus,
\[
0 > 4g(S) - 4|S| + 2|\boundary_0 S| + 2|\boundary_L S|
\]
There must be some component $S'$ of $S$ for which
\[
4 > 4g(S') + 2|\boundary_0 S'| + 2|\boundary_L S'|.
\]
We must have $g(S') = 0$, and
\[
3 \geq  2(|\boundary_0 S'| + |\boundary_L S'|).
\]
Since $S'$ is not a sphere or disc, the right hand side is at least 4, which is a contradiction.

Again we can obtain the second inequality by applying Lemma \ref{Lem: relating AC and C} to the first one.
\end{proof}

By a slight variation of the argument in the proof of Theorem \ref{thm:distanceL}, we can give an alternative upper bound on bridge number of knots as follows.  This will give us our best results on reducing and toroidal surgeries.

\begin{theorem}\label{Bounding Bridge}
Let $\ob{T}$ be a bridge surface for a knot $L$ in a compact orientable manifold $M$ and let $S$ be a properly embedded surface in $N$, no component of which is a disc or sphere. Assume that the slopes induced by $T$ and $S$ have $\Delta > 0$. Assume, also, that $d_\mc{C}(T) \geq 3$, and that there is a maximally essential interval for $S$ relative to $T$. Then,
\[
\Delta(b(T) - 4) \leq \frac{4 g(S) - 4|S| +2 |\boundary_0 S|}{|\boundary_L S|} + 2.
\]
\end{theorem}

\begin{proof}
Recall that $Q$ is the number of post-active arcs, and that by Lemma~\ref{lem: lower bound on neg euler},
\begin{equation}\label{fundamental ineq}
Q \leq -2\chi(S).
\end{equation}

Let $[a,b]\subset [u,v]$ be a maximally essential interval for $S$ relative to $T$. Recall that
\[ v_1 < v_2 < \hdots < v_{n-1}\]
are the critical values of $\phi|_S$ in $(a, b)$ and set $t_0 = a$ and $t_n = b$. Let $t_i = (v_i + v_{i-1})/2$. Let $\alpha$ be the arc or loop in $T_{u} \cap S$ that bounds a compressing or boundary compressing disc for $T_u$ that lies in $\ob{T}_\dn$ and let $\beta$ be the arc or loop in $T_{v} \cap S$ that bounds a compressing or boundary compressing disc for $T_v$ that lies in $\ob{T}_\up$. (In particular $\alpha$ and $\beta$ are essential in $T_u$ and $T_v$ respectively.)

A \defn{constant path} is a sequence of inactive arcs $\kappa_1, \hdots, \kappa_{n}$ with $\kappa_i \subset T_{t_i} \cap S$ and all $\kappa_i$ mutually isotopic in both $S$ and $T$.

Suppose that $(\kappa_i)$ is a constant path and canonically identify each $T_{t}$ with $T$.  Recall that the interior of $\kappa_1$ is disjoint from $\alpha$, the interior of $\kappa_n$ is disjoint from $\beta$, and $\kappa_1$ and $\kappa_n$ are isotopic (relative to their endpoints) in $T$. In particular the isotopy classes of $\alpha$, $\kappa_1$, and $\beta$ are a path of length 2 in $\mc{AC}(T)$. If no component of $\boundary T$ which is incident to $\kappa_1$ is also incident to $\alpha$ or $\beta$, then the induced path $\gamma_0$, $\gamma_1$, and $\gamma_2$ in $\mc{C}(T)$, has length 2, as in Figure \ref{Fig: path}.  However, $\gamma_0 \in \mc{D}_{\dn}$ and $\gamma_2 \in \mc{D}_\up$, so $d_\mc{C}(T) \leq 2$, contradicting the hypotheses of the theorem. Consequently, whenever $(\kappa_i)$ is a constant path, one endpoint of $\kappa_1$ lies in a component of $\boundary T$ adjacent to $\alpha$ or $\beta$.

\begin{figure}[ht]
\labellist \small\hair 2pt
\pinlabel {$\alpha$} [b] at 356 490
\pinlabel {$\beta$} [t] at 356 52
\pinlabel {$\kappa_1$} [l] at 306 312
\pinlabel {$\gamma_0$} [bl] at 477 517
\pinlabel {$\gamma_1$} [t] at 378 293
\pinlabel {$\gamma_2$} [br] at 64 499
\endlabellist
\centering
\includegraphics[scale=0.5]{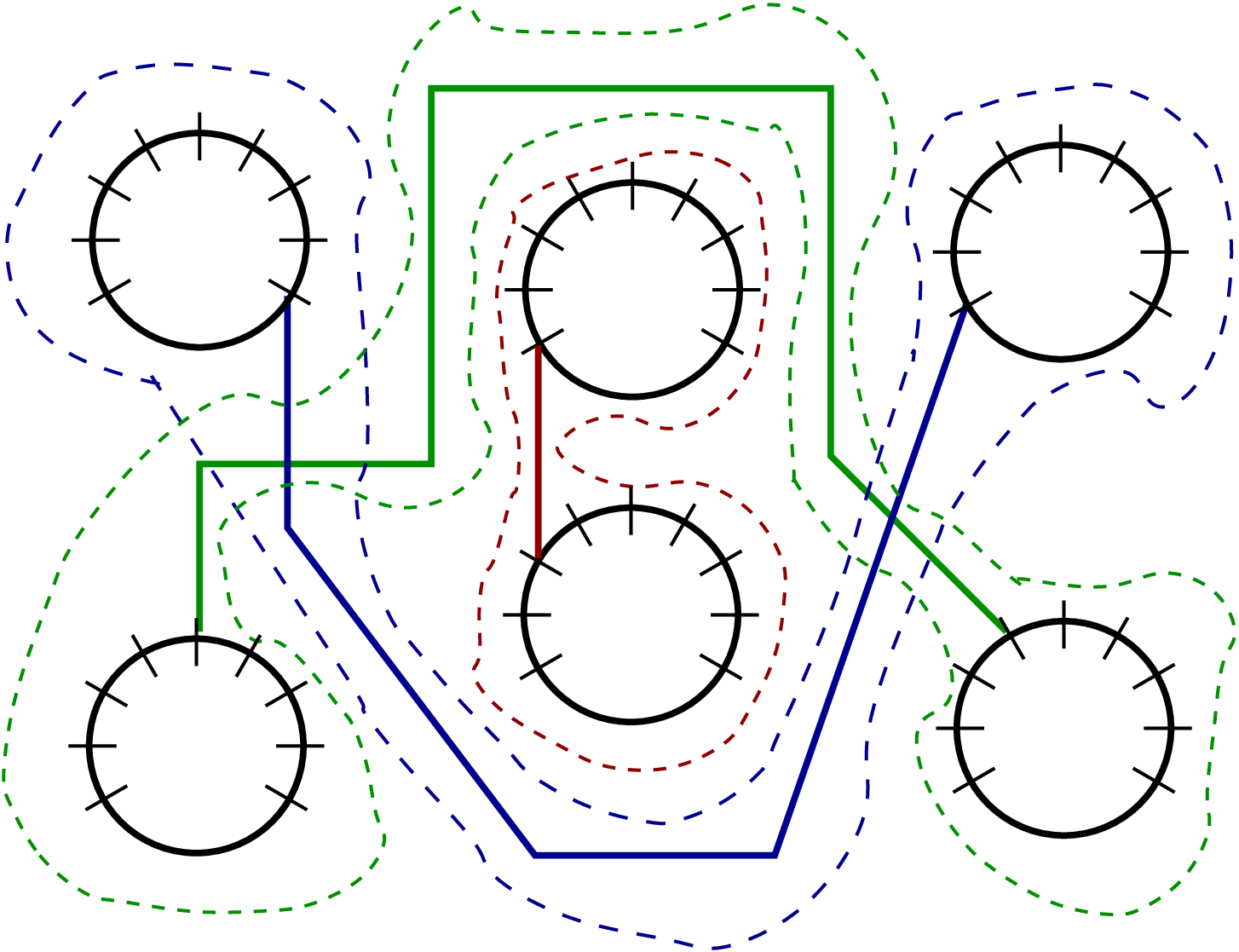}
\caption{The loops $\gamma_0$, $\gamma_1$, and $\gamma_2$ enclosing $\alpha$, $\beta$, and $\kappa_1$ (respectively) and the boundary components of $T$ incident to their endpoints form a path of length 2 in $\mc{C}(T)$.}
\label{Fig: path}
\end{figure}

Recall that a point of  $\boundary_L T\cap \boundary_L S$  is \defn{active} if the adjacent arc of $T\cap S$ changes isotopy class in $T$ at some critical value of $\phi|_S$. A point is \defn{inactive} if it is not active at any critical value. Each inactive point is an endpoint of an arc in a constant path. Since $d_\mathcal{C}(T) \geq 3$, each arc in a constant path is incident to one of the components of $\boundary_L T$ incident to either $\alpha$ or $\beta$. There are at most 4 components of $\boundary_L T$ incident to $\alpha \cup \beta$, so there are at most 8 components of $\boundary_L T$ incident to an arc in a constant path. Because $L$ is a knot, each component of $\boundary_L T$ has exactly $|\boundary_L S|\Delta$ points of intersection with $\boundary_L S$. Hence, there are at most $8|\boundary_L S|\Delta$ inactive points in $\boundary_L T \cap \boundary_L S$. There are exactly $2b(T)|\boundary_L S|\Delta$ points in $\boundary_L T\cap \boundary_L S$, so there are at least $(2b(T) - 8)|\boundary_L S|\Delta$ active points. Each active arc is incident to two active points. Thus, by Inequality \eqref{fundamental ineq},
\[
(b(T) - 4)|\boundary_L S|\Delta \leq Q \leq -2\chi(S).
\]
We have $-2\chi(S) = 4g(S) - 4|S| +2 |\boundary_0 S|+ 2|\boundary_L S|$.
Consequently,
\[
(b(T) - 4)\Delta \leq \big(4g(S) - 4|S| +2 |\boundary_0 S|\big)/|\boundary_L S| + 2.
\]\end{proof}

\section{Bounds for essential surfaces}\label{Non-MeridoinalEss}
In this section, assume that $S$ is an essential, compact, orientable surface properly embedded in $N=M(L)$, with $L$ a link and $N$ irreducible and $\boundary$-irreducible. We show that the genus of $S$ produces an upper bound on the product of the bridge number and distance of $T$, a sloped Heegaard surface for $N$. As usual, let $\sigma$ be the multislope represented by $\boundary_L S$ and $\tau$ the multislope represented by $T$. Assume, as usual, that $\Delta(\sigma,\tau) \geq 1$.

We begin by showing that there is a maximally essential interval for a sweepout of $N$ by $T$, adapted to $S$. The proof is standard and can, in essence, be found in Gabai's original use of thin position \cite{G}.

\begin{lemma}\label{Lem: ess surface}
Suppose $T$ is a strongly $\boundary$-irreducible sloped Heegaard surface for $N$ and $S$ is essential. Then there is a sweepout $\phi$ by $T$ adapted to $S$ such that each critical value of $\phi|_S$ has a unique critical point in its pre-image and so that there is a maximally essential interval with respect to $\phi$.
\end{lemma}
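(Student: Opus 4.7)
The plan is to combine Morse theory for $\phi|_S$ with a disc-tracking argument. I will classify regular values of $\phi|_S$ as \emph{low} or \emph{high} according to whether some component of $T_t \cap S$ essential in $T_t$ bounds a disc in $(T_t)_\dn$ or $(T_t)_\up$, and use essentialness of $S$ together with strong $\partial$-irreducibility of $T$ to sandwich a maximally essential interval between a low value near $0$ and a high value near $1$.

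\textbf{Setup.} Begin with any sweepout $\phi$ representing $(T, T_\dn, T_\up)$. By a small generic isotopy of $S$, arrange that $\phi|_S$ is Morse with all critical values distinct and that $\partial T_t$ meets $\partial S$ minimally in each torus component of $\partial_L N$ for every $t$; this realizes conditions (1)--(3) of Definition~\ref{Defn: adapted}, while (4)--(5) hold vacuously since each critical value has a unique critical point. Within the isotopy class of $S$, also minimize $|T \cap S|$. Because $S$ is incompressible and $N = M(L)$ is irreducible, any innermost disc in $T_t$ cut off by a circle of $T_t \cap S$ inessential in $T_t$ would cobound with a subdisc of $S$ a $2$-sphere bounding a ball, yielding a reducing isotopy; a parallel outermost-bigon together with a $\partial$-compression argument handles arcs inessential in $T_t$.

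\textbf{Low/high claims.} I verify (i) regular values $t$ close to $0$ are low and those close to $1$ are high, and (ii) no regular value is simultaneously low and high. For (i), for $t$ near $0$ the closure of $(T_t)_\dn$ is a regular neighborhood of a spine $\Gamma$ of $S_\dn$; since $S$ is essential, in particular not $\partial$-parallel, it must meet the interior of some edge of $\Gamma$ transversely, and a meridian circle of that edge is a component of $T_t \cap S$ essential in $T_t$ which bounds a meridian disc in $(T_t)_\dn$. The symmetric argument handles $t$ near $1$. For (ii), if witnessed by distinct components $\alpha, \beta \subset T_t \cap S$, their disjointness gives disjoint compressing or $\partial$-compressing discs on opposite sides of $T$, contradicting strong $\partial$-irreducibility; the degenerate case $\alpha = \beta$ produces either an embedded $2$-sphere (if $\alpha$ is a circle) or a disc with boundary on $\partial_L N$ (if $\alpha$ is an arc), and either together with irreducibility/$\partial$-irreducibility of $N$ and the minimality of $|T \cap S|$ yields a reducing isotopy contradicting the setup.

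\textbf{Locating the interval.} Let $v_1 < \cdots < v_k$ be the critical values of $\phi|_S$. On each open interval between consecutive critical values the isotopy class of $T_t \cap S$ in both $T$ and $S$ is constant, so the low and high properties are constant. By (i) the first interval is low and the last is high. A refinement of (ii) that tracks how the witness discs $D_\dn, D_\up$ persist or transform through each index $0$, $1$, or $2$ critical point shows that the last low interval and the first high interval cannot be adjacent; hence there exist critical values $v_i < v_j$ such that the interval immediately below $v_i$ is low, the interval immediately above $v_j$ is high, and every interval strictly between them is neither. Choose regular values $u \in (v_{i-1}, v_i)$, $a \in (v_i, v_{i+1})$, $b \in (v_{j-1}, v_j)$, $v \in (v_j, v_{j+1})$. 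Then $[u, v] \supset [a, b]$, each of $[u, a]$ and $[b, v]$ contains exactly one critical value, and the required low and high witnesses at $u$ and $v$ are supplied by construction. Finally, $[a, b]$ is essential: on the intervening neither-low-nor-high intervals, any arc inessential in $T_t$, or circle essential in $T_t$ but inessential in $S$, would by an innermost/outermost disc argument produce a disc on one side of $T_t$ and hence a low or high witness, contrary to the choice of interval.

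\textbf{Main obstacle.} The central subtlety is the refinement of (ii) needed to rule out a single critical point simultaneously destroying a low witness and creating a high witness: generically a disc witness persists across a critical point unless that critical point directly involves the witness component, and one re-applies strong $\partial$-irreducibility to the persistent witness together with the newly-produced one to extract the contradiction. A secondary technical point is that the initial minimization of $|T \cap S|$ must be arranged compatibly with the Morse condition across the entire sweepout, which is standard but requires a careful generic position argument.
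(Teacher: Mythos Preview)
Your approach is essentially the same as the paper's: both run the standard Gabai labelling argument, marking each inter-critical interval as $\downarrow$ (your ``low'') or $\uparrow$ (``high'') according to whether some component of $T_t\cap S$ essential in $T_t$ bounds a disc below or above, then invoking strong $\partial$-irreducibility to forbid both labels on one interval and distinct labels on adjacent intervals, and reading off the unlabelled gap as the maximally essential interval. The paper's write-up is terser---it dispatches the ``adjacent intervals'' step with a single ``Similarly''---but the content is the same as your ``refinement of (ii)''.

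Two points to clean up. First, your attempt to ``minimize $|T\cap S|$'' in the setup is both impossible and unnecessary. Impossible because the sweepout itself creates and destroys trivial circles at every index-$0$ and index-$2$ critical point of $\phi|_S$, so $|T_t\cap S|$ cannot be simultaneously minimal for all $t$; unnecessary because the definition of \emph{essential interval} permits circles inessential in $T_t$ (it only constrains circles essential in $T_t$, and arcs). The paper simply omits this step. Second, in your final paragraph the claim that an arc of $T_t\cap S$ inessential in $T_t$ would ``produce a disc on one side of $T_t$ and hence a low or high witness'' is not correct as stated: a low/high witness must by definition be \emph{essential} in $T_t$, and the outermost bigon you extract lies \emph{in} $T_t$, not properly on one side of it. What actually disposes of such arcs is $\partial$-incompressibility of $S$ (forcing the arc to be inessential in $S$ as well) together with the minimal-intersection hypothesis on $\partial T\cap\partial S$ and $\partial$-irreducibility of $N$ (ruling out the resulting bigon on the torus). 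The paper glosses over this verification with a single sentence, but you should not claim it follows from the labelling mechanism alone.
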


\begin{proof}
Let $\phi$ be a sweepout of $N$ by $T$. By standard Morse theory arguments and by choosing a flat metric on each component of $\boundary_L N$ and requiring that the boundaries of the level sets of $\phi$ be geodesics, we may perturb $\phi$ so that $\phi$ is adapted to $S$ and so that every critical value of $\phi|_S$ has a unique critical point in its preimage. Let \[ v_0 = 0 < v_1 <  \hdots < v_{n-1}< 1 = v_n\] be the critical values of $\phi|_S$ and let $I_i = (v_{i-1}, v_i)$. Label each of these intervals with $\up$ (respectively, $\dn$) if, for $t \in I_i$, some component of $T_t \cap S$ is an essential loop in $T_t$ that bounds a compressing disc above (respectively, below) $T_t$ or some component of $T_t \cap S$ is an essential arc in $T_t$ that bounds a $\partial$-compressing disc above (respectively, below) $T_t$. The label is independent of the choice of $t$ in the interval. Since $T$ is strongly $\boundary$-irreducible, no interval is labelled both $\up$ and $\dn$.

Suppose adjacent intervals $I_i$ and $I_{i+1}$ have distinct labels. Let $t_i\in I_i$ and $t_{i+1}\in I_{i+1}$. Since every critical value of $\phi|_S$ has a unique critical point in its preimage, then components of $T_{i+1}\cap S$ are obtained from the components of $T_i \cap S$ by inserting a nullhomotopic loop (if there is a minimum at $v_i$), deleting a nullhomotopic loop (if there is a maximum at $v_i$), or by banding two (not necessarily distinct) components of $T_i \cap S$ together (if there is a saddle at $v_i$). In each of these cases every component of $T_i \cap S$ can be isotoped to be disjoint from every component of $T_{i+1} \cap S$. Since $I_i$ and $I_{i+1}$ have distinct labels, this implies that $T$ is weakly $\boundary$-reducible, a contradiction to our assumption that $T$ is strongly $\boundary$-irreducible. Hence, no two adjacent intervals have distinct labels.

When $t$ is close to 1, since $L$ is a link in bridge position with respect to $\ob{T}$, every arc of $T_t \cap S$ is essential in $T_t$ and bounds a $\boundary$-compressing disc above $T_t$. Thus, $I_n$ is labelled $\up$. Similarly, $I_1$ is labelled $\dn$. Thus there are $i$ and $i'$ with $i < i'-1$ so that $I_i$ is labelled $\dn$, $I_{i'}$ is labelled $\up$, and for each $j$ with $ i < j < i'$, the interval $I_j$ is unlabelled. Choose $u \in I_i$, $v \in I_{i'}$ and $a \in I_{i+1}$, $b \in I_{i' - 1}$ with $a < b$. Then the interval $[a,b]$ is essential. To see this note that no arc of intersection can be inessential on both $S$ and $T$ as otherwise we can reduce $|S \cap T|$. As $S$ is essential, there is no arc that is inessential in $T$ but essential in $S$. The interval $[u,v] \supset [a,b]$ certifies the fact that $[a,b]$ is a maximally essential interval.
\end{proof}

Since we have a maximally essential interval with respect to $\phi$, we can use the genus of $S$ to bound bridge number and distance of $T$. Recall that we always assume that $N = M(L)$ is irreducible and $\boundary$-irreducible.
\begin{theorem}\label{Thm: Ess Surf}
Suppose that $S \subset N$ is a $\sigma$-sloped essential surface, no component of which is a disc or sphere. Suppose that $T \subset N$ is a $\tau$-sloped Heegaard surface and that $\Delta=\Delta(\sigma,\tau) \geq 1$. Then:
\[
\Delta b_{\min}(T)(d_{\mc{AC}}(T) - 2) \leq \frac{4g(S) - 4|S| + 2|\boundary_0 S|}{|\boundary_L S|} + 2
\]
and
\[
\Delta b_{\min}(T)(d_{\mc{C}}(T) - 4) \leq \frac{8g(S) - 8|S| + 4|\boundary_0 S|}{|\boundary_L S|} + 4.
\]
\end{theorem}
\begin{proof}
By Lemma~\ref{Lem: T weak red}, we may assume that $T$ is strongly $\boundary$-irreducible. Let $\phi$ be a sweepout representing $T$. By Lemma~\ref{Lem: ess surface}, $\phi_S$ has a maximally essential interval. By Theorem~\ref{thm:distanceL} we have our conclusion.
\end{proof}

Specializing to the situation of knots in closed manifolds, we obtain:

\begin{theorem}\label{Thm: Ess Surface}
Suppose that $L$ is a knot with irreducible and $\boundary$-irreducible exterior in a closed, orientable 3-manifold $M$. Let $M'$ be the result of non-trivial Dehn surgery on $L$ with surgery distance $\Delta$. Then:
\begin{itemize}
   \item if there is no closed, connected, orientable, separating, essential surface of genus $g$ in the exterior of $L$, and if $M'$ contains such a surface, then
\[
\Delta b(L)(d_{\mc{AC}}(L) - 2) \leq \max(1,2g).
\]
and
\[
\Delta b(L)(d_\mc{C}(L) - 4) \leq \max(2, 4g)
\]
\item  if there is no closed, connected, orientable, non-separating, essential surface of genus $g$ in the exterior of $L$, and if $M'$ contains such a surface, then
\[
\Delta b(L)(d_{\mc{AC}}(L)-2)\leq max(1,4g-2)
\]
and
\[
\Delta b(L)(d_\mc{C}(L) - 4) \leq \max(2, 8g - 4)
\]
\end{itemize}
\end{theorem}
\begin{proof}

If $M'$ contains a closed, connected, orientable, separating essential surface of genus $g$, let $\ob{S}$ be such a surface, chosen so as to intersect $L^*$, the surgery dual to $L$, minimally out of all such surfaces. Otherwise, if $M'$ contains a closed, connected, orientable non-separating surface of genus $g$, let $\ob{S}$ be such a surface, chosen so as to intersect $L^*$ minimally, out of all such surfaces. We may assume that $\ob{S}$ intersects $L^*$. By the minimality property of $\ob{S}$, it is easy to see that $S = \ob{S} \cap M(L)$ is essential in $M(L) = N$.

Let $\ob{T}$ be any bridge surface for $(M,L)$ which minimizes the pair $(g(\ob{T}), b(\ob{T}))$ lexicographically. By Theorem \ref{Thm: Ess Surf}, we conclude
\[\begin{array}{rcl}
\Delta b(L)(d_\mc{AC}(T) - 2) &=& \Delta b(T)(d_\mc{AC}(T) - 2) \\
&\leq& \frac{4g - 4}{|\boundary_L S|} + 2\\
\end{array}
\]

If $g = 0$, then
\[
\Delta b(L)(d_\mc{AC}(T) - 2) < 2.
\]
Since the left hand side is either $-\infty$ or an integer, we have
\[
\Delta b(L)(d_\mc{AC}(T) - 2) \leq 1.
\]

If $g \geq 1$, then $\frac{4g - 4}{|\boundary_L S|} \leq 4g - 4$ and we have
\[
\Delta b(L)(d_\mc{AC}(T) - 2) \leq 4g - 4 + 2 = 4g - 2.
\]

If $S$ is separating, we can do better. For in that case, $|\boundary_L S| \geq 2$ and so, if $g \geq 1$, we have
\[
\Delta b(L)(d_\mc{AC}(T) - 2) \leq 2g - 2 + 2 = 2g.
\]

Thus, independent of the genus of $S$, if $S$ is separating we have:
\[
\Delta b(L)(d_\mc{AC}(T) - 2) \leq \max(1, 2g),
\]
and if $S$ is non-separating, we have
\[
\Delta b(L)(d_\mc{AC}(T) - 2) \leq \max(1, 4g-2).
\]
Since these inequalities hold for all bridge surfaces $\ob{T}$ realizing the Heegaard genus of $M$ and the bridge number of $L$, we have our desired inequalities for $d_\mc{AC}$. The inequalities for $d_\mc{C}$ follow from Lemma \ref{Lem: relating AC and C}.
\end{proof}

\section{The graphic}\label{sec:doublesweepout}

As usual, let $L \subset M$ be a link in a compact, orientable 3-manifold such that  $N=M(L)$ is irreducible and $\boundary$-irreducible. Let $S$ be either a sloped Heegaard surface for $N$ or a thick surface of a sloped generalized Heegaard surface for $N$. Let $\boundary_L N$ be the non-empty union of all components of $\boundary N \setminus \boundary M$ which have non-trivial intersection with $\boundary S$. (In this section, we do not require that $\boundary_L N = \boundary N \setminus \boundary M$.) Let $T$ be a sloped Heegaard surface for $N$ inducing the meridional (for $L \subset M$) multislope $\tau$ on $\boundary_L N$. Let $S$ have boundary slope $\sigma$ on $\boundary_L N$ and assume $\Delta(\sigma,\tau) > 0$.

Let $\phi_T$ be a sweepout of $N$ associated to $T$ and let $\phi_S$ be either a sweepout or a partial sweepout of $N$ associated to $S$. Assume that the boundary of each level surface of $\phi_S$ intersects the boundary of each level surface of $\phi_T$ minimally in each component $V$ of $\boundary_L N$. (This is possible, for example, by choosing a flat metric on each $V$ and requiring that the boundaries of the level sets be geodesics.)

Consider the product map $\phi_T \times \phi_S\co N \rightarrow [0,1]\times[0,1]$.   Each point $(t,s)$ in the interior of the square represents a pair of surfaces $T_t = \phi^{-1}_T(t)$ isotopic to the surface $T$ and $S_s = \phi^{-1}_S(s)$ isotopic to the surface $S$.  Recall that $(S_s)_\dn$ and $(S_s)_\up$ denote the intersection of compressionbodies to either side of $\ob{S}_s$ with $N$.

The \emph{graphic} is the union of $\boundary ([0,1]\times[0,1])$ with the subset of the square consisting of all points $(t,s)$ where $T_t$ and $S_s$ are tangent. A \emph{region} of the graphic is a component of the complement of the graphic in the unit square. We say that $\phi_T \times \phi_S$ is \emph{generic} if it is stable \cite{J09} on the complement of the spines and each line $\{t\}\times[0,1]$ and $[0,1]\times \{s\}$ contains at most one vertex of the graphic.  (Stable functions are those which, in the words of \cite{J09}, have the property that ``small perturbations do not change the topology''. Stable maps provide an alternative approach to the graphic from Cerf theory.) The vertices of the graphic in the interior of $[0,1]\times[0,1]$ are valence four (crossings) and valence two (cusps).  By general position of the spines, there are finitely many points of the graphic which do not have a neighborhood homeomorphic to $\mathbb{R}$. Moreover, we can consider the graphic to be a finite planar graph. The vertices in the corners of the boundary correspond to points of intersection in $\partial_L N$ between the union of components of $\boundary_0 N$ with a spine of $\phi_S$ and the union of components of $\boundary_0 N$ and a spine of $\phi_T$.  All other vertices in the boundary of the graphic are valence one or two. We model our analysis of the graphic on that of Johnson~\cite{J09}.

For $(t,s) \in (0,1) \times (0,1)$ a regular value of $\phi_T \times \phi_S$ we say that $T_t$ is \defn{essentially above} $S_s$ if there exists a component $l \subset S_s \cap T_t$ that is an essential arc or circle in $S_s$ but bounds a compressing or boundary compressing disc for $S_s$ contained in $(S_s)_{\dn}$. Similarly, we say that $T_t$ is \defn{essentially below} $S_s$ if there exists a component $l \subset S_s \cap T_t$ that is an essential arc or circle in $S_s$ and bounds a compressing or boundary compressing disc for $S_s$ contained in $(S_s)_{\up}$.

Let $Q_a$ ($Q_b$) denote the points in $(0,1) \times (0,1)$ such that $T_t$ is essentially above (below) $S_s$.

\begin{remark}\label{QatopQbbottom}
If the sloped Heegaard surface or thick surface $S$ is compressible into $S_\dn$, then any spine for $S_\dn$ will have an edge $e$. If for $t_0 \in (0,1)$ the surface $T_{t_0}$ is transverse to $e$ and if $\epsilon > 0$ is small, then a curve of $T_{t_0} \cap S_\epsilon$ corresponding to a meridian for $e$ will be an essential curve in $S_\epsilon$ bounding a disc in $(S_\epsilon)_\dn$. In this case, $(t_0, \epsilon) \in Q_a$. Thus, if $S$ is compressible in $S_\dn$, every open neighborhood of the bottom edge of the graphic has non-trivial intersection with $Q_a$. Similarly, if $S$ is compressible in $S_\up$, every open neighborhood of the top edge of the graphic has non-trivial intersection with $Q_b$. Thus, when $S$ is strongly $\boundary$-irreducible, every open neighborhood of the bottom edge of graphic has non-trivial intersection with $Q_a$ and every open neighborhood of the top edge of the graphic has non-trivial intersection with $Q_b$.
\end{remark}

\begin{figure}[htb]
\labellist \small\hair 2pt
\pinlabel {$s$} [r] at 52 82
\pinlabel {$s$} [r] at 270 82
\pinlabel {$t$} [t] at 152 1
\pinlabel {$t$} [t] at 363 1
\pinlabel {$Q_b$} at 91 156
\pinlabel {$Q_b$} at 298 156
\pinlabel {$Q_a$} at 212 25
\pinlabel {$Q_a$} at 414 25
\endlabellist
\centering
\centering
  \includegraphics[width=3in]{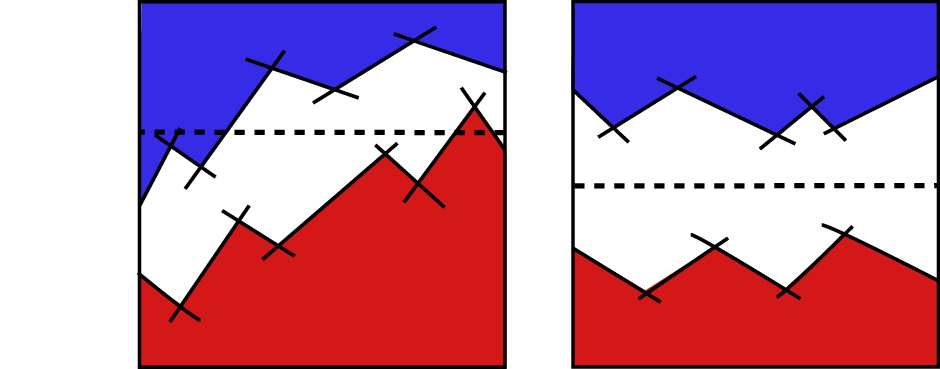}
  \caption{The graphic on the left represents the situation when $\phi_S$ essentially spans $\phi_T$. The graphic on the right represents the situation when $\phi_S$ essentially splits $\phi_T$. The dashed horizontal line on the left is at $s = s_0$ in the Definition \ref{def:spanning}. The dashed horizontal line on the right is at height $s = s_0$ in Definition \ref{def:splitting}. This figure is adapted from \cite[Figure 2]{J10}.}
  \label{fig:graphics}
\end{figure}

\subsection{Spanning}

\begin{definition}\label{def:spanning} The sweepout or partial sweepout $\phi_S$ \defn{essentially spans} the sweepout $\phi_T$ if there exist $t_1, t_2, s_0$ in $(0,1)$ such that $(t_1,s_0) \in Q_a$ and $(t_2,s_0) \in Q_b$, as on the left in Figure~\ref{fig:graphics}. Observe that if $ (Q_a \cap Q_b)$ is not empty, then $\phi_S$ essentially spans $\phi_T$ since $Q_a$ and $Q_b$ are open subsets of $(0,1)\times (0,1)$. Additionally, in the case that $ (Q_a \cap Q_b)$ is not empty, since $Q_a$ and $Q_b$ are open, we can assume $t_1 < t_2$.
\end{definition}

\begin{lemma} \label{lem:spanning} Suppose that $\phi_S$ is a sweepout or partial sweepout that essentially spans the sweepout $\phi_T$. Then $S$ is weakly $\boundary$-reducible.
\end{lemma}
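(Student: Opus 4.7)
The plan is to unpack the essential spanning hypothesis and extract two disjoint compressing or $\boundary$-compressing disks on opposite sides of a level surface isotopic to $S$. First I would use Definition~\ref{def:spanning} to choose $s_0 \in (0,1)$ and $t_1 < t_2 \in (0,1)$ with $(t_1,s_0) \in Q_a$ and $(t_2,s_0) \in Q_b$. Since $Q_a$ and $Q_b$ are open, I may perturb slightly if necessary so that these two points are regular values of the product map $\phi_T \times \phi_S$; in particular $T_{t_1}$ and $T_{t_2}$ are each transverse to $S_{s_0}$.

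By the definition of $Q_a$, there is a component $l_1$ of $T_{t_1} \cap S_{s_0}$ that is essential in $S_{s_0}$ and that bounds a compressing (if $l_1$ is a loop) or $\boundary$-compressing (if $l_1$ is an arc) disk $D_1$ for $S_{s_0}$ lying in $(S_{s_0})_\dn$. Symmetrically, $(t_2,s_0) \in Q_b$ produces an essential component $l_2 \subset T_{t_2} \cap S_{s_0}$ bounding a compressing or $\boundary$-compressing disk $D_2$ for $S_{s_0}$ lying in $(S_{s_0})_\up$. The disks sit on opposite sides of $S_{s_0}$, so their interiors lie in disjoint open subsets of $N$.

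It remains to arrange that their boundaries are disjoint as well. The traces on $S_{s_0}$ satisfy $l_1 \cap l_2 \subset T_{t_1} \cap T_{t_2} = \emptyset$, since distinct regular level surfaces of the sweep-out $\phi_T$ are disjoint. If one or both of $D_1, D_2$ is a $\boundary$-compression disk, its boundary also contains a co-bounding arc on $\boundary N$; here I would invoke the standing assumption that the boundaries of level surfaces of $\phi_S$ and $\phi_T$ intersect minimally and transversally on each torus component of $\boundary_L N$ to isotope the $\boundary N$-arcs of $\boundary D_1$ and $\boundary D_2$ off of each other by a small controlled isotopy supported in $\boundary N$. The resulting pair $(D_1,D_2)$ is then a disjoint pair of compressing/$\boundary$-compressing disks on opposite sides of $S_{s_0}$, and since $S_{s_0}$ is properly isotopic to $S$, this exhibits $S$ as weakly $\boundary$-reducible.

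The delicate step is this last disjointness claim on $\boundary N$: one must argue that the boundary arcs of the two disks can be made disjoint without destroying the essentiality of $l_1$ or $l_2$ in $S_{s_0}$. The hypothesis that $S$ is not an annulus enters at precisely this point, ruling out the degenerate configuration in which the only available co-bounding arcs on $\boundary N$ are forced to run between the same pair of boundary components in essentially the same way and therefore cannot be separated. Once that exceptional case is excluded, the rest of the argument is a direct consequence of the disjointness of distinct regular level sets of the sweep-out.
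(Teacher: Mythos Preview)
Your approach matches the paper's: pick $s_0$, $t_1<t_2$ from the spanning hypothesis, extract the two disks $D_1\subset (S_{s_0})_\dn$ and $D_2\subset (S_{s_0})_\up$, and observe their boundaries are disjoint because $T_{t_1}\cap T_{t_2}=\emptyset$. The paper's proof is exactly this, in three sentences.

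Where you go astray is in the ``delicate step.'' There is nothing delicate about the disjointness of $\partial D_1$ and $\partial D_2$ on $\partial N$, and no isotopy is needed. Since $D_1$ lies in $(S_{s_0})_\dn$ and $D_2$ lies in $(S_{s_0})_\up$, the $\partial N$-arc of $\partial D_1$ is contained in $\partial N\cap (S_{s_0})_\dn$ and the $\partial N$-arc of $\partial D_2$ is contained in $\partial N\cap (S_{s_0})_\up$. These two pieces of $\partial N$ meet only along $\partial S_{s_0}$, and the endpoints of the two arcs (which lie in $\partial T_{t_1}\cap\partial S_{s_0}$ and $\partial T_{t_2}\cap\partial S_{s_0}$ respectively) are distinct because $\partial T_{t_1}$ and $\partial T_{t_2}$ are disjoint. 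So $\partial D_1\cap\partial D_2=\emptyset$ automatically.

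In particular, your explanation of where the hypothesis ``$S$ is not an annulus'' enters is incorrect: the paper's proof does not invoke it at all, and neither does yours once you see the previous paragraph. That hypothesis is a standing non-degeneracy assumption carried through the section (compare Lemma~\ref{Lem: T weak red}), not something the argument here actually uses.
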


\begin{proof}
Choose $s_0, t_1, t_2$ as in Definition~\ref{def:spanning}. Without loss of generality, $t_1 < t_2$. Let $\wihat{T} = T_{t_1} \cup T_{t_2}$. Let $l_k \subset S_{s_0} \cap T_{t_k}$ be a component such that $l_k$ is essential in $S_{s_0}$ and bounds a compressing or boundary compressing disc $D_k$ for $S_{s_0}$ lying in $(S_{s_0})_{\dn}$ if $k = 1$ and in $(S_{s_0})_{\up}$ if $k = 2$. Since $T_{t_1}$ and $T_{t_2}$ are disjoint, $\partial D_1$ and $\partial D_2$ are disjoint.
\end{proof}

\subsection{Splitting}\label{sec:splitting}

\begin{definition}\label{def:splitting}
The sweepout or partial sweepout $\phi_S$ \defn{essentially splits} the sweepout $\phi_T$ at $s_0 \in (0,1)$ if $Q_a$ and $Q_b$ are both non-empty and if  for all $t$, the point $(t, s_0)$ is in neither $Q_a \cup Q_b$ nor in the set of vertices of the graphic. The right side of Figure~\ref{fig:graphics} shows an example where $\phi_S$ essentially splits $\phi_T$ at some $s_0$.

The sweepout or partial sweepout $\phi_S$ \defn{weakly splits} the sweepout $\phi_T$ at $s_0 \in (0,1)$, if  the following hold:
\begin{itemize}
\item $Q_a$ and $Q_b$ are both non-empty,
\item for all $t \in (0,1)$, the point $(t, s_0) \not \in (Q_a \cup Q_b)$,

\item there exists $t_0 \in (0,1)$ such that $(t_0, s_0)$ is a valence 4 vertex of the graphic,
\item The point $(t_0, s_0)$ is in the intersection of the closures of $Q_a$ and $Q_b$.
\end{itemize}
\end{definition}

\begin{lemma}\label{lem:uvinterval}
Suppose $\phi_S$ is a generic sweepout or a partial sweepout that essentially splits or weakly splits the sweepout $\phi_T$ at $s_0$. If $\phi_S$ is a partial sweepout, assume that the thin surfaces for the associated sloped generalized Heegaard surface are incompressible and $\boundary$-incompressible. Assume also that both  $T$ and $S$ are strongly $\boundary$-irreducible. Then there exists a maximally essential interval $[a, b] \subset (0,1)$ for the sweepout $\phi_T \times \phi_S(t,s_0)$ of $N$.
\end{lemma}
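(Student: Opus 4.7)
The plan is to adapt the labeling argument from Lemma~\ref{Lem: ess surface} (in which $S$ was assumed essential) to the present setting, where essentiality of $S$ is replaced by the splitting hypothesis and strong $\boundary$-irreducibility of $S$. Fix $s_0$ and restrict to the horizontal line $\ell = \{s = s_0\} \subset (0,1)^2$. The crossings of $\ell$ with the graphic are the critical values $t_1 < \cdots < t_k$ of $\phi_T|_{S_{s_0}}$: each is a simple critical point in the essentially splitting case, and in the weakly splitting case exactly one of them, say $t_{i_0}$, comes from the valence-four vertex and records two critical points at the same height, consistent with Definition~\ref{Defn: adapted}. Let $I_0, I_1, \ldots, I_k$ be the resulting open subintervals of $\ell$.

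First I would install a $T$-based labeling: mark $I_j$ with $\dn$ (respectively $\up$) if some component of $T_t \cap S_{s_0}$ for $t \in I_j$ is essential in $T_t$ and bounds a compressing or boundary-compressing disc for $T_t$ in $(T_t)_\dn$ (respectively $(T_t)_\up$). Strong $\boundary$-irreducibility of $T$ prevents a single interval from carrying both labels, and a standard argument -- pushing compressing discs off the band region at a saddle -- shows adjacent intervals cannot carry opposite labels. For $t$ near $0$, the surface $T_t$ is a thin neighborhood of a spine $\Sigma_\dn$ of $(T_0)_\dn$; in the generic product map $\phi_T \times \phi_S$ the surface $S_{s_0}$ meets $\Sigma_\dn$ transversely, so $T_t \cap S_{s_0}$ contains meridian circles of spine edges (essential in $T_t$, bounding in $(T_t)_\dn$). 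Hence $I_0$ is labeled $\dn$, and symmetrically $I_k$ is labeled $\up$. (The assumption that $F_0$ and $F_1$ are incompressible and $\boundary$-incompressible in the partial sweepout case is used here to ensure that this generic intersection analysis is not spoiled by degeneracies at the ends of $\phi_S$.) Therefore there are indices $i < i'$ such that $I_i$ is labeled $\dn$, $I_{i'}$ is labeled $\up$, and all $I_j$ with $i < j < i'$ are unlabeled; the adjacency rule forces $i + 1 < i'$. Picking regular values $u \in I_i$, $a \in I_{i+1}$, $b \in I_{i'-1}$, $v \in I_{i'}$ with $a \leq b$, each of $[u,a]$ and $[b,v]$ meets exactly one critical value, and the definition of the labels supplies essential arcs or circles on $T_u$ and $T_v$ bounding on the required sides. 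Thus $[u,v]$ will certify $[a,b]$ as maximally essential once we verify that $[a,b]$ is itself essential.

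The remaining and most delicate step is the essentiality of $[a,b]$. For $t$ in an unlabeled $I_j$ with $i < j < i'$, I have to rule out four failure modes for a component $\gamma$ of $T_t \cap S_{s_0}$: (i) a circle essential in $T_t$ but inessential in $S_{s_0}$; (ii) an arc essential in $T_t$ but inessential in $S_{s_0}$; (iii) an arc essential in $S_{s_0}$ but inessential in $T_t$; and (iv) an arc inessential in both surfaces. In (i) the circle bounds a disc in $S_{s_0}$ lying on one side of $T_t$ and so serves as a compressing disc for $T_t$, contradicting that $I_j$ is unlabeled; (ii) is analogous with a boundary-compressing disc. In (iii) the parallelism disc in $T_t$ yields a boundary-compressing disc for $S_{s_0}$, placing $(t,s_0)$ in $Q_a$ or $Q_b$ and contradicting the splitting hypothesis. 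Case (iv) is the main obstacle: the two parallelisms fit together along $\alpha$ into an embedded disc in $M$ whose boundary is an arc on $\boundary T_t$ concatenated with an arc on $\boundary S_{s_0}$ sitting on a torus $V_i \subset \boundary_L N$. If these two boundary arcs cobound a bigon on $V_i$ then $\boundary T_t$ and $\boundary S_{s_0}$ are not minimally positioned, contradicting our standing setup; otherwise innermost/outermost disc surgery, combined with strong $\boundary$-irreducibility of $S$, either reduces $|T_t \cap S_{s_0}|$ via an isotopy allowed by the flat metric setup or exhibits a weak $\boundary$-reduction of $S$. I expect the bookkeeping for case (iv), together with a careful treatment of the exceptional critical value in the weakly splitting case (handled by allowing the valence-four value to sit inside $[u,a]$ or $[b,v]$, as permitted by Definition~\ref{Defn: adapted}), to occupy the bulk of the proof.
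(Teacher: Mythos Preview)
Your overall framework---Gabai-style labeling on the horizontal line $s=s_0$---is exactly what the paper does in its final paragraph, and your cases (i)--(iii) are essentially the content of the paper's Claim~2 together with the labeling argument. However, there is a genuine gap.

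You never verify condition~(5) of Definition~\ref{Defn: adapted}. In the weakly splitting case the line $s=s_0$ passes through the valence-four vertex $(t_0,s_0)$, producing two saddles of $\phi_T|_{S_{s_0}}$ at the same height; for Theorem~\ref{thm:distanceL} to apply you must show that every component of $T_{t_0-\epsilon}\cap S_{s_0}$ can be isotoped in $T$ off every component of $T_{t_0+\epsilon}\cap S_{s_0}$. Your proposed fix---arranging for $t_0$ to lie in $[u,a]$ or $[b,v]$---does not work: the position of $t_0$ is determined by the graphic, not by you, and there is no mechanism forcing it into a labeled interval rather than deep inside the unlabeled gap $[a,b]$. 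The paper's Claim~1 is devoted precisely to this point. Because $(t_0,s_0)\in\overline{Q_a}\cap\overline{Q_b}$, strong $\boundary$-irreducibility of $S$ forces some component of $T_{t_0}\cap S_{s_0+\epsilon}$ to intersect (after isotopy in $S$) some component of $T_{t_0}\cap S_{s_0-\epsilon}$; a band-configuration analysis in the style of Rubinstein--Scharlemann then shows that the two bands must attach on opposite sides of a common curve, from which the required $T$-side disjointness follows. This is the \emph{only} place strong $\boundary$-irreducibility of $S$ is used, and you have instead routed that hypothesis into your case~(iv), where it is not needed.

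Two smaller points. First, the incompressibility and $\boundary$-incompressibility of $F_0,F_1$ are not used to label $I_0$ and $I_k$ (that follows simply from transversality of $S_{s_0}$ with the spines of $T$); they are used in the paper's Claim~2 to surger the candidate disc $D\subset T_t$ off $F_0\cup F_1\cup S_{s_0}$ before concluding it lies entirely in $(S_{s_0})_\dn$ or $(S_{s_0})_\up$. Second, your case~(iv) is not the main obstacle but the easiest: gluing the two half-discs gives a properly embedded disc in $N$ whose boundary lies in the torus $V_i$; since $\boundary N$ is incompressible this boundary is inessential on $V_i$, forcing a bigon and contradicting minimal intersection. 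There is no ``otherwise'' branch to pursue.
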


\begin{proof}
For convenience, let $h_T (t)= \phi_T \times \phi_S(t,s_0)$. Note that, by construction for each regular value $t \in (0,1)$, $T_t = h_T^{-1}(t)$ is a surface in $N$ properly isotopic to $T$.

\textbf{Claim 1:} $h_T$ is adapted to $S_{s_0}$.

By the properties of the graphic and the definition of ``weakly splits'' and ``essentially splits'', $h_T$ satisfies conditions (1)-(4) of Definition~\ref{Defn: adapted}. We show that it also satisfies condition (5).

Suppose that $(t_0,s_0)$ is a valence four vertex of the graphic in the intersection of the closures of $Q_a$ and $Q_b$. Let $\epsilon > 0$ be small enough so that $(t_0, s_0)$ is the unique vertex of the graphic in the $\epsilon$ ball centered at $(t_0, s_0)$. Either $(t_0, s_0 + \epsilon) \in Q_b$ and $(t_0, s_0 - \epsilon) \in Q_a$ or vice versa. Since $S$ is strongly $\boundary$-irreducible, this implies that, in $S$, there is some component of $\mathbf{N}=T_{t_0} \cap S_{s_0 + \epsilon}$ which cannot be isotoped to be disjoint from some component of $\mathbf{S}=T_{t_0} \cap S_{s_0 - \epsilon}$.

To show that (5) holds, we must show that each component of $\mathbf{W} = T_{t_0 - \epsilon} \cap S_{s_0}$ can be isotoped in $T$ to be disjoint from each component of $\mathbf{E} = T_{t_0 + \epsilon} \cap S_{s_0}$. (The following argument is essentially that of \cite[Lemma 5.6]{RS}.) To see this, note that to  go from $\mathbf{S}$ to $\mathbf{N}$ we attach two disjoint bands $b_1$ and $b_2$. To go from $\mathbf{S}$ to $\mathbf{E}$ we attach $b_1$ and to go from $\mathbf{S}$ to $\mathbf{W}$ we attach $b_2$. To go from $\mathbf{E}$ to $\mathbf{N}$ we attach $b_2$ and to go from $\mathbf{W}$ to $\mathbf{N}$ we attach $b_1$. If the ends of $b_1$ and $b_2$ are adjacent to different components of $\mathbf{S}$ or if the ends of $b_1$ and $b_2$ are adjacent to the same side of the same component, then by moving those components slightly in the direction opposite the side to which the bands are incident, we see that $\mathbf{S}$ is disjoint from $\mathbf{N}$ in $S$. But this contradicts the observation that some component of $\mathbf{S}$ cannot be isotoped to be disjoint from some component of $\mathbf{N}$.  Then, an end of $b_1$ and an end of $b_2$ are adjacent to opposite sides of the same component $\nu$ of $\mathbf{S}$. If the other ends of $b_1$ and $b_2$ are adjacent to a component $\nu'$ of $\mathbf{S}$ then those ends must be adjacent to opposite sides of $\nu_2$, for else one of $b_1$ or $b_2$ would be a band with ends attached to opposite sides of a component of $\mathbf{E}$ or $\mathbf{W}$, a contradiction to the orientability of $S$ and $T$. Thus, the arcs and loops that result from attaching $b_1$ to $\mathbf{S}$ can be isotoped to be disjoint from the arcs and loops that result from attaching $b_2$ to $\mathbf{S}$ (push each component a little in the direction that the bands approach from). Hence, each component of $\mathbf{E}$ is isotopically disjoint from each component of $\mathbf{W}$. See Figure \ref{fig:doubleBand} for an example.\qed(Claim 1)

\begin{figure}[hbt]
\begin{center}
\includegraphics[scale=.3]{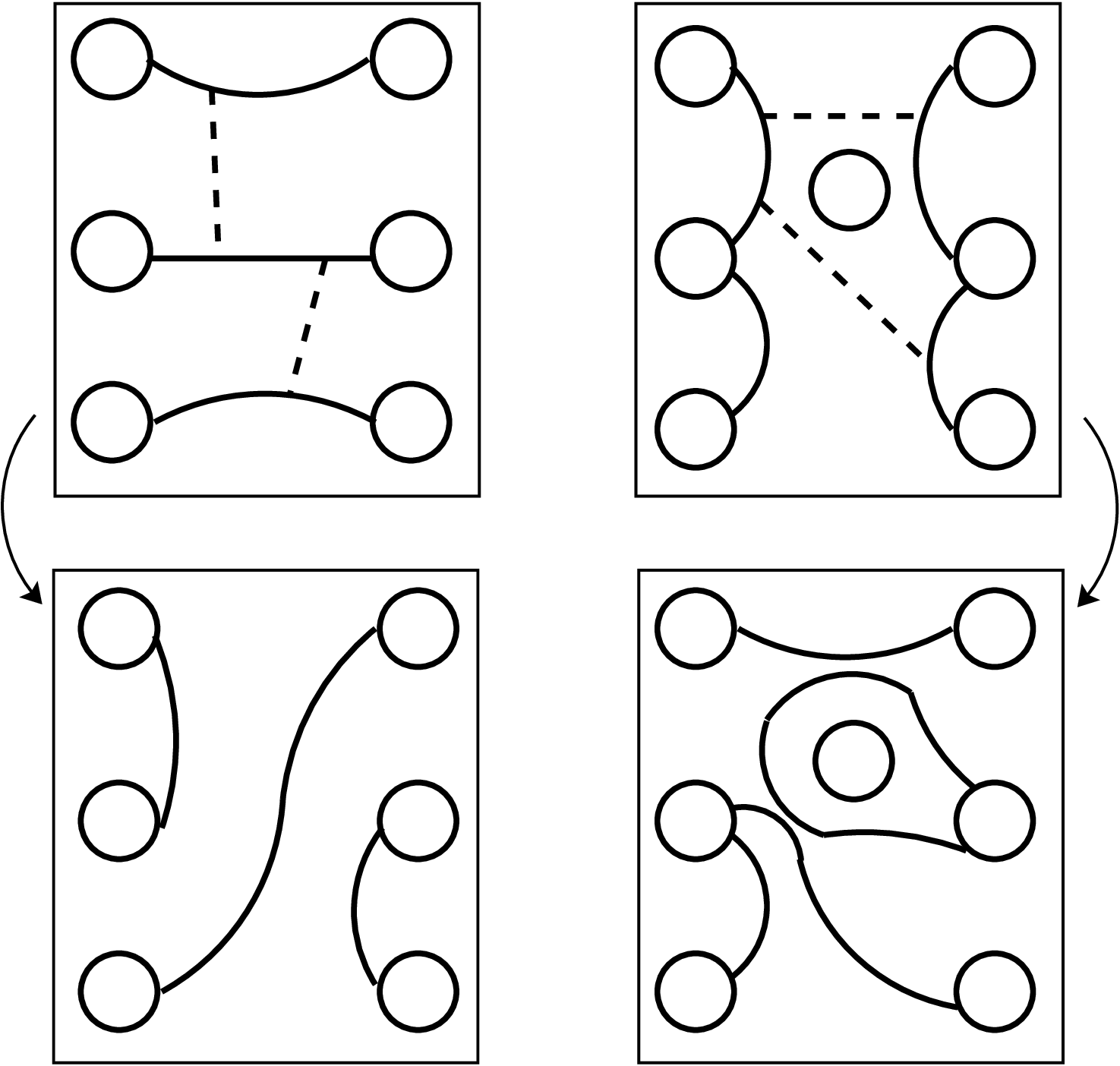}
\caption{An example showing how attaching bands creates disjoint curves depending on the positioning of the endpoints of the bands. The top diagram on the left and right shows pre-active arcs with the bands. The second row on each side shows the post-active arcs. On the left, the bands are incident to opposite sides of the same pre-active arc and on the right the bands are incident to the same side of a pre-active arc.}
\label{fig:doubleBand}
\end{center}
\end{figure}

\textbf{Claim 2:} Every arc or loop in the intersection $S_{s_0} \cap T_t$ that is trivial in $T_t$ must also be trivial in $S_{s_0}$.

Let $F_0$ be the (possibly empty) thin surface immediately below $S$ and let $F_1$ be the (possibly empty) thin surface immediately above $S$. Recall that they are incompressible and $\boundary$-incompressible in $N$.

If there were a trivial loop or arc of $S_{s_0} \cap T_t$ in $T_t$ that was essential in $S_{s_0}$, then an innermost such loop or outermost such arc $\beta$ would bound or cobound a disc $D$ in $T_t$. Isotope $D$ fixing $\beta$ so that the number of components of  $D\cap (F_0 \cup F_1 \cup S_{s_0})$ is minimized. Suppose $\alpha$ is an outermost arc or innermost loop of $D\cap (F_0 \cup F_1 \cup S_{s_0})$ in $D$. If $\alpha$ is contained in $F_0\cup F_1$, then, by irreducibility of $N$ and incompressibility and $\boundary$-incompressibility of $F_0 \cup F_1$, we can isotope $D$ to decrease the number of components of  $D\cap (F_0 \cup F_1 \cup S_{s_0})$. If $\alpha$ is contained in $S_{s_0}$, then it must be trivial in $S_{s_0}$ since $\beta$ is an innermost essential loop or an outermost essential arc. Since $\alpha$ is trivial in both $S_{s_0}$ and $D$ and since $N$ is irreducible and boundary irreducible, there is an isotopy of $D$ that decreases the number of components of $D\cap (F_0 \cup F_1 \cup S_{s_0})$. In either case, we arrive at a contradiction, so we can assume the interior of $D$ is disjoint from $F_0 \cup F_1 \cup S_{s_0}$. Hence, after an isotopy, the disc $D$ is completely contained in either $(S_{s_0})_{\dn}$ or $(S_{s_0})_{\up}$, contradicting the assumption that $(t, {s_0})$ is disjoint from $Q_a$ and $Q_b$ for all $t$. We conclude that every arc or loop of $S_{s_0} \cap T_t$ is either essential in $T_t$ or trivial in both surfaces.\qed(Claim 2)

On the other hand, a loop or arc in the intersection may be trivial in $S_{s_0}$ but essential in $T_t$. In fact, for values of $t$ near $0$, such loops/arc must exist and will bound compressing discs and boundary compressing discs, respectively, in $(T_t)_\dn$. For $t$ near $1$, such loops and arcs will bound discs in $(T_t)_\up$. Since $T$ is strongly $\boundary$-irreducible, Gabai's argument (as in the proof of Lemma~\ref{Lem: ess surface}) shows that there exists a maximally essential interval.\end{proof}

Finally, we observe:
\begin{corollary}\label{splitbound}
Let $T$ be a $\tau$-sloped Heegaard surface for $N$ and let $S$ be a strongly $\boundary$-irreducible thick surface for a generalized $\sigma$-sloped Heegaard splitting of $N$ satisfying conclusions (1), (2), (4), (5) of Theorem \ref{ThinPosition1}. Assume that $\Delta(\sigma,\tau) \geq 1$. Then
\[
\Delta b_{\min}(T)(d_{\mc{AC}}(T) - 2) \leq \frac{4g(S) - 4|S| + 2|\boundary_0 S|}{|\boundary_L S|} + 2
\]
and
\[
\Delta b_{\min}(T)(d_\mc{C}(T) - 4) \leq \frac{8g(S) - 8|S| + 4|\boundary_0 S|}{|\boundary_L S|} + 4
\]
\end{corollary}

\begin{proof}
If $T$ is weakly $\boundary$-reducible, by Lemma~\ref{Lem: T weak red}, the desired inequalities hold. We assume, therefore, that $T$ is strongly $\boundary$-irreducible.

Let $\phi_S$ and $\phi_T$ be (partial) sweepouts corresponding to $S$ and $T$ so that $\phi_T \times \phi_S$ is generic. By hypothesis, if $S$ is a thick surface for a sloped generalized Heegaard surface, then the thin surfaces for that sloped generalized Heegaard surface are essential. Since $L$ is a link, they are incompressible and $\boundary$-incompressible. Since $S$ is strongly $\boundary$-irreducible, by Lemma~\ref{lem:spanning}, $\phi_S$ does not span $\phi_T$ and, by definition of strong $\boundary$-irreducibility, $Q_a$ and $Q_b$ are both non-empty.

Suppose that $A$ and $B$ are two distinct regions of the graphic adjacent to a vertex $(t_0, s_0)$ of the graphic of valence 2. As $(t_0,s)$ passes through $(t_0, s_0)$, a center tangency and a saddle tangency between the surfaces $T_{t_0}$ and $S_{s}$ are cancelled. Hence, a point of $A$ belongs to $Q_a$ or $Q_b$ if and only if a point of $B$ belongs to $Q_a$ or $Q_b$ respectively.

Let $[0,s_0]$ be the largest interval such that $[0,1]\times [0,s_0]$ is disjoint from $Q_b$. Since $\phi_S$ does not span $\phi_T$, $s_0 \neq 0$. Note that the line defined by $s = s_0$ must pass through a vertex of the graphic. By the previous paragraph, that vertex has valence 4. Let $\epsilon > 0$ be small enough so that there is no vertex of the graphic in the region $\{(t,s) : s_0 - \epsilon \leq s < s_0\}$.  By the choice of $s_0$, the line $s =  s_0 - \epsilon$ is disjoint from $Q_b$. If it is also disjoint from $Q_a$, then $\phi_S$ essentially splits $\phi_T$ at $s_0 - \epsilon$. If the line $s = s_0 - \epsilon$ intersects $Q_a$, then $\phi_S$ weakly splits $\phi_T$ at $s_0$. In either case, by Lemma~\ref{lem:uvinterval},  $\phi_T$ is adapted to $S_{s_0}$ and there exists $s'_0 \in (0,1)$ such that there exists a maximally essential interval $[a, b] \subset (0,1)$ for the sweepout $\phi_T \times \phi_S(s'_0)$ of $N$. Our result then follows from Theorem~\ref{thm:distanceL}.
\end{proof}

\section{Alternately sloped Heegaard surfaces}\label{sec: bridge surface bounds}

Rather than examining  elementary consequences of Corollary~\ref{splitbound}, we move on to consider the general case for when $\ob{S}$ is a, possibly weakly reducible, bridge surface. For simplicity, we examine only the case when $L$ is a knot in a closed 3-manifold.

\begin{theorem}\label{Thm:Nonmerid bridge}
Assume that $L$ is a knot in a closed, orientable 3-manifold $M$ having irreducible and $\boundary$-irreducible exterior. Let $M'$ be the result of a non-trivial Dehn surgery on $L$ and let $g$ be the Heegaard genus of $M'$. Let $L^*$ be the surgery dual to $L$. Then one of the following holds:
\begin{enumerate}
\item We have
\[
b(L)(d_{\mc{AC}}(L) - 2) \leq \max(1, 2g)
\]
and
\[
b(L)(d_{\mc{C}}(L) - 4) \leq \max(2, 4g)
\]
\item $M(L)$ has a Heegaard surface of genus $g$.
\item $M$ contains an essential surface of genus strictly less than $g$ intersecting $L$ transversally at most twice.
\item $M(L)$ contains an essential surface of genus strictly less than $g$ and $L^*$ is isotopic into the surface. Furthermore, the dual surgery on $L^*$ creating $M$ from $M'$ has surgery slope equal to the slope of the boundary of the surface after $L^*$ has been isotoped to lie in it.
\end{enumerate}
\end{theorem}
\begin{proof}

As before we let $N=M(L)$ and for any surface $\ob{X}$ in $M$ we let $X=N \cap \ob{X}$. In what follows, whenever we have shown an  inequality for $d_\mc{AC}$, we will deduce the corresponding inequality for $d_\mc{C}$ by using Lemma \ref{Lem: relating AC and C}.

Let $L^* \subset M'$ be the surgery dual to $L$. Let $\ob{S}$ be a bridge surface for $(M',L^*)$ chosen so that $\ob{S}$ has genus $g$ and, out of all such bridge surfaces, minimizes $|L^* \cap \ob{S}|$. In particular, $\ob{S}$ is not stabilized or perturbed. If $\ob{S}$ is removable, $\ob{S}$ is isotopic to a Heegaard surface for $N$ and we have conclusion (2). Assume, therefore, that $\ob{S}$ is not removable.  Let $\sigma$ be the slope in $\boundary_L N$ which is represented by $\boundary S$.

Let $\ob{T}$ be a bridge surface for $(M,L)$ so that $g(\ob{T})$ is the Heegaard genus of $M$ and so that $b(\ob{T}) = b(L)$.  We will show that either conclusions (2), (3), or (4) hold or that:
\begin{equation}\label{Ineq-AC}
b(T)(d_{\mc{AC}}(T) - 2) \leq \max(1, 2g)
\end{equation}

In which case, we can conclude that if (2), (3) and (4) do not hold, then since this inequality holds for all bridge surfaces $\ob{T}$ such that $g(\ob{T})$ is the genus of $M$ and $b(\ob{T}) = b(L)$, we will have deduced that conclusion (1) holds.

By Theorem~\ref{ThinPosition1} (applied to $\ob{S}$) there is a $\sigma$-sloped generalized Heegaard splitting $(\mc{F},\mc{S})$ with the following properties:
\begin{enumerate}
\item[(T1)] Each component of $\mc{F}$ is essential in $N$.
\item[(T2)] Each component of $\mc{S}$ is strongly irreducible in its component of the closure of $N \setminus \inter{\eta} (\mc{F})$.
\item[(T3)] Each component $J$ of $\mc{S} \cup \mc{F}$ has genus no greater than $g$, $|\boundary_L J| \leq |\boundary_L S|$ and $J$ is obtained by compressing $S$ in $N=M(L)$. Furthermore if $\mc{F}$ is non-empty, at least one such compression has taken place for each component of $\mc{F} \cup \mc{S}$.
\item[(T4)] No component of $\mc{S}$ is perturbed or removable in its component of the closure of $N \setminus \inter{\eta} (\mc{F})$.
\item[(T5)] Each component of $\mc{S}$ is separating in $N$.
\end{enumerate}

We first consider the case when there is a thin surface with boundary. In this case we use Theorem~\ref{Thm: Ess Surf} to get the desired bound. Subsequently, we may then assume that $\mc{F}$ is closed. We then single out a particular component $S^*$ of $\mc{S}$. The remaining cases correspond to whether $S^*$ is strongly $\boundary$-irreducible or cancellable.

\textbf{Case 1:} There is a component of $\mc{F}$ with non-empty boundary.

Let $\ob{C} \subset M' \setminus \inter{\eta}(\ob{\mc{F}} \cup \ob{\mc{S}})$ be a compressionbody component so that $\boundary_- \ob{C} \cap L^* \neq \nil$. Since $\boundary_+ \ob{C}$ is separating in $M'$, $|\boundary_- \ob{C} \cap L^*| \geq 2$. Let $\ob{F} = \boundary_- \ob{C}$. ($\ob{F}$ may be disconnected). Observe that no component of $F$ is a disc or sphere, since $F$ is essential in $N$ and $N$ is irreducible and boundary-irreducible. Since $\ob{F}$ is obtained by compressing $\boundary_+ \ob{C}$, we have
\[ g(\ob{F}) \leq g(\boundary_+ \ob{C}) \leq g(\ob{S}) = g.\]

Apply Theorem~\ref{Thm: Ess Surf} to $F$ in place of $S$ to deduce
\[
b(T)(d_{\mc{AC}}(T) - 2) \leq \frac{4g(F) - 4|F|}{|\boundary_L F|} + 2
\]
If $g(F) = 0$, we have
\[
b(T)(d_{\mc{AC}}(T) - 2) < 2
\]
Since the left hand side is an integer, we conclude that \[b(T)(d_\mc{AC}(T) - 2) \leq 1,\] as desired.

If $g(F) \geq 1$, since $|\boundary_L F| \geq 2$, we have
\[
b(T)(d_\mc{AC}(T) - 2) \leq 2g(F) \leq 2g(S),
\]
as desired. \qed(Case 1)

Henceforth, we assume that $\mc{F}$ is closed. The 3-manifold $N \setminus\mc{F}$ has a component $N'$ which intersects $L^*$. Let $S^* = \mc{S} \cap N'$.

\textbf{Case 2:} $S^*$ is weakly $\boundary$-reducible in $N$.

Since $\mc{\ob{F}}$ is disjoint from $L^*$ and $L^*$ is connected, then $L^*$ is contained in $N'$. Since the boundary of the closure of $N'$ is essential in $N$, $L^*$ is contained in $N'$ and $N$ is irreducible, then any compressing or boundary compressing disk for $S^*$ in $N$ can be isotoped to lie in $N'$. Thus, $S^*$ is weakly $\boundary$-reducible but strongly irreducible in $N'$. Hence, $S^*$ must be cancellable or perturbed in $N'$. By property (T4), $S^*$ cannot be perturbed, so it must be cancellable. Let $\ob{R}$ be a cancelled bridge surface  (obtained by cancelling $\ob{S^*}$) and let $\rho$ be the slope of $\boundary R$ on $\boundary_L N$. By Lemma~\ref{Lem:Cancellable}, the surface $R$ is essential in $N$ and one of the following holds:
\begin{enumerate}
\item[(Ca)] after Dehn filling $\boundary M(L)$ with slope $\rho$ and capping $\boundary R$ with discs, we obtain an incompressible surface in the filled manifold.
\item[(Cb)] $L^*$ is isotopic into a component $\ob{F} = F$ of $\ob{\mc{F}}$ adjacent to $\ob{S^*}$. Furthermore, in $M'$, $\ob{F}$ and $\ob{S^*}$ bound a product compressionbody.
\end{enumerate}

\textbf{Case 2a:} $\Delta(\rho,\tau) \geq 1$.

Since $\Delta(\rho,\tau) \geq 1$, we can apply Theorem~\ref{Thm: Ess Surf} to $R$. In this case, (following the arithmetic of Case 1), we conclude that
\[
b(T)(d_{\mc{AC}}(T) - 2) \leq \max(1,2g(R)) \leq \max(1,2g(S))
\]
since $|\boundary R| \geq 2$ and $g(R) = g(S^*) \leq g(S)$.

\textbf{Case 2b:} $\Delta(\rho,\tau) = 0$.

If $R$ is connected, then it has genus one less than the genus of $S^*$. If $R$ is disconnected, then the sum of the genera of its components is equal to the genus of $S^*$.  Since $L^*$ is connected and since its exterior is $\boundary$-irreducible, no component of $R$ is a disc.

If (Ca) occurs, we have Conclusion (3). Assume, therefore, that (Cb) occurs.

The surface $S^*$ is obtained by compressing $S$ in $N$. Since $\ob{\mc{F}} \neq \nil$, there is at least one compression. Let $D \subset N$ be the first compressing disc for $S$ in a sequence of compressions leading to $S^*$ and let $S'$ be the surface resulting from compressing along $D$. If $\boundary D$ doesn't separate $S$, then the compression strictly reduces the genus of $S$ and we can conclude that $g(S^*) \leq g(S') < g(S)$. If $\boundary D$ does separate $S$, then either both components of $S'$ have genus strictly less than the genus of $S$ or one of them is a planar surface $P$. Since $|\ob{S^*} \cap L^*| = 2$ and since $\ob{S^*}$ is obtained via compressions in $N$ from $\ob{S}$, we must have $|\ob{S} \cap L^*| = 2$. Each thick surface in $\ob{\mc{S}}$ is separating, so $P$ is an annulus and $S' \setminus P$ is closed. Continuing on through the sequence of compressions, we discover that if $g(S^*) = g(S)$ then $S^*$ must be a closed surface. But this contradicts the fact that $|L^* \cap \ob{S^*}| = 2$. Hence, $g(\ob{R}) = g(\ob{S^*}) < g(S)$. Since $\ob{F}$ is parallel to $\ob{R}$, we have $g(\ob{F}) < g(S)$. Since all components of $\mc{F}$ are essential in $N$ and are closed surfaces, we have Conclusion (4).

\textbf{Case 3:} $S^*$ is strongly $\boundary$-irreducible in $N$.

By Corollary~\ref{splitbound}, we have
\[
b(T)(d_{\mc{AC}}(T) - 2) \leq \frac{4g(S^*) - 4}{|\boundary_L S^*|} + 2
\]

Since $|\boundary_L S^*| \geq 2$ and $g(S^*) \leq g(S)$, we again get Inequalities \eqref{Ineq-AC}. \qed(Case 3)
\end{proof}

\section{Bounding distance and bridge number}\label{sec:ImprovingBridgeBound}
In Theorem~\ref{Thm:Nonmerid bridge}, the possibility that there is an essential meridional surface of genus $g$ with two boundary components, that there is a Heegaard surface of genus $g$ for the complement of $L$, or that there is a closed essential surface in the exterior of $L$ ruin the ability to bound the distance of $L$ purely in terms of the genus $g(S)$. In this section, we observe that a theorem of Tomova allows us to bound distance even in these cases.

\begin{theorem}[Theorems 5.7 and 10.3 of~\cite{To07}]
\label{MaggyThm}
Suppose that $L$ is a non-trivial knot in a closed, orientable, irreducible 3-manifold $M$. Let $\ob{T}$ be a bridge surface for $(M,L)$.
\begin{itemize}
\item Suppose that $\ob{S} \subset M$ is a surface transverse to $L$ such that $S \subset M(L)$ is not a sphere and is essential in $M(L)$. Then
\[
d_\mc{C}(\ob{T}) \leq \max(3, 2g(S) + |\ob{S} \cap L|).\]
 In fact, if, in addition, $\ob{S} = S$ and if $S$ is a torus, then $d_\mc{C}(\ob{T}) \leq 2$.
\item If $\ob{S}$ is a  Heegaard surface for $M(L)$, then
\[
d_{\mathcal{C}}(\ob{T}) \leq 2g(\ob{S}).
\]
\end{itemize}
\end{theorem}

We can combine these results with our previous work to obtain a bound on distance.

\begin{theorem}\label{Bounding dist}
Assume that $M$ is closed, orientable and irreducible and that $L \subset M$ is a knot with irreducible and $\boundary$-irreducible exterior $N = M(L)$. Let $M'$ be the result of non-trivial Dehn surgery on $L$ and let $g$ be the Heegaard genus of $M'$.  Then
\[ d_{\mc{C}}(L) \leq \max\Big(\frac{2}{b(L)} + 4, \frac{4g}{b(L)} + 4, 2g+ 2\Big).\]
\end{theorem}
\begin{proof}
Since $3$ is always less than the right hand side of the desired inequality, we may assume that $d_\mc{C}(L) \geq 4$.  By Theorem~\ref{Thm:Nonmerid bridge}, one of the following occurs:
\begin{enumerate}
\item We have
\[
b(L)(d_{\mc{C}}(L) - 4) \leq \max(2, 4g)
\]
\item $M(L)$ has a Heegaard surface of genus $g$.
\item $M$ contains an essential surface of genus strictly less than $g$ intersecting $L$ transversally at most twice.
\item $M(L)$ contains an essential surface of genus at most $g-1$ and $L^*$ is isotopic into the surface. Furthermore, the dual surgery on $L^*$ creating $M$ from $M'$ has surgery slope equal to the slope of the boundary of the surface after $L^*$ has been isotoped to lie in it.
\end{enumerate}

If conclusion (1) occurs, then
\[ d_{\mc{C}}(L) \leq \frac{\max(2, 4g)}{b(L)} + 4, \]
as desired.

Suppose that conclusion (2) occurs. Let $\ob{T}$ be a minimal bridge surface for $(M,L)$ and let $S = \ob{S}$ be a Heegaard surface of genus $g$ for $M(L)$.  By Theorem~\ref{MaggyThm},
\[ d_\mc{C}(\ob{T}) \leq 2g \leq \max\Big(\frac{2}{b(L)} + 4, \frac{4g}{b(L)} + 4, 2g+ 2\Big). \]

Suppose now that conclusion (3) occurs. Among all surfaces of genus at most $g(S) - 1$ which intersect $L$ at most twice and which are essential in $M$, choose $\ob{F}$ to minimize the pair $(g(\ob{F}), |\ob{F} \cap L|)$ lexicographically.

We begin by showing that $F = \ob{F} \cap N$ is essential.

Suppose that $D$ is a compressing disc for $F$. Since $\ob{F}$ is essential in $M$, $\boundary D$ bounds a disc in $\ob{F}$.  Thus $\boundary D$, together with boundary components of $F$ bounds a subsurface $A\subset F$ which is either an annulus or a pair of pants.  If $A$ is an annulus, then $N$ is $\boundary$-reducible, a contradiction. If $A$ is a pair of pants, compressing $F$ using $D$ creates a meridional annulus $B$ in $N$. If $B$ were not $\partial$-parallel, then $\ob{B}$ would be a surface of lower complexity, contradicting our choice of $\ob{F}$. Hence, $B$ is $\boundary$-parallel. There is, therefore, an isotopy of $\ob{F}$ to be disjoint from $L$, contradicting our choice of $\ob{F}$. Hence, $F$ is incompressible.

If $F$ is $\boundary$-compressible then, since $\boundary N$ is a torus, either $F$ is compressible or $F$ is a $\boundary$-parallel annulus. The former possibility contradicts the previous paragraph and the latter means that we can isotope $\ob{F}$ to be disjoint from $L$. That also contradicts our choice of $\ob{F}$. Hence, $F$ is $\boundary$-incompressible.

Consequently, $F$ is essential.

By Theorem~\ref{MaggyThm}, if $\ob{T}$ is a bridge surface for $(M,L)$, then
\[d_{\mc{C}}(\ob{T}) \leq \max(3, 2g(\ob{F}) + 2) \leq \max \Big(\frac{2}{b(L)} + 4, \frac{4g}{b(L)} + 4, 2g+ 2\Big).\]

Since, whenever $\ob{T}$ is a minimal bridge surface for $(M,L)$ we have
\[
d_\mc{C}(\ob{T}) \leq \max\Big(\frac{2}{b(L)} + 4, \frac{4g}{b(L)} + 4, 2g+ 2\Big),
\]
we have
\[
d_\mc{C}(L) \leq \max\Big(\frac{2}{b(L)} + 4, \frac{4g}{b(L)} + 4, 2g+ 2\Big),
\]
as desired.

Finally, if conclusion (4) occurs we deal with it in a similar way to how we dealt with conclusion (2), only obtaining a better bound.
\end{proof}

We can now bound the distance of knots admitting exceptional and cosmetic surgeries.

\begin{theorem}\label{Bounding distance - special}
Suppose that $L$ is a knot in a closed, orientable, irreducible 3-manifold $M$ which has irreducible and $\boundary$-irreducible exterior. Let $M'$ be a 3-manifold obtained by non-trivial Dehn surgery on $L$. Then the following hold:
\begin{enumerate}
\item If $M = S^3$ and if $M'$ is a lens space, then $d_\mc{C}(L) \leq 4$.
\item If $M = S^3$ and if $M'$ is a small Seifert fibered space, then $d_\mc{C}(L) \leq 6$.
\item If $M$ is hyperbolic and if $M'$ is not hyperbolic, then $d_\mc{C}(L) \leq 12$.
\item If the Heegaard genus of $M'$ is at most $g$, then $d_{\mc{C}}(L) \leq \max(6,4g + 4)$.
\end{enumerate}
\end{theorem}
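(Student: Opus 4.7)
The plan is to specialize Theorem~\ref{Bounding dist} to the Heegaard genus of the surgered manifold $M'$ in each case, and to invoke the Geometrization Theorem in part (3). Before applying Theorem~\ref{Bounding dist}, I will dispose of its hypothesis that the exterior $N$ contains no essential meridional annulus: if such an annulus exists, then $L$ has an essential tangle decomposition and $d_{\mc{C}}(L) \leq 2$, which is below every claimed bound. I may therefore assume throughout that $N$ has no essential meridional annulus, so Theorem~\ref{Bounding dist} applies and gives
\[
d_{\mc{C}}(L) \leq \max\Big(\tfrac{2}{b(L)} + 4,\ \tfrac{4g(S)}{b(L)} + 4,\ 2g(S) + 2\Big).
\]

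For part (4) with $g(S) = g$, the displayed bound combined with $b(L) \geq 1$ (and $b(L) \geq 3$ when $M = S^3$) collapses to $\max(6, 4g + 4)$, since $4g + 4 \geq 2g + 2$. For part (2), $M'$ is small Seifert fibered and so has Heegaard genus at most $2$ by~\cite{BZ}; plugging $g(S) = 2$ and $b(L) \geq 3$ gives $d_{\mc{C}}(L) \leq \max(14/3, 20/3, 6)$, which rounds down to $6$ by integrality of distance. For part (1), $M'$ is a lens space so $g(M') = 1$; plugging $g(S) = 1$ and $b(L) \geq 3$ gives $d_{\mc{C}}(L) \leq \max(14/3, 16/3, 4) \leq 5$ by integrality. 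To sharpen to $d_{\mc{C}}(L) \leq 4$ when $b(L) \geq 4$, I will return to the underlying Theorem~\ref{Thm:Nonmerid bridge}: its conclusion (1) gives $b(L)(d_{\mc{AC}}(L) - 2) \leq 2$, so $d_{\mc{AC}}(L) \leq 2$ whenever $b(L) \geq 3$, and Lemma~\ref{Lem: relating AC and C} yields $d_{\mc{C}}(L) \leq 2 d_{\mc{AC}}(L) \leq 4$. The alternative conclusions (2) and (3) of that theorem must be eliminated in this case: (2) would require a genus-$1$ Heegaard surface for $N$ and thus force $L$ to be the unknot, violating $b(L) \geq 3$; (3) would produce an essential genus-$0$ surface in $S^3$ meeting $L$ at most twice, which by the irreducibility of $S^3$ and our no-meridional-annulus assumption is impossible.

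For part (3), the Geometrization Theorem asserts that a closed orientable non-hyperbolic $3$-manifold is reducible, toroidal, or Seifert fibered. Since $M$ is hyperbolic, its knot exterior contains no essential sphere or torus. If $M'$ contains an essential sphere or torus, then Theorem~\ref{Thm: Ess Surface} applies to an essential surface of genus at most $1$ and yields $\Delta \cdot b(L)(d_{\mc{AC}}(L) - 2) \leq 2$, from which $d_{\mc{AC}}(L) \leq 4$ and hence $d_{\mc{C}}(L) \leq 8$. Otherwise $M'$ is Seifert fibered with no essential sphere or torus, so $M'$ is a lens space or small Seifert fibered space and $g(M') \leq 2$; applying part (4) gives $d_{\mc{C}}(L) \leq \max(6, 12) = 12$. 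Combining the two subcases yields $d_{\mc{C}}(L) \leq 12$.

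The main obstacle is the case-by-case elimination of the alternative conclusions of Theorem~\ref{Thm:Nonmerid bridge}---specifically, ruling out an unexpected low-genus Heegaard surface for $N$ or an essential surface in $M$ meeting $L$ at most twice---and disposing of the essential-meridional-annulus exception at the outset. Once those alternatives are handled, the arithmetic of the bound is routine.
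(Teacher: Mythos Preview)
Your overall strategy matches the paper's, but there is a genuine gap in parts (3) and (4) when $M \neq S^3$. You assert that ``since $M$ is hyperbolic, its knot exterior contains no essential sphere or torus,'' and you rely on this to invoke Theorem~\ref{Thm: Ess Surface} and Theorem~\ref{Bounding dist}. That assertion is false: hyperbolicity of $M$ says only that $M$ itself is irreducible and atoroidal, not that $M(L)$ is. For instance, if $L$ lies in a $3$--ball in $M$ then $M(L)\cong M\#\big(S^3(L)\big)$ is reducible, and if $L$ lies inside a knotted solid torus in $M$ then $M(L)$ can contain an essential torus. Both Theorem~\ref{Bounding dist} (which requires $N$ irreducible) and Theorem~\ref{Thm: Ess Surface} (which requires $N$ irreducible, $\boundary$--irreducible, and free of closed essential surfaces of the relevant genus) therefore cannot be applied on the basis of your reduction, which only eliminated essential \emph{meridional annuli}.

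The paper closes this gap differently: instead of disposing only of the meridional--annulus case, it first reduces to the case where the knot $L$ is hyperbolic, citing~\cite{BS} to conclude $d_{\mc{C}}(L)\leq 2$ whenever $L$ is non-hyperbolic. Once $L$ is hyperbolic, $M(L)$ is irreducible, $\boundary$--irreducible, atoroidal, and anannular, and all the hypotheses you need are in force. Your annulus reduction is subsumed by this step; you simply need to broaden it to cover reducible and toroidal exteriors as well.

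A minor remark on part~(1): your treatment of the $b(L)\geq 4$ sharpening---returning to Theorem~\ref{Thm:Nonmerid bridge} and ruling out its alternatives (2) and (3) in $S^3$---is more explicit than the paper's proof, which just cites Theorem~\ref{Bounding dist}. A direct application of the displayed bound in Theorem~\ref{Bounding dist} with $g(S)=1$ and $b(T)=4$ gives only $d_{\mc{C}}(L)\leq 5$; the sharper bound $4$ really does require going back to the $\mc{AC}$ inequality $b(T)(d_{\mc{AC}}(T)-2)\leq 2$ and then using Lemma~\ref{Lem: relating AC and C}, as you do. Your argument there is correct and in fact already yields $d_{\mc{C}}(L)\leq 4$ for $b(L)\geq 3$, not just $b(L)\geq 4$.
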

\begin{proof}
By the Geometrization Theorem of Thurston and Perelman ~\cites{P1,P2,P3} if $M'$ is non-hyperbolic, then it is reducible, toroidal, or a small Seifert-fibered space. Small Seifert fibered spaces have Heegaard genus at most 2 \cite{BZ}. If $M'$ is reducible or toroidal, conclusion (3) follows from Theorem~\ref{Thm: Ess Surface}.

By Theorem \ref{MaggyThm}, if $M(L)$ contains an essential meridional annulus or an essential torus, then $d_\mc{C}(L) \leq 4$. Assume, therefore, that there is no such annulus or torus. Also, if $M = S^3$, we may assume that $b(L) \geq 3$, as otherwise $d_\mc{C}(L) = -\infty$.

Suppose, first, that $M'$ is a lens space. Since $M'$ is a lens space, it has a genus $g = 1$ Heegaard surface $\ob{S}$. Since $M(L)$ is irreducible and $\boundary$-irreducible, it does not have a Heegaard surface of genus $g=1$ or an essential sphere. If $M$ contains an essential sphere intersecting $L$ once or twice, then we either violate the irreducible and $\boundary$-irreducibility of $M(L)$ or $M(L)$ contains an essential annulus, a contradiction.  By Theorem \ref{Thm:Nonmerid bridge},
\[
b(L)(d_\mc{AC}(L) - 2) \leq 2.
\]
Hence,
\[
d_\mc{AC}(L) \leq \frac{2}{b(L)} + 2 \leq 4.
\]
We obtain conclusion (3) by applying Lemma \ref{Lem: relating AC and C} and concluding that $d_\mc{C}(L) \leq 8$. If, in addition, $M = S^3$, then $b(L) \geq 3$, and since $d_\mc{AC}(L)$ is an integer, we have $d_\mc{AC}(L) \leq 2$. By Lemma \ref{Lem: relating AC and C}, $d_\mc{C}(L) \leq 4$, giving us conclusion (1).

Similarly, if $M'$ is a small Seifert fibered space, conclusions (2) and (3) follow from Theorem~\ref{Bounding dist} by setting $g(S) = 2$, and using the fact that if $M = S^3$ then $b(L) \geq 3$ and that $d_\mc{C}(L)$ is an integer or $-\infty$. More generally, we note that if $M'$ has Heegaard genus $g$, then
\[
\max\left(\frac{2}{b(L)} + 4, \frac{4g}{b(L)} + 4, 2g+ 2\right) \leq \max(6,4g + 4),
\]
and so conclusion (4) also follows from Theorem ~\ref{Bounding dist}.
\end{proof}

One particular application of Conclusion (4) from Theorem \ref{Bounding distance - special} is the following Corollary. The proof follows immediately from the fact that the Heegaard genus of $S^3$ is 0. The corollary says, in essence, that knots of high bridge distance do not admit non-trivial $S^3$ surgeries.

\begin{corollary}\label{Cor: S^3 surgeries}
Suppose that $L$ is a knot in a closed, orientable, irreducible 3-manifold $M$ which has irreducible and $\boundary$-irreducible exterior. If a non-trivial Dehn surgery on $L$ produces $S^3$, then $d_\mc{C}(L) \leq 6$.
\end{corollary}

We can also obtain an improved bound on the bridge number of knots admitting non-trivial reducible or toriodal surgeries.
\begin{theorem}\label{apps1}
Suppose that $L \subset S^3$ is a knot with $d_\mc{C}(L) \geq 3$. Then:
\begin{enumerate}
\item If non-trivial Dehn surgery on $L$ produces a reducible 3-manifold, then $b(L) \leq 5$.
\item If non-trivial Dehn surgery on $L$ produces a toroidal 3-manifold, then $b(L) \leq 6$. Furthermore, if the surgery slope is non-longitudinal, then $b(L) \leq 5$.
\end{enumerate}
\end{theorem}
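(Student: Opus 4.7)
The plan is to apply Theorem~\ref{Bounding Bridge} to essential surfaces obtained by intersecting the reducing sphere or essential torus in the surgered manifold with $M(L)$. Fix a minimal bridge sphere $T$ for $L$ realizing $d_\mc{C}(L)$, so that $b(T)=b(L)\geq 3$ and $d_\mc{C}(T)\geq 3$. I first observe that $T$ must be strongly $\boundary$-irreducible: a pair of disjoint compressing or $\boundary$-compressing disks on opposite sides would give $d_\mc{AC}(T)\leq 1$, hence $d_\mc{C}(T)\leq 2$ by Lemma~\ref{Lem: relating AC and C}, contradicting the hypothesis. In particular the hypotheses of Lemma~\ref{Lem: ess surface} will be in force once an essential surface $S\subset M(L)$ is produced.

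For (1), let $\ob{S}\subset M'$ be a reducing sphere chosen to minimize $|\ob{S}\cap L'|$, where $L'$ is the surgery dual of $L$. A standard cut-and-paste argument (a compressing or $\boundary$-compressing disk for $S:=\ob{S}\cap M(L)$ would let us surger $\ob{S}$ to reduce $|\ob{S}\cap L'|$, contradicting minimality) shows $S$ is essential in $M(L)$. It is planar with $g(S)=0$, $|S|=1$, $\boundary_0 S=\nil$, and $k:=|\boundary_L S|\geq 2$ (the case $k=1$ is ruled out because $[\ob{S}]=0$ in $H_2(M')$ makes the algebraic intersection with $L'$ zero, while $k=0$ is excluded by irreducibility of $M(L)$). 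Lemma~\ref{Lem: ess surface} then gives a sweepout adapted to $S$ with a maximally essential interval, and Theorem~\ref{Bounding Bridge} yields
\[ (b(T)-4)\Delta \;\leq\; \frac{-6}{k}+2, \]
which, for $k\geq 2$ and $\Delta\geq 1$, has right-hand side strictly less than $2$, forcing $b(L)=b(T)\leq 5$.

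For (2), choose an essential torus $\ob{T}_0\subset M'$ minimizing $|\ob{T}_0\cap L'|$. The same cut-and-paste argument shows $S:=\ob{T}_0\cap M(L)$ is essential with $g(S)=1$, $|S|=1$, $\boundary_0 S=\nil$, and $k:=|\boundary_L S|$. The case $k=0$ would give a closed essential torus in $M(L)$, forcing $L$ to be a satellite knot; but satellite knots have $d_\mc{C}(L)\leq 2$, contradicting our hypothesis, so $k\geq 1$. Lemma~\ref{Lem: ess surface} again provides a maximally essential interval, and Theorem~\ref{Bounding Bridge} gives
\[ (b(T)-4)\Delta \;\leq\; \frac{-2}{k}+2 \;\leq\; 2, \]
which yields $b(L)\leq 6$. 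When the surgery slope is non-longitudinal, the homological argument that every slope-$\sigma$ curve on $\boundary M(L)$ has nonzero image in $H_1(M(L))=\mathbb{Z}\mu$ forces the algebraically signed count of boundary circles of $S$ to vanish; since all boundary circles are parallel, this makes $k$ even, hence $k\geq 2$. The inequality then tightens to $(b(T)-4)\Delta\leq 1$ and, since $\Delta\geq 1$, gives $b(L)\leq 5$.

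The main obstacle is organizing the two minimization/cut-and-paste steps so that Theorem~\ref{Bounding Bridge} can actually be invoked, i.e., ensuring both that $S$ is essential in $M(L)$ (so that Lemma~\ref{Lem: ess surface} produces a maximally essential interval) and that $|\boundary_L S|\geq 1$ (so that the denominator in the Bounding Bridge inequality makes sense). The first is handled by minimizing $|\ob{S}\cap L'|$ or $|\ob{T}_0\cap L'|$ together with irreducibility of $M(L)$; the second is handled in the sphere case by the homological intersection argument and in the torus case by ruling out a closed essential torus in $M(L)$ via the distance hypothesis. The refinement in part (2) is then a short algebraic observation about the slope of $\boundary S$.
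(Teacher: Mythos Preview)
Your argument is correct and follows the same route as the paper's proof: pick a minimal bridge sphere $T$ with $d_\mc{C}(T)\geq 3$, observe it is strongly $\boundary$-irreducible, intersect a minimally chosen reducing sphere or essential torus with $M(L)$ to get an essential $S$, invoke Lemma~\ref{Lem: ess surface} to obtain a maximally essential interval, and then apply Theorem~\ref{Bounding Bridge}. Your added details---the homological check that $|\boundary_L S|\geq 2$ for the sphere, the use of the closed-essential-torus bound to rule out $k=0$ in the torus case, and the parity argument showing $k$ is even for non-longitudinal slopes---are exactly what the paper leaves implicit when it says ``the proof for the case when $L$ has a toroidal surgery is similar.''
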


\begin{proof}
Since $d_\mc{C}(L) \geq 3$, by definition $L$ is not the unknot, and so has irreducible and $\boundary$-irreducible exterior. Isotope $L$ into a bridge position with respect to a Heegaard sphere $\ob{T}$ in $S^3$ minimizing $b(L)$ and with $d_\mc{C}(T) \geq 3$. If non-trivial Dehn surgery on $L$ produces a reducible 3-manifold, in the surgered manifold choose a reducing sphere intersecting the core of the surgery torus minimally. Since the exteriors of knots in $S^3$ are irreducible, the intersection $S$ of the reducing sphere with $S^3(L)$ is an essential planar surface in $S^3(L)$. Since $\ob{T}$ has $d_\mc{C}(L) \geq 3$, the surface $T$ is strongly $\boundary$-irreducible.  By Lemma~\ref{Lem: ess surface}, there is a sweepout by $T$ adapted to $S$  so that there is a maximally essential interval.  By Theorem~\ref{Bounding Bridge},
\[
b(L) - 4 \leq \Delta(b(L) - 4) \leq \frac{4g(S) - 4}{|\boundary_L S|} + 2 < 2.
\]
Since $b(L)$ is an integer, $b(L) \leq 5$.

If $L$ has a toroidal surgery, then $M'$ has an essential torus $\ob{S}$ such that $S$ is essential in $N$. If $\boundary_L S = \nil$, then, by Theorem \ref{MaggyThm}, $d_\mc{C}(L) \leq 2$, a contradiction. Hence, we can assume $\boundary_L S \neq \nil$. The proof is now nearly identical to the proof of the reducing case in the previous paragraph.
\end{proof}

Likewise, we can also obtain a bound on the bridge number of high distance knots in terms of the genus of the knot.

\begin{theorem}\label{Bridge number and genus}
Suppose that $L$ is a knot in a homology sphere $M$ with irreducible and $\boundary$-irreducible exterior. If $d_\mc{C}(L) \geq 3$ then
\[
b(L) \leq 4g(L)+2,
\]
where $g(L)$ is the Seifert genus of $L$.
\end{theorem}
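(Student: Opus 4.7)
The plan is to apply Theorem~\ref{Bounding Bridge} directly to a minimal genus Seifert surface for $L$, since the hypothesis that $M$ is a homology sphere is exactly what guarantees such a surface exists. First I would choose a bridge surface $T$ that realizes both $b(L)$ and $d_\mc{C}(L)$, so $b(T)=b(L)$ and $d_\mc{C}(T) \geq 3$. The hypothesis $b(L) \geq 3$ in the case $M = S^3$ ensures $T$ has at least six punctures, and for other homology spheres $T$ has positive genus, so in all cases $\mc{C}(T)$ is connected. Since $d_\mc{C}(T) \geq 3$, Lemma~\ref{Lem: relating AC and C} gives $d_\mc{AC}(T) \geq 2$, so there are no disjoint compressing or $\partial$-compressing disks on opposite sides of $T$, i.e.\ $T$ is strongly $\partial$-irreducible.

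Next I would produce the Seifert surface. Because $M$ is a homology sphere, the longitude of $L$ is null-homologous in $N = M(L)$, so $L$ bounds a Seifert surface; take $S \subset N$ of minimal genus, so $g(S) = g(L)$, $|S| = 1$, $|\partial_L S| = 1$, and $|\partial_0 S| = 0$, with boundary slope $\sigma$ the longitude. For nontrivial $L$, the surface $S$ is essential: any compression or $\partial$-compression would produce a Seifert surface of strictly smaller genus (using irreducibility of $N$ to discard any closed sphere component), contradicting minimality. The trivial knot case is ruled out by the hypotheses: in $S^3$ the unknot has $b(L) = 1 < 3$, and in any homology sphere the unknot bounds a disk that provides disjoint compressing disks on either side of any bridge surface, forcing $d_\mc{C}(L) \leq 2$.

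With these ingredients in place, $\Delta(\sigma, \tau) = 1$ for $\tau$ the meridional slope of $T$. Since $T$ is strongly $\partial$-irreducible and $S$ is essential, Lemma~\ref{Lem: ess surface} provides a sweepout of $N$ by $T$ adapted to $S$ with a maximally essential interval. Plugging the parameters of $S$ into Theorem~\ref{Bounding Bridge} then gives
\[
(b(L) - 4)\cdot 1 \;\leq\; \frac{4g(L) - 4\cdot 1 + 2\cdot 0 - 2}{1} + 2 \;=\; 4g(L) - 4,
\]
which rearranges to $b(L) \leq 4g(L)$.

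The main conceptual hurdle is really just the observation that a homology sphere forces the existence of a Seifert surface, together with the standard fact that a minimal genus Seifert surface of a nontrivial knot is essential; once those are in hand, the proof is a clean substitution into Theorem~\ref{Bounding Bridge}, with the factor of $4$ arising from $-2\chi(S) = 4g(L) - 2$ divided by the single longitudinal boundary component.
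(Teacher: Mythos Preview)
Your proposal is correct and follows essentially the same route the paper indicates: pick a minimal bridge surface $T$ realizing $b(L)$ and $d_\mc{C}(L)$, take a minimal genus Seifert surface $S$ (available since $M$ is a homology sphere), use $d_\mc{C}(T)\geq 3$ to get $T$ strongly $\boundary$-irreducible, invoke Lemma~\ref{Lem: ess surface} to obtain a maximally essential interval, and then plug $|S|=1$, $|\boundary_L S|=1$, $|\boundary_0 S|=0$, $\Delta=1$ into Theorem~\ref{Bounding Bridge}. This is exactly the ``easy combination of Theorem~\ref{Bounding Bridge} and Lemma~\ref{Lem: ess surface}, as in the proof of Theorem~\ref{apps1}'' that the paper points to; your write-up is in fact more explicit than the paper about why $T$ is strongly $\boundary$-irreducible and why the degenerate case $g(L)=0$ is excluded (the paper would dispatch that by citing \cite{BS} to conclude that $d_\mc{C}(L)\geq 3$ forces $L$ to be hyperbolic, hence have irreducible, $\boundary$-irreducible exterior).
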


The proof is an easy combination of Theorem~\ref{Bounding Bridge} and Lemma~\ref{Lem: ess surface}, as in the proof of Theorem~\ref{apps1}.

We conclude with a few consequences for exceptional surgeries and cosmetic surgeries.

\begin{corollary}\label{Cor: exceptional bridge}
Suppose that $L \subset M$ is a hyperbolic knot in a closed non-Haken 3-manifold such that $d_\mc{C}(L) \geq 3$. Let $M'$ be a 3-manifold obtained by non-trivial Dehn surgery on $M$. Then the following hold:
\begin{enumerate}
\item If $M'$ is non-hyperbolic and if the tunnel number of $L$ is at least 2, then $b(L) \leq 8$
\item If $M'$ has Heegaard genus at most the Heegaard genus $g$ of $M$ and if $M(L)$ does not contain an essential surface of genus at most $g-1$, then $b(L) \leq 2g + 4$.
\end{enumerate}
\end{corollary}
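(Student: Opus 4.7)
The strategy is to mirror the proof of Theorem~\ref{Thm:Nonmerid bridge}, but to substitute Theorem~\ref{Bounding Bridge} for Theorem~\ref{thm:distanceL} at every stage. The hypothesis $d_\mc{C}(L) \geq 3$ is precisely what the ``constant path'' argument of Theorem~\ref{Bounding Bridge} needs, and it converts the distance-times-bridge bound into a bound on $b(L)$ alone in terms of the genus of an auxiliary surface. The hypotheses on tunnel number and non-Hakenness are exactly what is needed to dispose of the two alternative conclusions of Theorem~\ref{Thm:Nonmerid bridge}.

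For Part (2), I would put $L$ in minimal bridge position with respect to a bridge surface $\ob{T}$ realizing $b(L) = b(T)$ and $d_{\mc{C}}(L) = d_{\mc{C}}(T)$, so $T$ is strongly $\boundary$-irreducible. Take $\ob{S}$ to be a minimal genus Heegaard surface for $M'$, giving a $\sigma$-sloped Heegaard surface $S$ for $M(L)$ with $g(S) \leq g$ and surgery distance $\Delta \geq 1$. Apply Theorem~\ref{ThinPosition1} to $\ob{S}$; the hypothesis that the tunnel number of $L$ is at least $g$ rules out $\ob{S}$ being removable (otherwise $M(L)$ would have a Heegaard surface of genus $\leq g$), and produces a sloped generalized Heegaard splitting $(\mc{S}, \mc{F})$ with each component of genus at most $g$. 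If some component $F \subset \mc{F}$ has $\boundary F \neq \nil$, then $F$ is essential in $M(L)$; combining components as in the proof of Theorem~\ref{Thm:Nonmerid bridge} Case~1 one may assume $|\boundary_L F| \geq 2$. Lemma~\ref{Lem: ess surface} provides a sweepout by $T$ adapted to $F$ with a maximally essential interval, and Theorem~\ref{Bounding Bridge} gives $(b(L) - 4)\Delta \leq (4g - 6)/|\boundary_L F| + 2 \leq 2g$, whence $b(L) \leq 2g + 4$. Otherwise $\mc{F}$ is closed and the strongly irreducible component $S^*$ meeting $\boundary M(L)$ is either strongly $\boundary$-irreducible --- in which case the double-sweepout method of Section~\ref{sec:doublesweepout} used in Corollary~\ref{splitbound} produces a maximally essential interval and Theorem~\ref{Bounding Bridge} again yields $b(L) \leq 2g + 4$ --- or it is cancellable, and Lemma~\ref{Lem:Cancellable} delivers a cancelled bridge surface $R$ essential in $M(L)$, of summed genus at most $g$, with slope $\rho$ satisfying $\Delta(\rho, \sigma) = 1$. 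When $\rho$ is not the meridian $\mu$, $\Delta(\rho, \mu) \geq 1$ and Theorem~\ref{Bounding Bridge} applied to a component of $R$ finishes.

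For Part (1), Geometrization reduces to the cases where $M'$ is reducible, toroidal, or small Seifert-fibered. The small Seifert-fibered case follows immediately from Part (2) with $g = 2$, yielding $b(L) \leq 8$. For $M'$ reducible or toroidal, choose an essential sphere or torus in $M'$ intersecting $L'$ minimally; its restriction gives an essential planar or once-punctured-torus surface $S \subset M(L)$ with $|\boundary_L S| \geq 1$, and Theorem~\ref{Bounding Bridge} yields $(b(L) - 4)\Delta < 2$, so $b(L) \leq 5 < 8$ (compare Theorem~\ref{apps1}).

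The hard part will be the meridional subcase of Part (2): when $\rho = \mu$ (which is possible if the surgery distance from the meridian is $1$), Theorem~\ref{Bounding Bridge} does not directly apply because $\Delta(\rho, \mu) = 0$. To deal with it, I would cap off the meridional boundary of $R$ to obtain a closed surface $\ob{R}$ of genus at most $g$ in $M$. Since $M$ is non-Haken, $\ob{R}$ must be compressible, and by essentiality of $R$ in $M(L)$ every compressing disc for $\ob{R}$ in $M$ must meet $L$. Compressing along such discs produces either a smaller genus essential meridional surface (to which one iterates, invoking Theorem~\ref{meridonaldistbound} and the hypothesis $d_\mc{C}(L) \geq 3$ to derive a contradiction or a usable bound) or a Heegaard-type surface for the exterior of $L$ of genus at most $g$, which is ruled out by the tunnel number hypothesis. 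Pushing this residual analysis through is where the main technical effort lies.
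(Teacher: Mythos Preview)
Your overall strategy is exactly the paper's: rerun the case analysis of Theorem~\ref{Thm:Nonmerid bridge}, replacing the distance inequalities (Theorem~\ref{Thm: Ess Surf}, Corollary~\ref{splitbound}) by Theorem~\ref{Bounding Bridge}, and use the tunnel-number and non-Haken hypotheses to eliminate conclusions~(2) and~(3) of that theorem. Your treatment of Part~(1), of Case~1 (thin surface with boundary), of Case~3 ($S^*$ strongly $\boundary$-irreducible), and of Case~2 with $\rho\neq\tau$ is correct and matches the paper.

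Where you go astray is the ``hard part'': the meridional subcase $\rho=\tau$ is not hard at all, and your proposed workaround is both unnecessary and based on a misreading. You write that the capped-off surface $\ob{R}$ ``must be compressible'' in $M$ because $M$ is non-Haken, and then plan an iterated compression argument. But the last bullet of Lemma~\ref{Lem:Cancellable} states precisely the opposite: when $K=L'$ and the ambient manifold is closed, capping off $R$ after $\rho$-filling yields an \emph{incompressible} surface in $M(L)(\rho)$. With $\rho=\tau$ this is an incompressible closed surface in $M$. That is exactly conclusion~(3) of Theorem~\ref{Thm:Nonmerid bridge}, and the non-Haken hypothesis rules it out immediately; there is nothing further to do. The paper dispatches this case in a single sentence (``Since $M$ is non-Haken, there is no essential surface in $M$ intersecting $L$ at most twice''), and so can you.

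One minor imprecision: for the small Seifert-fibered subcase of Part~(1) you should not literally invoke Part~(2), since there $g$ denotes the Heegaard genus of $M$; rather, run the identical argument with the Heegaard surface $\ob{S}$ of $M'$ having $g(\ob{S})\leq 2$, which yields $b(L)\leq 2\cdot 2+4=8$.
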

\begin{proof}
Since $M$ is non-Haken, there is no essential surface in $M$ intersecting $L$ at most twice transversally. Since $L$ is hyperbolic, there is no essential sphere, annulus, disc, or torus in $M(L)$.  If $M'$ is non-hyperbolic it contains an essential sphere, an essential torus, or it has Heegaard genus at most 2. Under the hypotheses of (1), the tunnel number of $L$ is at least 2, which implies that there is no Heegaard surface for the exterior of $L$ having genus 2.

If the tunnel number of $L$ is at most $g-1$, then there is a minimal genus Heegaard surface for $M$ which is also a Heegaard surface for $M(L)$. The knot $L$ can be isotoped to be in bridge position with respect to this surface so $b(L) = 1 \leq 2g + 4$. Thus, Conclusion (2) holds.

The theorem then follows from the proof of Theorem~\ref{Thm:Nonmerid bridge} except using the bound from Theorem~\ref{Bounding Bridge} in place of the inequalities from Theorems~\ref{Thm: Ess Surf} and~\ref{splitbound}. When applying Theorem ~\ref{Bounding Bridge}, note that $\boundary_0 S = \nil$, since we are in a closed manifold. Also, when $S$ is a thin or thick surface, arising from a Heegaard surface for $M'$ we can assume that $|\boundary_L S| \geq 2$, as in the proof of Theorem \ref{Thm:Nonmerid bridge}. \end{proof}

\section{Acknowledgements} The authors are grateful to the American Institute of Mathematics for its support through the SQuaREs program. The third author was also supported by NSF Grant DMS-1006369. The fifth author was supported by NSF Grant DMS-1054450. We are also grateful to a referee for numerous helpful comments and for suggesting Corollary \ref{Cor: S^3 surgeries}.

\begin{bibdiv}
\begin{biblist}
\bibselect{ExceptionalSurgeries}
\end{biblist}
\end{bibdiv}


\end{document}